\documentclass[pdftex,11pt]{amsart} 

\usepackage{array,amsfonts,amssymb,amsmath,mathtools,latexsym,upref,enumerate,cool,multicol}
\usepackage[margin=2.0cm]{geometry}
\usepackage{verbatim} 
\usepackage[english]{babel}
\usepackage[latin1]{inputenc}

\usepackage{mathtools}

\usepackage{color,tikz,pgf}
\usetikzlibrary{shapes, intersections, calc, patterns, positioning,
  matrix, arrows, fit, backgrounds, 
  decorations.pathmorphing, decorations.markings}

\newtheorem{lemma}[equation]{Lemma}
\newtheorem{prop}[equation]{Proposition}
\newtheorem{thm}[equation]{Theorem}
\newtheorem{cor}[equation]{Corollary}

\newtheorem{defn}[equation]{Definition}

\theoremstyle{definition}

\newtheorem{exmp}[equation]{Example}

\newtheorem{rmk}[equation]{Remark}

\newtheorem{notation}[equation]{Notation}

\numberwithin{equation}{section}

\newcommand{\Z}{\mathbf{Z}}

\newcommand{\Q}{\mathbf{Q}}
\newcommand{\C}{\mathbf{C}}

\newcommand{\ve}[1]{\varepsilon_{#1}}

\newcommand{\func}[3]{\mbox{$#1 \colon #2 \to #3$}}
\newcommand{\qt}[2]{#1 \backslash #2}
\newcommand{\dqt}[3]{#1 \backslash #2 / #3}

\newcommand{\Ob}[1]{\mathrm{Ob}(#1)}

\newcommand{\m}{morphism}

\newcommand{\we}{weighting}

\newcommand{\Euc}{Euler characteristic}

\newcommand{\act}[2]{#1 \backslash #2 // #2}

\newcommand{\mynote}[1]{\noindent{\textcolor{red}{\textbf{[#1]}}}}

\title{Groupoid $G$-spans and matrices over group rings}

\author{Joachim Kock}
\address{Departament de Matem\`atiques, Universitat Aut\`onoma de 
Barce\-lona, E--08193 Bellaterra, Spain}
\email{Joachim.Kock@uab.cat}

\author{Jesper M. M\o{}ller}
\address{Matematisk Institut, Universitetsparken 5, DK--2100 K\o{}benhavn, 
Denmark}
\email{moller@math.ku.dk}

\subjclass[2020]{18B10, 20L05} 
\keywords{Groupoid, span, homotopy fibre, Euler characteristic, matrix}

\usepackage[bookmarks=true,bookmarksopen=false]{hyperref}
\hypersetup{
   pdftitle = {},
   pdfauthor = {Jesper Michael MÃÂÃÂÃÂÃÂ¸ller},
   pdfpagemode = {UseOutlines},
   pdfstartview = {FitH},
   pdfborder = {0 0 0},
   backref = {true},
   colorlinks = {true},
   urlcolor = {blue},
   citecolor = {blue},
   linkcolor = {blue},
   pdftoolbar = {true},
}

\begin{document}

\date{\today}
\begin{abstract}
When $G$ is a finite abelian group, we define $G$-spans of groupoids
and their associated matrices with entries in the group ring $\Q G$ and show 
that composition of spans corresponds to multiplication of matrices.
\end{abstract}
\maketitle
\tableofcontents




\section{Introduction}
\label{sec:introduction}

For any span of finite {\em sets}, i.e.\ any diagram of finite sets of the form
\begin{center}
    \begin{tikzpicture}[double distance = 1.5pt]
       \matrix (dia) [matrix of math nodes, column sep=25pt, row
       sep=25pt]{
         M & T \\ S \\};
       \draw[->] (dia-1-1) -- node[above]{$R$}(dia-1-2);
        \draw[->] (dia-1-1) -- node[left]{$L$}(dia-2-1);
   \end{tikzpicture}
\end{center}
let  $[M] \colon S \times T \to \Z_{\geq 0}$ be the function
given by
\begin{equation*}
  [M](c,d) = | \dqt cMd |, \qquad c \in S,\, d \in T
\end{equation*}
where $| \dqt cMd|$ is the cardinality of the two-sided fibre 
$\dqt cMd = \{x \in M \mid c=Lx,\, Rx=d\}$. 
For example,
 \begin{equation*}
   [M] =
   \begin{pmatrix}
     3 & 0 & 2 \\ 2 & 4 & 4
   \end{pmatrix}
\end{equation*}
if $S=\{c_1,c_2,c_3\}$, $T=\{d_1,d_2\}$ and
 the set
$M$ looks like this
\begin{center}
        \begin{tikzpicture}
   [every circle node/.style={draw}, scale=0.9]
      \draw[step=2.0]  (0,0) grid (6,4);

      \node at (1,-0.5) {$c_1$};  \node at (3,-0.5) {$c_2$};
      \node  at (5,-0.5) {$c_3$};

     \node at (6.5,1.0) {$d_1$};  \node at (6.5,3.0) {$d_2$};

     \draw (0.5,1) node[circle] {$\phantom{g_1}$};
     \draw (1.5,1) node[circle] {$\phantom{g_1}$};

     \draw (0.5,3.3) node[circle] {$\phantom{g_1}$};
     \draw (1.5,3.3) node[circle] {$\phantom{g_2}$};
     \draw (1.0,2.5) node[circle] {$\phantom{g_2}$};
    
     \draw (2.5,1.5) node[circle] {$\phantom{g_1}$};
     \draw (3.5,1.5) node[circle] {$\phantom{g_1}$};
     \draw (2.5,0.5) node[circle] {$\phantom{g_2}$};
     \draw (3.5,0.5) node[circle] {$\phantom{g_1}$};

     \draw (4.5,3) node[circle] {$\phantom{g_1}$};
     \draw (5.5,3) node[circle] {$\phantom{g_2}$};

     \draw (4.5,1.5) node[circle] {$\phantom{g_3}$};
     \draw (5.5,1.5) node[circle] {$\phantom{g_1}$};
     \draw (4.5,0.5) node[circle] {$\phantom{g_2}$};
     \draw (5.5,0.5) node[circle] {$\phantom{g_2}$};
\end{tikzpicture}
  \end{center}
  when its elements, indicated by circles, are grouped into two-sided
  fibres.  
An appealing, and elementary, feature of this construction is that
span composition,
defined by pull-backs,
corresponds to matrix multiplication. However, 
only matrices with non-negative integral entries can be realized by set
spans. So the $(1 \times 1)$-matrices $(\frac{1}{2})$
and $(-1)$ are unrealizable by spans of sets.

Baez, Hoffnung, and Walker \cite[Theorem~34]{BHW2010}
went further and associated to
any span of finite {\em groupoids}
\begin{center}
    \begin{tikzpicture}[double distance = 1.5pt]
       \matrix (dia) [matrix of math nodes, column sep=25pt, row
       sep=25pt]{
         M & T \\ S \\};
       \draw[->] (dia-1-1) -- node[above]{$R$}(dia-1-2);
        \draw[->] (dia-1-1) -- node[left]{$L$}(dia-2-1);
      \end{tikzpicture} 
\end{center} 
the function $[M] \colon \pi_0(S) \times \pi_0(T) \to \Q_{\geq
  0}$ given by
\begin{equation*}
  [M](c,d) = |T(d,d))|^{-1} \chi(\dqt cMd), \qquad c \in \pi_0(S),\, d \in \pi_0(T)
\end{equation*}
where $T(d,d)$ is the auto\m\ group of $d$ in $T$ and $\chi(\dqt cMd)$
the \Euc, or groupoid cardinality, of the two-sided homotopy fibre of $M$ over $c$ and $d$. 
For example, if $\pi_0(S)=\{c_1,c_2,c_3\}$, $\pi_0(T)=\{d_1,d_2\}$, and
the two-sided homotopy fibres of $M$
and their connected subgroupoids, indicated by circles, look like 
\begin{center}
        \begin{tikzpicture}
   [every circle node/.style={draw}, scale=0.9]
      \draw[step=2.0]  (0,0) grid (6,4);

      \node at (1,-0.5) {$c_1$};  \node at (3,-0.5) {$c_2$};
      \node  at (5,-0.5) {$c_3$};

     \node at (6.5,1.0) {$d_1$};  \node at (6.5,3.0) {$d_2$};

     \draw (0.5,1) node[circle] {$\phantom{g_1}$};
     \draw (1.5,1) node[circle] {$\phantom{g_1}$};

     \draw (0.5,3.3) node[circle] {$\phantom{g_1}$};
     \draw (1.5,3.3) node[circle] {$\phantom{g_2}$};
     \draw (1.0,2.5) node[circle] {$\phantom{g_2}$};
    
     \draw (2.5,1.5) node[circle] {$\phantom{g_1}$};
     \draw (3.5,1.5) node[circle] {$\phantom{g_1}$};
     \draw (2.5,0.5) node[circle] {$\phantom{g_2}$};
     \draw (3.5,0.5) node[circle] {$\phantom{g_1}$};

     \draw (4.5,3) node[circle] {$\phantom{g_1}$};
     \draw (5.5,3) node[circle] {$\phantom{g_2}$};

     \draw (4.5,1.5) node[circle] {$\phantom{g_3}$};
     \draw (5.5,1.5) node[circle] {$\phantom{g_1}$};
     \draw (4.5,0.5) node[circle] {$\phantom{g_2}$};
     \draw (5.5,0.5) node[circle] {$\phantom{g_2}$};
\end{tikzpicture}
  \end{center}
  then $[M](c_2,d_1)$ is $|T(d_1,d_1)|^{-1}$ times a sum of \Euc s of
  four subgroupoids while $[M](c_2,d_2)=0$ as the two-sided homotopy
  fibre $\dqt{c_2}M{d_2}$ is empty. Again, composition of groupoid
  spans corresponds to matrix multiplication
  \cite[Theorem~34]{BHW2010} but now the entries of the matrices
  are non-negative rational numbers. The 
  $(1 \times 1)$-matrix $(\frac{1}{2})$ is now realizable  but 
  $(-1)$ is still not realizable.  

  The purpose of this note is to present a more general construction
  allowing for the realization of a wider class of matrices.  In our
  paper \cite{kock_moller2026} we introduced a setting where one can realize matrices
  with negative entries. In the present paper we go further and get to
  complex numbers. To get there, it is natural to first develop the
  theory with values in group rings.

  In this note, $G$ is a finite abelian group, $BG$ the groupoid
  consisting of a single object with auto\m\ group $G$, and $\Q_{\geq
    0}G$ the rational group semi-ring.
  The input
  data for our construction is a $G$-span of groupoids by which we
  understand a diagram of groupoids and groupoid \m s
\begin{center}
\begin{tikzpicture}
         \matrix (dia) [matrix of math nodes, column sep=80pt, row sep=40pt]{
      M & T\\
      S & BG \\};
    \draw[->] (dia-1-1) --  node[above]{$R$} (dia-1-2);
     \draw[->] (dia-1-1) --  node[left]{$L$} (dia-2-1);
      \draw[->] (dia-1-2) --  node(V)[right]{$V$} (dia-2-2);
      \draw[->] (dia-2-1) --  node[below]{$H$} (dia-2-2);
      \path (dia-2-1) -- node[midway]{$\varepsilon \colon HL \implies VR$}(dia-1-2); 
  \end{tikzpicture}
  \end{center}
with a natural transformation $\ve{} \colon \Ob{M} \to G$ from the functor $HL$ to the functor $VR$. 
The two-sided homotopy fibres of $M$ may now look like
\begin{center}
        \begin{tikzpicture}
   [every circle node/.style={draw}, scale=0.9]
      \draw[step=2.0]  (0,0) grid (6,4); 

      \node at (1,-0.5) {$c_1$};  \node at (3,-0.5) {$c_2$};
      \node  at (5,-0.5) {$c_3$};

     \node at (6.5,1.0) {$d_1$};  \node at (6.5,3.0) {$d_2$};

     \draw (0.5,1) node[circle] {$g_1$};
     \draw (1.5,1) node[circle] {$g_1$};

     \draw (0.5,3.3) node[circle] {$g_1$};
     \draw (1.5,3.3) node[circle] {$g_2$};
     \draw (1.0,2.5) node[circle] {$g_2$};
    
     \draw (2.5,1.5) node[circle] {$g_1$};
     \draw (3.5,1.5) node[circle] {$g_1$};
     \draw (2.5,0.5) node[circle] {$g_2$};
     \draw (3.5,0.5) node[circle] {$g_1$};

     \draw (4.5,3) node[circle] {$g_1$};
     \draw (5.5,3) node[circle] {$g_2$};

     \draw (4.5,1.5) node[circle] {$g_3$};
     \draw (5.5,1.5) node[circle] {$g_1$};
     \draw (4.5,0.5) node[circle] {$g_2$};
     \draw (5.5,0.5) node[circle] {$g_2$};
\end{tikzpicture}
  \end{center}
where the circles,  representing connected subgroupoids,  now carry
labels from the set $G$.
To any $G$-span as above, we associate the function
$[M,\ve{}] \colon \pi_0(S) \times \pi_0(T) \to \Q_{\geq 0}G$ given by
   \begin{equation*}
     [M,\ve{}](c,d) = |T(d,d)|^{-1} \sum_{g \in G} \chi( (\dqt cMd)\{ \dqt c{\ve{}}d = g \})g, \qquad c \in \pi_0(S), \, d \in \pi_0(T)
   \end{equation*}
   The non-negative rational coefficient of $g$ is $|T(d,d)|^{-1}$
   times the \Euc\ of the $g$-labeled subgroupoid
   $(\dqt cMd)\{ \dqt c{\ve{}}d = g \}$ of the two-sided homotopy fibre $\dqt cMd$. For example, if the
   two-sided homotopy fibres of $M$ look as
   above, only $g_1$ and $g_2$ have non-negative coefficients in
   $[M,\ve{}](c_2,d_1) \in \Q_{\geq 0} G$.

The composition of the function $[M,\ve{}] \colon \pi_0(S) \times \pi_0(T) \to
\Q G$ with a complex character $\rho
\colon \Q G \to \C$
is a function
\begin{equation*}
[[M,\ve{}]]  \colon \pi_0(S) \times \pi_0(T) \xrightarrow{[M,\ve{}]} \Q G
\xrightarrow{\rho} \C
\end{equation*}
with values in the complex numbers.  Both of these associated
matrices, $[M,\ve{}]$ and $[[M,\ve{}]]$, translate composition of
$G$-spans to matrix multiplication as stated in
Theorem~\ref{thm:main}.  The values of $[[M,\ve{}]]$ may be complex
numbers, not necessarily non-negative rational numbers.
For example, if $\sqrt[4]{1}=\{ \pm 1, \pm i\}$ is the cyclic group of
the complex $4$th roots of unity, then the $\sqrt[4]{1}$-span
   \begin{center}
    \begin{tikzpicture}[double distance = 1.5pt]
       \matrix (dia) [matrix of math nodes, column sep=35pt, row
       sep=25pt]{
         \{1\} & \{1\} \\ \{1\} & B\sqrt[4]{1} \\};
       \draw[->] (dia-1-1) -- (dia-1-2);
       \draw[->] (dia-1-1) --  (dia-2-1);
       \draw[->] (dia-2-1) --  (dia-2-2);
       \draw[->] (dia-1-2) --  (dia-2-2);
      \path (dia-2-1) -- node[midway]{$\ve{}(1) =z$} (dia-1-2);
   \end{tikzpicture}
\end{center}
realizes the $(1 \times 1)$-matrix $[[\{1\}, \ve{}]] = (z)$ for any $z
= \pm 1, \pm i$. (See Example~\ref{exmp:-1matrix} for more details.)
With our construction it is thus possible to realize the
$(1 \times 1)$-matrices $(-1)$ and $(i)$.

We now briefly summarize the contents of this paper. The following
Section~\ref{sec:homotopy-pull-backs} introduces some notation and
recapitulates the notion of homotopy pull-backs of
groupoids. Definition~\ref{defn:funddefn} in
Section~\ref{sec:spans-with-signs} contains the central
definitions of a $G$-span and its associated matrix over the group
semi-ring $\Q_{\geq 0}G$. The main result of this paper,
Theorem~\ref{thm:main}, is that composition of $G$-spans corresponds to
multiplication of the associated matrices. The formal structure of the
category of $G$-spans is outlined in
Section~\ref{sec:functorial-aspects}, and
Proposition~\ref{prop:restrictM} unveils a general property of
matrices associated to $G$-spans. We work out some concrete examples of
matrices generated by $G$-spans of groupoids in the final
Section~\ref{sec:examples}.

\section{Homotopy pull-backs of groupoids}
\label{sec:homotopy-pull-backs}

We recapitulate the theory of homotopy pull-backs of groupoids and
introduce some notation that will be used throughout this note.

\begin{defn}[The homotopy pull-back groupoid {\cite[\S 2]{BHW2010}}] \label{defn:htpypullback}
  Let $M_1 \xrightarrow{R} T \xleftarrow{L} M_2$ be a cospan of
  groupoids \cite[Definition~A.15]{BHW2010}. The homotopy pull-back groupoid, $M_1 \times_T M_2$, is the
  groupoid whose objects are all triples $(a_1,t,a_2)$ where $a_1 \in \Ob{M_1}$,  $a_2 \in \Ob{M_2}$, and $t \in
  T(Ra_1,La_2)$ is a \m\ in $T$ from $Ra_1$ to $La_2$. The \m s 
$(a_1,t,a_2) \to (b_1,u,b_2)$ in $M_1 \times_T M_2$
are all pairs $(m_1,m_2) \in
M_1(a_1,b_1) \times M_2(a_2,b_2)$ such that 
\begin{center}
    \begin{tikzpicture}[double distance = 1.5pt]
       \matrix (dia) [matrix of math nodes, column sep=45pt, row
       sep=35pt]{
         Ra_1 & La_2 \\ Rb_1 & Lb_2 \\};
       \draw[->] (dia-1-1) -- node[above]{$t$}(dia-1-2);
       \draw[->] (dia-1-1) -- node[left]{$R(m_1)$}(dia-2-1);
       \draw[->] (dia-1-2) -- node[right]{$L(m_2)$}(dia-2-2);
       \draw[->] (dia-2-1) -- node[below]{$u$}(dia-2-2);
   \end{tikzpicture}
\end{center}
commutes in $T$.
\end{defn}


The homotopy pull-back square
\begin{center}
      \begin{tikzpicture}[double distance = 1.5pt]
       \matrix (dia) [matrix of math nodes, column sep=45pt, row
       sep=40pt]{
         M_1 \times_T M_2 & M_2 \\ M_1 & T \\};
       \draw[->] (dia-1-1) -- node[above]{$p_2$}(dia-1-2);
       \draw[->] (dia-1-1) -- node[left]{$p_1$}(dia-2-1);
       \draw[->] (dia-1-2) -- node[right]{$L$}(dia-2-2);
       \draw[->] (dia-2-1) -- node[below]{$R$}(dia-2-2);
       \path (dia-2-1) --node[midway]{$Rp_1 \implies  Lp_2$} (dia-1-2);
   \end{tikzpicture}
\end{center}
commutes up to the natural transformation $Rp_1(a_1,t,a_2)=Ra_1 \xrightarrow{t} La_2=Lp_2 (a_1,t,a_2)$.

Alternatively, the homotopy pull-back,
\begin{equation}
  \label{eq:htpypullback01}
  M_1 \times_T M_2 = \int_{(a_1,a_2) \in \Ob{M_1^{\mathrm{op}}
    \times M_2}} T(Ra_1,La_2)
\end{equation}
is the category of elements on the  functor on
$M_1^{\mathrm{op}} \times M_2$ taking the object $(a_1,a_2)$ to the set $T(Ra_1,La_2)$.


\begin{rmk}[Two-sided homotopy pull-backs]\label{rmk:pbofpb}
 From  groupoid \m s $P \xrightarrow{R_1} S \xleftarrow{L} M
 \xrightarrow{R} T \xleftarrow{L_2} Q$ we can form the homotopy
 pull-backs 
\begin{center}
   \begin{tikzpicture}[double distance = 1.5pt]
       \matrix (dia) [matrix of math nodes, column sep=25pt, row sep=25pt]{
       (P \times_S M) \times_M (M \times_T Q) & M \times_T Q & Q  &&   (P \times_S M) \times_T Q & Q  \\
       P \times_S M & M & T &&  P \times_S M & T \\
       P & S \\};
     \draw[->] (dia-1-2) -- (dia-1-3);
     \draw[->] (dia-1-1) -- (dia-1-2);
     \draw[->] (dia-1-1) -- (dia-2-1);
     \draw[->] (dia-2-1) -- (dia-2-2);
     \draw[->] (dia-2-2) -- node[below]{$R$} (dia-2-3);
     \draw[->] (dia-3-1) -- node[above]{$R_1$}(dia-3-2);
     \draw[->] (dia-1-2) -- (dia-2-2);
     \draw[->] (dia-1-3) -- node[right]{$L_2$}(dia-2-3);
     \draw[->] (dia-2-1) -- (dia-3-1);
     \draw[->] (dia-2-2) -- node[right]{$L$}(dia-3-2);

    \draw[->] (dia-1-5) -- (dia-1-6);
    \draw[->] (dia-1-5) -- (dia-2-5);
    \draw[->] (dia-1-6) -- (dia-2-6);
    \draw[->] (dia-2-5) -- (dia-2-6);
    \draw[->] (dia-1-5) -- (dia-1-6);
\end{tikzpicture}
\end{center}
and there are groupoid equivalences
\begin{equation} \label{eq:USMTV}
 \begin{tikzpicture}[>=stealth', baseline=(current bounding box.-2)]
  \matrix (dia) [matrix of math nodes, column sep=25pt, row sep=20pt]{
    (P \times_S M) \times_M (M \times_T Q) &  (P \times_S M) \times_T Q  \\};
  \draw[->] ($(dia-1-1.east)+(0,0.1)$) -- ($(dia-1-2.west)+(0,0.1)$) 
  node[pos=.5,above] {${}$}; 
  \draw[->] ($(dia-1-2.west)-(0,0.1)$) -- ($(dia-1-1.east)-(0,0.1)$)
   node[pos=.5,below] {${}$} ; 
 \end{tikzpicture}
\end{equation}
since the homotopy pull-back of a homotopy pull-back is 
equivalent to
 a homotopy
pull-back along a composite \m . The objects of the two-sided homotopy pull-back
$P \times_S M \times_T
Q$ are $(x,s,a,t,y)$ where $(x,a,y) \in \Ob{P} \times \Ob{M} \times \Ob{Q}$ and $s \in
S(R_1x,La)$, $t \in T(Ra,L_2v)$. The \m s $(x_1,s_1,a_1,t_1,y_1) \to
(x_2,s_2,a_2,t_2,y_2)$ are triples $(u,m,v) \in P(x_1,x_2) \times
M(a_1,a_2) \times Q(y_1,y_2)$ such that the relevant diagrams in $S$
and $T$ commute. There is a groupoid span $P \leftarrow P \times_S M
\times_T Q \to Q$.
\end{rmk}

 \begin{notation}\label{defn:fullinvgroupoid}
Let $T$ be a groupoid, $A \subseteq \Ob{T}$ a set of objects of $T$,
$d \in \Ob{T}$ an object of
$T$, and $\ve{} \colon \Ob{T} \to Y$ a function on the set of objects
of $T$ with values in some set $Y$. 
\begin{enumerate}[(1)]
\item
$T\{A\}$  is the full subgroupoid of $T$ on the objects in $A$ and
$T\{\ve{} = y\} = T\{\ve{}^{-1}y\}$ for any $y \in Y$.
 \item $1\{d\}$ is the subgroupoid of $T$ with $d$ as its only object
   and the identity as its only \m .
\end{enumerate}
\end{notation}

If $c$ is an object of $S$, $d$ an object of $T$, and $S \xleftarrow{L} M
\xrightarrow{R} T$ a groupoid span, the left, right, and two-sided homotopy fibres
\begin{align*}
  &\qt{c}{M} = 1\{c\} \times_S M  = \int_{a \in \Ob{M}} S(c,La) \qquad
  M/d = M \times_T 1\{d\} = \int_{a \in \Ob{M}} T(Ra,d) \\
  &\dqt{c}{M}{d} = 1\{c\} \times_S M \times_T 1\{d\}   =\qt cM
    \times_M M/d  =  \int_{a \in \Ob{M}}
   S(c,La) \times T(Ra,d)
\end{align*}
sit in a diagram of homotopy pull-back squares
\begin{center}
   \begin{tikzpicture}[double distance = 1.5pt]
       \matrix (dia) [matrix of math nodes, column sep=25pt, row
       sep=25pt]{
      \dqt{c}{M}{d} & M/d & 1\{d\} \\
      \qt{c}{M}  & M & T \\
      1\{c\} & S \\};
    \draw[->] (dia-1-1) -- (dia-1-2);
    \draw[->] (dia-1-2) -- (dia-1-3);
    \draw[->] (dia-1-2) -- (dia-2-2);
    \draw[->] (dia-1-1) -- (dia-2-1);
    \draw[->] (dia-2-1) -- (dia-2-2);
    \draw[->] (dia-2-1) -- (dia-3-1);
    \draw[->] (dia-1-3) -- (dia-2-3);
    \draw[->] (dia-2-2) -- node[below]{$R$}(dia-2-3);
    \draw[->] (dia-2-2) --node[right]{$L$} (dia-3-2);
    \draw[->] (dia-3-1) -- (dia-3-2);
  \end{tikzpicture} 
\end{center} 

The \Euc, or groupoid
cardinality, of a finite groupoid $S$
is $\chi(S) = \sum_{c \in \pi_0(S)} |S(c,c)|^{-1}$
\cite[Definition~2.2, Example~2.7]{leinster08}, \cite[Definition~2.4]{BHW2010}.

\begin{lemma} \label{lemma:chiM1TM2}
\cite[Lemma~3.8]{imma_kock_tonks2018} 
  The homotopy pull-back groupoid of Definition~\ref{defn:htpypullback}
  has \Euc\
  \begin{equation*}
   \chi( M_1 \times_T M_2) = \sum_{d \in \pi_0(T)} \chi(M_1/d )  \chi(T\{d\}) \chi(\qt d{M_2})
  \end{equation*}
\end{lemma}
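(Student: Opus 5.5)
The plan is to compute $\chi(M_1\times_T M_2)$ by splitting the objects according to the component of $T$ through which the connecting morphism passes, and then to count each piece using the orbit--stabilizer principle for groupoid cardinality. Two standard facts will be used. First, $\chi$ is additive over disjoint unions of full subgroupoids that are unions of components. Second, for a finite group $\Gamma$ acting on a finite set $X$, the action groupoid $X/\!/\Gamma$ has $\chi = |X|/|\Gamma|$; equivalently, the category of elements $\int_{A} F$ of a functor $F$ on a finite groupoid $A$ satisfies $\chi(\int_A F)=\sum_{a\in\pi_0(A)}|F(a)|/|A(a,a)|$.

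\textbf{Step 1 (decomposition).} An object $(a_1,t,a_2)$ of $M_1\times_T M_2$ has $Ra_1$ and $La_2$ in the same component $d\in\pi_0(T)$, since $t$ connects them. Morphisms preserve this component, so
\begin{equation*}
M_1\times_T M_2=\coprod_{d\in\pi_0(T)} M_1\{R\in d\}\times_{T\{d\}} M_2\{L\in d\},
\end{equation*}
and by additivity it suffices to treat a single $d$. Here $T\{d\}$ denotes the component of $d$, which is equivalent to $B\,T(d,d)$.

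\textbf{Step 2 (counting one component).} Apply the category-of-elements formula \eqref{eq:htpypullback01} with $A=M_1^{\mathrm{op}}\times M_2$ restricted to these components. This gives
\begin{equation*}
\chi=\sum_{(a_1,a_2)}\frac{|T(Ra_1,La_2)|}{|M_1(a_1,a_1)|\,|M_2(a_2,a_2)|},
\end{equation*}
where the sum runs over pairs of isoclasses with $Ra_1,La_2\in d$, and $|T(Ra_1,La_2)|=|T(d,d)|$. Applying the same formula to $M_1/d=\int_{a_1}T(Ra_1,d)$ and to $\qt d{M_2}=\int_{a_2}T(d,La_2)$ gives
\begin{equation*}
\chi(M_1/d)=\sum_{a_1}\frac{|T(d,d)|}{|M_1(a_1,a_1)|},\qquad
\chi(\qt d{M_2})=\sum_{a_2}\frac{|T(d,d)|}{|M_2(a_2,a_2)|}.
\end{equation*}
Multiplying these two and then by $\chi(T\{d\})=|T(d,d)|^{-1}$ gives exactly the summand above. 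Summing over $d$ finishes the proof.

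\textbf{Main obstacle.} The only nontrivial point is justifying the category-of-elements formula $\chi(\int_A F)=\sum_{a\in\pi_0(A)}|F(a)|/|A(a,a)|$. I would prove it by restricting to a skeleton of $A$, which does not change $\chi$ because it is an equivalence of the category of elements. On a skeleton, each connected component of $\int_A F$ over $a$ is an orbit of $A(a,a)$ on $F(a)$, with automorphism group the stabilizer. The orbit--stabilizer theorem then gives $\sum_{\text{orbits}}1/|\mathrm{Stab}|=|F(a)|/|A(a,a)|$. The only subtlety to watch is the op on $M_1$, which does not affect automorphism group orders.
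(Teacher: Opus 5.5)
Your proof is correct and follows essentially the paper's route. You write $M_1\times_T M_2$ as the Grothendieck construction \eqref{eq:htpypullback01}, split by components $d\in\pi_0(T)$, and factor using $|T(Ra_1,La_2)|\,|T(d,d)|=|T(Ra_1,d)|\,|T(d,La_2)|$ before recognizing $\chi(M_1/d)$ and $\chi(\qt d{M_2})$ via the same formula. The differences are cosmetic: the paper sums over all objects against a coweighting on $M_1$ and a weighting on $M_2$, citing Leinster's Proposition~2.8, whereas you sum over isoclasses with automorphism-group orders and prove that formula yourself via a skeleton and orbit--stabilizer. Also, the paper's $T\{d\}$ is the one-object full subgroupoid rather than the component of $d$, but both have Euler characteristic $|T(d,d)|^{-1}$, so this does not affect your argument.
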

\begin{proof}
Let $k^1_\bullet$ be a co\we\ on $M_1$ and $k^\bullet_2$ a \we\ on $M_2$. 
The \Euc\ of the Grothendieck construction $M_1 \times_T M_2 = \int_{M_1^{\mathrm{op}} \times M_2} T(Ra_1,La_2)$ is
\cite[Proposition~2.8]{leinster08} 
\begin{align*}
  \chi(M_1 \times_T M_2) &= \sum_{a_1,a_2} k^1_{a_1} k^{a_2}_2 |T(Ra_1,La_2)| =
  \sum_{d \in \pi_0(T)} \sum_{\substack{a_1 \in \Ob{M_1},a_2 \in \Ob{M_2} \\ T(Ra_1,d) \neq \emptyset, T(d,La_2) \neq \emptyset}} 
  k^1_{a_1} k^{a_2}_2 |T(Ra_1,La_2)| \\ &=
 \sum_{d \in \pi_0(T)} |T(d,d)|^{-1}  \sum_{\substack{a_1 \in \Ob{M_1},a_2 \in \Ob{M_2} \\ T(Ra_1,d) \neq \emptyset, T(d,La_2) \neq \emptyset}} 
  k^1_{a_1} k^{a_2}_2 |T(Ra_1,d)||T(d,La_2)| \\ &=
  \sum_{d \in \pi_0(T)} \chi(T\{d\}) \sum_{a_1 \in \Ob{M_1}, a_2 \in \Ob{M_2}} 
    k^1_{a_1} k^{a_2}_2 |T(Ra_1,d)||T(d,La_2)|\\ &=
\sum_{d \in \pi_0(T)} \chi(T\{d\}) \big(\sum_{a_1} k_{a_1} |T(Ra_1,d)| \big) \big(\sum_{a_2} k^{a_2} |T(d,La_2)| \big)
  \\ &=\sum_{d \in \pi_0(T)}  \chi(M_1/d) \chi(T\{d\}) \chi(\qt{d}{M_2}) 
\end{align*}
as claimed. For the second equality we used that $|T(Ra_1,La_2)|$ is nonzero only if there is some object $d$ of $T$ such that $d$, $Ra_1$, $La_2$ lie in the same component of $T$. 
\end{proof}

\begin{rmk}[The homotopy fibre versus the full inverse image {\cite[Definition~2.8]{BHW2010}}]
  With reference to $L \colon M \to S$, the full inverse image of an
  object $c$ in $S$ is the groupoid $L^{-1}(c) = M\{ a \in \Ob{M} \mid
  S(c,La) \neq \emptyset\}$. The \Euc\
  of the left homotopy fibre $\qt cM = \qt c{L^{-1}(c)} = \int_{a \in L^{-1}(c)} S(c,La)$ is
  $\chi(\qt cM) = |S(c,c)| \chi(L^{-1}(c))$
  \cite[Proposition~2.8]{leinster08}. We can rewrite this as
$\chi(L^{-1}(c)) =
  \chi(S\{c\}) \chi(\qt cM)$ (and it follows that $\chi(M) =
  \sum_{\pi_0(S)} \chi(S\{c\})\chi(\qt cM)$). The full inverse image
  will not be used in this paper, where we prefer homotopy fibres
  instead, but this remark explains how to switch between the two concepts.
\end{rmk}

\section{$G$-spans of groupoids and their matrices}
\label{sec:spans-with-signs}

In this section we introduce $G$-spans of
groupoids where $G$ is a finite abelian group. To every $G$-span we associate a matrix with coefficients
in the group semi-ring $\Q_{\geq 0}G$
and show that composition
of $G$-spans translates to matrix multiplication
(Definition~\ref{defn:funddefn}, Theorem~\ref{thm:main}). 

$BG$ is the groupoid with $\bullet$ as its only object with auto\m\ group
$BG(\bullet,\bullet)=G$.

\begin{defn} \label{defn:funddefn}
Let $S$, $T$ and $M$ be finite groupoids.
\begin{enumerate}[(1)]

  \item  \label{funddefn:4}
 A $G$-span from $S$ to $T$ with apex $M$ is a diagram of groupoids and functors
 \begin{center} 
  \begin{tikzpicture}
         \matrix (dia) [matrix of math nodes, column sep=80pt, row sep=40pt]{
      M & T\\
      S & BG \\};
    \draw[->] (dia-1-1) --  node[above]{$R$} (dia-1-2);
     \draw[->] (dia-1-1) --  node[left]{$L$} (dia-2-1);
      \draw[->] (dia-1-2) --  node(V)[right]{$V$} (dia-2-2);
      \draw[->] (dia-2-1) --  node[below]{$H$} (dia-2-2);
      \path (dia-2-1) -- node[midway]{$HL \stackrel{\ve{}}{\implies} VR$}(dia-1-2); 
  \end{tikzpicture}
  \end{center}
  and a function $\varepsilon \colon \Ob{M} \to G$ such that
  $\varepsilon(a_2)   (HL)(m) = (VR)(m) \varepsilon(a_1)$ in $G$
for all
  $a_1,a_2 \in \Ob{M}$ and $m \in M(a_1,a_2)$:
  \begin{center}
  \begin{tikzpicture}[double distance = 1.5pt]
       \matrix (dia) [matrix of math nodes, column sep=50pt, row
       sep=40pt]{HLa_1 & VRa_1 \\
HLa_2 & VRa_2 \\};
\draw[->] (dia-1-1) --node[left]{$(HL)(m)$} (dia-2-1);
\draw[->] (dia-1-2) --node[right]{$(VR)(m)$} (dia-2-2);
\draw[->] (dia-1-1) --node[above]{$\ve{}(a_1)$} (dia-1-2);
\draw[->] (dia-2-1) --node[below]{$\ve{}(a_2)$} (dia-2-2);
\end{tikzpicture}
\end{center}

\item \label{funddefn:1b} In the situation of \eqref{funddefn:4}, let
  $c \in \Ob{S}$ and $d \in \Ob{T}$. The homotopy fibre over $c$ is
  the $G$-span with apex  $\qt cM = 1\{c\} \times_S M$  
\begin{center}
  \begin{tikzpicture}[double distance = 1.5pt]
       \matrix (dia) [matrix of math nodes, column sep=75pt, row
       sep=40pt]{
      \qt cM & T \\
      1\{c\} & BG \\};
   \draw[->] (dia-1-1) -- node[above]{$\dot R = Rp_2$} (dia-1-2);
   \draw[->] (dia-1-1) -- (dia-2-1);
     \draw[->] (dia-2-1) -- (dia-2-2);
   \draw[->] (dia-1-2) -- node[right]{$V$}(dia-2-2);
   \path (dia-2-1) -- node[midway]{$ e \stackrel{\qt{c}{\ve{}}}{\implies}
     V\dot R$} (dia-1-2);
\end{tikzpicture}
\end{center}
from $1\{c\}$ to $T$, 
and the homotopy fibre over $d$ is the $G$-span with apex $M/d = M
\times_T 1\{d\}$ 
\begin{center}
  \begin{tikzpicture}[double distance = 1.5pt]
       \matrix (dia) [matrix of math nodes, column sep=75pt, row
       sep=40pt]{
      M/d & 1\{d\} \\
      S & BG \\};
   \draw[->] (dia-1-1) --  (dia-1-2);
   \draw[->] (dia-1-1) -- node[left]{$\dot L = Lp_1$} (dia-2-1);
     \draw[->] (dia-2-1) -- node[below]{$H$} (dia-2-2);
   \draw[->] (dia-1-2) -- (dia-2-2);
   \path (dia-2-1) -- node[pos=0.6]{$H \dot L \stackrel{\ve{}/d}{\implies} e$} (dia-1-2);
\end{tikzpicture}
\end{center}
from $S$ to $1\{d\}$ in the diagram 
 \begin{center}
    \begin{tikzpicture}
         \matrix (dia) [matrix of math nodes, column sep=60pt, row
         sep=40pt]{
     {} & M \times_T 1\{d\} & 1\{d\} \\
      1\{c\} \times_S M & M & T \\ 1\{c\} & S & BG \\};
\draw[->] (dia-2-1) -- node[above]{$p_2$}(dia-2-2);
\draw[->] (dia-2-2) -- node[above]{$R$}(dia-2-3);
\draw[->] (dia-3-1) -- (dia-3-2);
\draw[->] (dia-3-2) -- node[below]{$H$}(dia-3-3);
\draw[->] (dia-2-1) -- node[left]{$p_1$}(dia-3-1);
\draw[->] (dia-2-2) -- node[left]{$L$}(dia-3-2);
\draw[->] (dia-2-3) -- node[right]{$V$}(dia-3-3);
\draw[->](dia-1-2) -- node[above]{$p_2$}(dia-1-3);
\draw[->](dia-1-2) -- node[left]{$p_1$}(dia-2-2);
\draw[->](dia-1-3) -- (dia-2-3);
\path (dia-3-2) -- node[midway]{$HL \stackrel{\ve{}}{\implies}
  VR$}(dia-2-3);
\path (dia-2-2) -- node[midway]{$Rp_1 \stackrel{t}{\implies} p_2$}
(dia-1-3);
\path (dia-3-1) -- node[midway]{$p_1 \stackrel{s}{\implies} Lp_2$} (dia-2-2);
     \end{tikzpicture}
  \end{center}
The objects of $\qt cM$ are pairs $(s,a)$ where $a \in \Ob{M}$ and $s \in S(c,La)$.
The function
$
  \qt c\varepsilon  \colon \Ob{\qt cM}  \to G
$
given by $(\qt c\varepsilon)(s,a)  = \varepsilon(a) H(s)$ makes the diagram
\begin{center}
  \begin{tikzpicture}
           \matrix (dia) [matrix of math nodes, column sep=40pt, row
           sep=30pt]{
Hc & HL(a_1) & VR(a_1) \\
Hc & HL(a_2) & VR(a_2) \\};
\draw[->](dia-1-3) --node[right]{$VR(m)$} (dia-2-3);
\draw[->](dia-1-2) -- node[right]{$HL(m)$}(dia-2-2);
\draw[double](dia-1-1) -- node[left]{$e$}(dia-2-1);
\draw[->](dia-1-2) -- node[above]{$\varepsilon(a_1)$}(dia-1-3);
\draw[->](dia-1-1) -- node[above]{$H(s_1)$}(dia-1-2);
\draw[->](dia-2-2) -- node[below]{$\varepsilon(a_2)$}(dia-2-3);
\draw[->](dia-2-1) -- node[below]{$H(s_2)$}(dia-2-2);
\draw[->] (dia-1-1)  |-
($(dia-1-3.north)+(0,0.6)$) node[pos=0.75,above]{$(\qt
  c{\ve{}})(s_1,a_1)$}-- (dia-1-3);
\draw[->] (dia-2-1) |-  node[pos=0.75,below]{$(\qt c{\ve{}})(s_2,a_2)$}  ($(dia-2-3.south)+(0,-0.6)$) --
(dia-2-3); 
\end{tikzpicture}
\end{center}
commute for any \m\ $m \colon (s_1,a_1) \to (s_2,a_2)$ in $\qt cM$.

The objects of $M/d$ are pairs $(a,t)$ where $a \in \Ob{M}$ and $t \in T(Ra,d)$. The function
$
  \varepsilon/d  \colon \Ob{M/d}  \to G
$
given by $(\varepsilon/d)(a,t) = V(t)\varepsilon(a)$ makes the diagram
\begin{center}   
  \begin{tikzpicture}
           \matrix (dia) [matrix of math nodes, column sep=40pt, row sep=30pt]{
HL(a_1) & VR(a_1) & Vd \\
HL)a_2) & VR(a_2) & Vd \\};
\draw[double](dia-1-3) -- node[right]{$e$}(dia-2-3);
\draw[->](dia-1-2) -- node[above]{$V(t_1)$}(dia-1-3);
\draw[->](dia-2-2) -- node[below]{$V(t_2)$}(dia-2-3);
\draw[->](dia-1-2) --node[left]{$VR(m)$} (dia-2-2);
\draw[->](dia-1-1) -- node[left]{$HL(m)$}(dia-2-1);
\draw[->](dia-1-1) -- node[above]{$\varepsilon(a_1)$}(dia-1-2);
\draw[->](dia-2-1) -- node[below]{$\varepsilon(a_2)$}(dia-2-2);
\draw[->](dia-1-1) |- node[pos=0.75,above]{$(\ve{}/d)(a_1,t_1)$} ($(dia-1-3.north)+(0,0.6)$) -- (dia-1-3);
\draw[->](dia-2-1) |- node[pos=0.75,below]{$(\ve{}/d)(a_2,t_2)$}
 ($(dia-2-3.south)+(0,-0.6)$) -- (dia-2-3);
\end{tikzpicture}
\end{center}
commute for any \m\ $m \colon (a_1,t_1) \to (a_2,t_2)$ in $M/d$.

\item \label{funddefn:2} In the situation of \eqref{funddefn:4}, let
  $c$ be an object of $S$ and $d$ an object of $T$. The two-sided homotopy
  fibre over $(c,d)$ is the 
$G$-span with apex $\qt{c}{M/d} = 1\{c\} \times _S M \times_T 1\{d\}$ from $1\{c\}$ to $1\{d\}$ 
  \begin{center}
    \begin{tikzpicture}
           \matrix (dia) [matrix of math nodes, column sep=65pt, row sep=30pt]{
       \dqt cMd  & 1\{d\} \\ 1\{c\} & BG \\};
      \draw[->] (dia-1-1) -- (dia-1-2);
      \draw[->] (dia-2-1) -- (dia-2-2);
       \draw[->] (dia-1-1) -- (dia-2-1);
       \draw[->] (dia-1-2) -- (dia-2-2);
      \path (dia-2-1) --node[midway]{$ Hc
        \stackrel{\qt c{\ve{}}/d}{\implies} Vd$} (dia-1-2);
    \end{tikzpicture}
  \end{center}
formed by the outer edge of the square 
\begin{center}
  \begin{tikzpicture}
           \matrix (dia) [matrix of math nodes, column sep=45pt, row sep=30pt]{
         \qt{c}{M}/d  & M/d  & 1\{d\} \\
        \qt cM & M & T \\
        1\{c\} & S & BG \\};
   \draw[->] (dia-1-1) -- (dia-1-2);
   \draw[->] (dia-1-2) -- (dia-1-3);
   \draw[->] (dia-2-1) -- (dia-2-2);
   \draw[->] (dia-2-2) --node[above]{$R$} (dia-2-3);
   \draw[->] (dia-3-1) -- (dia-3-2);
   \draw[->] (dia-3-2) --node[below]{$H$} (dia-3-3);

   \draw[->](dia-1-1) -- (dia-2-1);
   \draw[->](dia-1-2) -- (dia-2-2);
   \draw[->](dia-1-3) -- (dia-2-3);

   \draw[->](dia-2-1) -- (dia-3-1);
   \draw[->](dia-2-2) -- node[left]{$L$}(dia-3-2);
   \draw[->](dia-2-3) --node[right]{$V$} (dia-3-3);

   \path (dia-3-2) -- node[midway]{$HL \stackrel{\ve{}}{\implies} VR $}(dia-2-3);
   \end{tikzpicture}
\end{center}
The objects of $\qt cM/d$ are all triples $(s,a,t)$ where $a$ is an
object of $M$ and $s \in S(c,La)$, $t \in T(Ra,d)$.  The function
$ \dqt c{\ve{}}d \colon \mathrm{Ob}( \qt cM/d) \to G $ is
$(\qt c{\varepsilon}/d)(s,a,t)=V(t) \ve{}(a) H(s)$. For any \m\
$m \colon (s_1,a_1,t_1) \to (s_2,a_2,t_2)$ in
$\dqt cMd$, the diagram
\begin{center}
  \begin{tikzpicture}
           \matrix (dia) [matrix of math nodes, column sep=40pt, row sep=30pt]{
Hc & HL(a_1) & VR(a_1) & Vd \\
Hc & HL(a_2) & VR(a_2) & Vd \\};
\draw[double](dia-1-1) -- node[left]{$e$}(dia-2-1);
\draw[->] (dia-1-1) --node[above]{$H(s_1)$} (dia-1-2);
\draw[->] (dia-2-1) --node[below]{$H(s_2)$} (dia-2-2);
\draw[double](dia-1-4) -- node[right]{$e$}(dia-2-4);
\draw[->](dia-1-3) -- node[above]{$V(t_1)$}(dia-1-4);
\draw[->](dia-2-3) -- node[below]{$V(t_2)$}(dia-2-4);
\draw[->](dia-1-3) --node[left]{$VR(m)$} (dia-2-3);
\draw[->](dia-1-2) -- node[left]{$HL(m)$}(dia-2-2);
\draw[->](dia-1-2) -- node[above]{$\varepsilon(a_1)$}(dia-1-3);
\draw[->](dia-2-2) -- node[below]{$\varepsilon(a_2)$}(dia-2-3);
\draw[->](dia-1-1) |- node[pos=0.75,above]{$(\dqt{c}{\ve{}}{d})(s_1,a_1,t_1)$} ($(dia-1-4.north)+(0,0.6)$) -- (dia-1-4);
\draw[->](dia-2-1) |- node[pos=0.75,below]{$(\dqt{c}{\ve{}}{d})(s_2,a_2,t_2)$}
 ($(dia-2-4.south)+(0,-0.6)$) -- (dia-2-4);
\end{tikzpicture}
\end{center}
commutes in $BG$.

\item \label{funddefn:6} 
The  matrix $[M,\ve{}]  \colon \pi_0(S) \times \pi_0(T) \to \Q G$
of  the $G$-span in  Definition~\ref{defn:funddefn}.(\ref{funddefn:4}) is
 \begin{equation*}
 [M,\varepsilon](c,d) =  
 \sum_{g \in G} \chi((\qt cM/d)\{\qt c\varepsilon / d = g\}) \chi(T\{d\}) g
\qquad  c \in \pi_0(S), d \in \pi_0(T)
\end{equation*}

\item  \label{funddefn:4a}
 Composition of a $G$-span from $S_1$ to $T$ with a $G$-span from $T$ to $S_2$ 
  \begin{center}
  \begin{tikzpicture}
      \matrix (dia) [matrix of math nodes, column sep=80pt, row sep=40pt]{
     M_1 & T && M_2 & S_2 \\
     S_1 & BG && T & BG \\};
\draw[->](dia-1-1) -- node[above]{$R_1$} (dia-1-2);
\draw[->](dia-1-1) -- node[left]{$L_1$} (dia-2-1);
\draw[->](dia-1-2) -- node[right]{$V_1$} (dia-2-2);
\draw[->](dia-2-1) -- node[below]{$H_1$} (dia-2-2);
\draw[->](dia-1-4) -- node[above]{$R_2$} (dia-1-5);
\draw[->](dia-1-4) -- node[left]{$L_2$} (dia-2-4);
\draw[->](dia-1-5) -- node[right]{$V_2$} (dia-2-5);
\draw[->](dia-2-4) -- node[below]{$H_2$} (dia-2-5);
\path (dia-2-1) -- node[midway]{$   H_1L_1
  \stackrel{\ve 1}{\implies} V_1R_1$}(dia-1-2);
\path (dia-2-4) -- node[midway]{$ H_2L_2
  \stackrel{\ve 2}{\implies} V_2R_2 $}(dia-1-5);
\end{tikzpicture}
\end{center}
is defined, provided that $V_1=H_2 \colon T \to BG$, as the $G$-span from $S_1$ to $S_2$
 \begin{center}
   \begin{tikzpicture}
       \matrix (dia) [matrix of math nodes, column sep=110pt, row sep=50pt]{
       M_1 \times_T M_2 & S_2 \\
       S_1 & BG \\};
       \draw[->] (dia-1-1) -- node[above]{$\dot R_2 = R_2p_2$}(dia-1-2);
       \draw[->] (dia-2-1) -- node[below]{$H_1$}(dia-2-2);
       \draw[->] (dia-1-1) -- node[left, xshift=-2pt]{$\dot L_1 = L_1p_1$}(dia-2-1);
       \draw[->] (dia-1-2) -- node[right]{$V_2$}(dia-2-2);
       \path (dia-2-1) -- node[midway]{$H_1 \dot L_1 \stackrel{\ve 1 \times_T \ve 2}{\implies} V_2 \dot R_2$} (dia-1-2);
     \end{tikzpicture}  
 \end{center}
 at the outer edge of the homotopy commutative diagram
\begin{center}
  \begin{tikzpicture}[double distance = 1.5pt]
       \matrix (dia) [matrix of math nodes, column sep=50pt, row sep=40pt]{
 M_1 \times_T M_2 & M_2 & S_2 \\
M_1 & T & BG \\
S_1 & BG & BG\\};
\draw[->] (dia-1-1) -- node[above]{$p_2$}(dia-1-2);
\draw[->] (dia-1-2) -- node[above]{$R_2$}(dia-1-3);
\draw[->] (dia-1-1) -- node[left]{$p_1$}(dia-2-1);
\draw[->] (dia-1-2) -- node[right]{$L_2$}(dia-2-2);
\draw[->] (dia-1-3) -- node(V2)[right]{$V_2$}(dia-2-3);
\draw[->] (dia-2-1) -- node[above]{$R_1$}(dia-2-2);
\draw[->] (dia-2-2) -- node[above]{$H_2$}(dia-2-3);
\draw[->] (dia-2-1) -- node[left]{$L_1$}(dia-3-1);
\draw[->] (dia-2-2) -- node[right]{$V_1$}(dia-3-2);
\draw[->] (dia-3-1) -- node[below]{$H_1$}(dia-3-2);
\path (dia-3-1) -- node[midway]{$\varepsilon_1$} (dia-2-2);
\path (dia-2-2) -- node[midway]{$\varepsilon_2$} (dia-1-3);
\path (dia-2-1) -- node[midway]{$t$} (dia-1-2);
\draw [double](dia-3-2)--(dia-3-3);
\draw[double](dia-2-3) -- (dia-3-3);
  \end{tikzpicture}
\end{center}
 where 
$\varepsilon_1 \times_T \varepsilon_2 \colon \Ob{M_1 \times_T M_2} \to G$ is the map $(\varepsilon_1 \times_T \varepsilon_2)(a_1,t,a_2) =  \varepsilon_2(a_2)  V_1(t) \varepsilon_1(a_1)$. 

For any \m\  $(m_1,m_2) \colon (a_1,t,a_2,) \to (b_1,u,b_2)$ in $M_1 \times_T M_2$,
$uR_1(m_1)=L_2(m_2)t$, as expressed by
the commutative diagram in $T$
\begin{center}
  \begin{tikzpicture}
     \matrix (dia) [matrix of math nodes, column sep=50pt, row
     sep=40pt]{
     R_1(a_1) & L_2(a_2) \\
     R_1(b_1) & L_2(b_2) \\};
 \draw[->](dia-1-1) -- node[above]{$t$} (dia-1-2);
 \draw[->](dia-1-1) -- node[left]{$R_1(m_1)$} (dia-2-1);
 \draw[->](dia-2-1) -- node[below]{$u$} (dia-2-2);
 \draw[->](dia-1-2) -- node[right]{$L_2(m_2)$} (dia-2-2);
  \end{tikzpicture}
\end{center}
so that  
\begin{equation} \label{eq:VLalpha}
(V_1L_2)(m_2) V_1(t) = V_1(u) (V_1R_1)(m_1) \quad
(H_2L_2)(m_2) H_2(t) = H_2(u) (H_2R_1)(m_1)
\end{equation}
in $G$. 
The identity
\begin{equation*}
   (\ve{1} \times_T \ve{2})(b_1,u,b_2) (V_2\dot R_2)(m_1,m_2) = (H_1
   \dot L_1)(m_1,m_2)  (\ve{1} \times_T \ve{2}) (a_1,t,a_2)
\end{equation*}
holds because
\begin{center}
  \begin{tikzpicture}[double distance = 1.5pt]
       \matrix (dia) [matrix of math nodes, column sep=45pt, row
       sep=40pt]{
H_1L_1(a_1) & V_1R_1(a_1) & V_1L_2(a_2) & H_2L_2(a_2) & V_2R_2(a_2) \\
H_1L_1(b_1) & V_1R_1(b_1) & V_1L_2(b_2) & H_2L_2(b_2) & V_2R_2(b_2)
\\};
\draw[->] (dia-1-1) -- node[above]{$\ve 1(a_1)$} (dia-1-2);
\draw[->] (dia-1-2) -- node[above]{$V_1t)$} (dia-1-3);
\draw[double] (dia-1-3) -- (dia-1-4);
\draw[->] (dia-1-4) -- node[above]{$\ve 2(a_2)$}(dia-1-5);
\draw[->] (dia-2-1) -- node[below]{$\ve 1(b_1)$} (dia-2-2);
\draw[->] (dia-2-2) -- node[below]{$V_1(u)$} (dia-2-3);
\draw[double] (dia-2-3) -- (dia-2-4);
\draw[->] (dia-2-4) -- node[below]{$\ve 2(b_2)$}(dia-2-5);
\draw[->] (dia-1-1) -- node[right]{$H_1L_1m_1$}(dia-2-1);
\draw[->] (dia-1-2) -- node[left]{$V_1R_1m_1$}  (dia-2-2);
\draw[->] (dia-1-3) -- node[left]{$V_1L_2m_2$}  (dia-2-3);
\draw[->] (dia-1-4) -- node[right]{$H_2L_2m_2$}  (dia-2-4);
\draw[->] (dia-1-5) -- node[left]{$V_2R_2m_2$}  (dia-2-5);
\draw[->] (dia-1-1) |- node[pos=0.75,above]{$(\ve 1 \times_T \ve 2)(a_1,t,a_2)$} ($(dia-1-5.north)+(0,0.6)$) -- (dia-1-5); 
\draw[->] (dia-2-1) |- node[pos=0.75,below]{$(\ve 1 \times_T \ve 2)(b_1,u,b_2)$} ($(dia-2-5.south)+(0,-0.6)$) -- (dia-2-5); 
\end{tikzpicture}
\end{center}
commutes in the groupoid $BG$.
\end{enumerate}
\end{defn}

There are functors $\qt cM \to \qt{\bullet}{BG}$,
$M/d \to BG/\bullet$, and $\dqt cMd \to G$ given by
\begin{description}
\item[$\qt cM \to \qt {\bullet}{BG}$] $(\qt c{\ve{}})(s_1,a_1) \xrightarrow{VR(m)} (\qt
  c{\ve{}})(s_2,a_2)$ for $m \in (\qt cM)((s_1,a_1),(s_2,a_2))$
\item[$M/d \to BG/\bullet$] $(\ve{}/d)(a_1,t_1)) \xrightarrow{HL(m)} (\ve{}/d)(a_2,t_2)$ for
  $m \in (M/d)((a_1,t_1),(a_2,t_2))$
\item[$\dqt cMd \to G$]  $(\dqt c{\ve{}}d) (s_1,a_1,t_1) \xrightarrow{e} (\dqt c{\ve{}}d)
  (s_2,a_2,t_2)$ for $m \in (\dqt cMd)((s_1,a_1,t_1),(s_2,a_2,t_2))$
\end{description}
The case of the  two-sided homotopy fibre will be especially pertinent.  
\begin{lemma}\label{lemma:twosidedfibre}
  Let $\dqt cMd$ be the two-sided homotopy fibre of Definition~\ref{defn:funddefn}.\eqref{funddefn:2}.
  \begin{enumerate}
  \item The function $\dqt c{\ve{}}d \colon \Ob{\dqt cMd} \to G$ of
    Definition~\ref{defn:funddefn}.\eqref{funddefn:2} defines a functor from the
    two-sided homotopy fibre 
 to the set $G$, and induces a function
   $\pi_0( \dqt c{\ve{}}d ) \colon \pi_0( \dqt cMd
    ) \to G$ of sets.
  \item For any $g \in G$, the component set for the restricted
    two-sided homotopy fibre is
    \begin{equation*}
      \pi_0((\dqt cMd) \{ \dqt c{\ve{}}d= g \}) = \pi_0( \dqt cMd ) \{ \pi_0(\dqt c{\ve{}}d) = g\}
    \end{equation*}
    and  the full subgroupoid $(\dqt cMd) \{ \dqt c{\ve{}}d= g \}$ is
    the union of the components of $\dqt cMd $ in  $\pi_0( \dqt cMd )
    \{ \pi_0(\dqt c{\ve{}}d) = g\}$.
  \end{enumerate}
\end{lemma}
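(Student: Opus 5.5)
The plan is to reduce everything to a single invariance statement: the function $\dqt c{\ve{}}d$ is constant along morphisms of $\dqt cMd$. Once this holds, part (1) is immediate, since a function on objects that agrees on source and target of every morphism is precisely a functor to the discrete category $G$. Part (2) then becomes a formal statement about connected components.

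For the invariance, take a morphism $m \colon (s_1,a_1,t_1) \to (s_2,a_2,t_2)$ in $\dqt cMd$. By the description of two-sided homotopy fibres in Remark~\ref{rmk:pbofpb}, this is a morphism $m \in M(a_1,a_2)$ with
\begin{equation*}
L(m)s_1 = s_2 \quad \text{in } S, \qquad t_2 R(m) = t_1 \quad \text{in } T,
\end{equation*}
the components on $1\{c\}$ and $1\{d\}$ being identities. Applying $H$ and $V$ gives $HL(m)H(s_1)=H(s_2)$ and $V(t_2)VR(m)=V(t_1)$ in $G$. The naturality condition in Definition~\ref{defn:funddefn}.\eqref{funddefn:4} says $\ve{}(a_2)\,HL(m) = VR(m)\,\ve{}(a_1)$. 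Hence
\begin{equation*}
V(t_2)\ve{}(a_2)H(s_2) = V(t_2)\ve{}(a_2)HL(m)H(s_1) = V(t_2)VR(m)\ve{}(a_1)H(s_1) = V(t_1)\ve{}(a_1)H(s_1).
\end{equation*}
This is exactly the commutativity of the large diagram displayed in Definition~\ref{defn:funddefn}.\eqref{funddefn:2}. Commutativity of $G$ is not even needed for this step, only the order of composition.

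Given the invariance, $\dqt c{\ve{}}d$ is constant on each connected component, because any two objects in a component are joined by a morphism. It therefore factors through $\pi_0(\dqt cMd)$, which gives the map $\pi_0(\dqt c{\ve{}}d)$.

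For (2), the full subgroupoid on $\{\dqt c{\ve{}}d = g\}$ is a union of whole components, since the preimage of $g$ is saturated under the "connected by a morphism" relation. Moreover, being a full subgroupoid, it contains all morphisms of $\dqt cMd$ between its objects. So its components are exactly the components $x \in \pi_0(\dqt cMd)$ with $\pi_0(\dqt c{\ve{}}d)(x) = g$. This proves both the displayed equality and the union statement.

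The only step requiring care is the bookkeeping of morphism directions in the two-sided fibre, that is, whether $s_2 = L(m)s_1$ and $t_1 = t_2R(m)$. This must be matched with the convention $m \colon a_1 \to a_2$ in the naturality square. I would check it directly against Definition~\ref{defn:htpypullback} before carrying out the computation.
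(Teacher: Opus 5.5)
Your proposal is correct and is essentially the paper's proof. The paper simply points back to the commutative diagram in Definition~\ref{defn:funddefn}.\eqref{funddefn:2}, which is exactly your computation $V(t_2)\varepsilon(a_2)H(s_2)=V(t_1)\varepsilon(a_1)H(s_1)$ from $s_2=L(m)s_1$, $t_1=t_2R(m)$ and naturality of $\varepsilon$, and then states that the second claim follows from the first; your direction conventions agree with Definition~\ref{defn:htpypullback} and Remark~\ref{rmk:columnrow}, so the bookkeeping check you flag is already settled.
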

\begin{proof}
We saw in Definition~\ref{defn:funddefn}.\eqref{funddefn:2} that 
$\dqt c{\ve{}}d$ has the same value on isomorphic objects of $\dqt cMd$.
The second claim follows from the first one. 
\end{proof}

The two-sided homotopy fibre
and the $\ve{}$-restricted two-sided homotopy fibre of Lemma~\ref{lemma:twosidedfibre}
are  Grothendieck constructions
\begin{align*}
  &\qt cM/d = \int_{a \in \mathrm{Ob}(M)} S(c,La) \times T(Ra,d) \\
  &(\qt cM/d)\{\qt{c}{\ve{}}/d=g\} = \int_{a \in \mathrm{Ob}(M)} \{ (s,t) \in S(c,La) \times T(Ra,d) \mid 
   V(t)\ve{}(\alpha)H(s) =g\}
\end{align*}

\begin{rmk}[More on  composable $G$-spans] \label{rmk:clarification}
Consider two composable
$G$-spans as in
Definition~\ref{defn:funddefn}.\eqref{funddefn:4a}. Let $c_1$, $c_2$
be objects of $S_1$, $S_2$. The objects of the groupoid
$\qt{c_1}{(M_1 \times_T M_2)}/c_2$ 
of Definition~\ref{defn:funddefn}.\eqref{funddefn:6}
are all $(s_1,a_1,t,a_2,s_2)$ where
$a_1 \in \mathrm{Ob}(M_1)$, $a_2 \in \mathrm{Ob}(M_2)$,
$t \in T(R_1a_1,L_2a_2)$,
$s_1 \in S_1(c_1,L_1a_1)$, $s_2 \in S_2(R_2a_2,c_2)$:
\begin{center}
   \begin{tikzpicture}[double distance = 1.5pt]
       \matrix (dia) [matrix of math nodes, column sep=25pt, row
       sep=25pt]{
      c_1 & L_1a_1 & R_1a_1 & L_2a_2 & R_2a_2 & c_2 \\};
      \draw[->] (dia-1-1) -- node[above]{$s_1$}(dia-1-2);
      \draw[->] (dia-1-3) -- node[above]{$t$}(dia-1-4);
      \draw[->] (dia-1-5) -- node[above]{$s_2$}(dia-1-6);
   \end{tikzpicture}
\end{center}
The function $\qt{c_1}{(\ve 1 \times_T \ve 2 )}/c_2  \colon \mathrm{Ob}(\qt{c_1}{(M_1 \times_T M_2)}/c_2) \to G$ from
Definition~\ref{defn:funddefn}.\eqref{funddefn:2}
is
\begin{multline*}
  (\qt{c_1}{(\ve 1 \times_T \ve 2 )}/c_2)(s_1,a_1,t,a_2,s_2) = 
V_2(s_2) (\ve 1 \times_T \ve 2) (a_1,t,a_2) H_1(s_1) =
V_2(s_2) \ve 2(a_2) V_1(t) \ve 1(a_1)  H_1(s_1) \\ =
(\ve 2(a_2)/d)(a_2,s_2) V_1(t) (\qt{c_1}{\ve 1})(s_1,a_1) =
((\qt{c_1}{\ve 1}) \times_T  (\ve 2/c_2))(s_1,a_1,t,a_2,s_2)
\end{multline*}
This means that $\dqt{c_1}{(M_1 \times_T M_2)}{c_2} = (\qt{c_1}{M_1})
\times_T (M_2/c_2)$ and $\dqt{c_1}{(\ve 1 \times_T \ve 2)}{c_2} =
(\qt{c_1}{\ve 1})
\times_T (\ve 2/c_2)$.

For fixed object $d$ in $T$, $a_1$ and $a_2$ objects in $M_1$ and
$M_2$ such that $R_1a_1$ and $L_2a_2$ lie in the same 
component of $T$ as $d$,
and $g \in G$, there is a map
\begin{equation}\label{eq:t2t1}
  \coprod_{g_2g_1=g} \big(T(R_1a_1,d)\{\ve 1/d=g_1\} \times T(d,L_2a_2)\{d/\ve 2 = g_2\}\big) \to 
T(R_1a_1,L_2a_2)\{\ve 1\times_T \ve 2 = g\}
\end{equation}
taking $(t_1,t_2)$ to $t_2t_1$. This is because if $t_1 \in T(R_1a_1,d)$ and $t_2 \in T(d,L_2a_2)$ then 
\begin{multline*}
  (\ve 1 \times_T \ve 2)(a_1,t_2t_1,a_2) = \ve 2(a_2) V_1(t_2t_1) \ve 1(a_1)  = 
\ve 2(a_2)V_1(t_2)V_1(t_1)\ve 1(a_1)) \\= \ve 2(a_2) H_2(t_2)  V_1(t_1) \ve 1(a_1) =
 (\qt d{\ve 2})(t_2,a_2)  (\ve 1/d)(a_1,t_1)
\end{multline*}
as $V_1=H_2$. The map \eqref{eq:t2t1} is $|T(d,d)|$-to-$1$ onto since any \m\ $t \in T(R_1a_1,L_2a_2)$ admits
$|T(d,d)|$ factorizations $t=t_2t_1$, $t_1 \in T(R_1a_1,d)$, $t_2 \in T(d,L_2a_2)$, through $d$.
\end{rmk}

\begin{lemma}\label{lemma:main}
For composable $G$-spans of groupoids as in Definition~\ref{defn:funddefn}.\eqref{funddefn:4a},
  \begin{equation*} 
    \chi((M_1 \times_T M_2)\{ \ve{1} \times_T \ve{2} = g\} )=
   \sum_{d \in \pi_0(T)} \sum_{\substack{g_1,g_2 \in G \\  g_2g_1=g}}  \chi((M_1/d)\{\ve 1/d=g_1\}) \chi(T\{d\}) \chi((\qt d{M_2})\{\qt d{\ve 2}=g_2\})
  \end{equation*}
\end{lemma}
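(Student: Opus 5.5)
The plan is to repeat the proof of Lemma~\ref{lemma:chiM1TM2}, but now tracking the label $\ve 1 \times_T \ve 2$. By Lemma~\ref{lemma:twosidedfibre}, or rather the same argument applied to $M_1 \times_T M_2$ (where $\ve 1\times_T\ve 2$ is constant on isomorphism classes, as shown in Definition~\ref{defn:funddefn}.\eqref{funddefn:4a}), the restricted groupoid $(M_1\times_T M_2)\{\ve 1\times_T\ve 2=g\}$ is a union of components. It is therefore itself a Grothendieck construction:
\begin{equation*}
(M_1\times_T M_2)\{\ve 1\times_T\ve 2=g\} = \int_{(a_1,a_2)\in \Ob{M_1^{\mathrm{op}}\times M_2}} T(R_1a_1,L_2a_2)\{\ve 1\times_T\ve 2=g\}.
\end{equation*}
Here $T(R_1a_1,L_2a_2)\{\ve 1\times_T\ve 2 = g\}$ denotes the set of $t$ with $\ve 2(a_2)V_1(t)\ve 1(a_1)=g$. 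Functoriality in $(a_1,a_2)$ follows from \eqref{eq:VLalpha}.

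With a coweighting $k^1_\bullet$ on $M_1$ and a weighting $k_2^\bullet$ on $M_2$, \cite[Proposition~2.8]{leinster08} gives
\begin{equation*}
\chi\big((M_1\times_T M_2)\{\ve 1\times_T\ve 2=g\}\big)=\sum_{a_1,a_2}k^1_{a_1}k_2^{a_2}\,\big|T(R_1a_1,L_2a_2)\{\ve 1\times_T\ve 2=g\}\big|.
\end{equation*}
Next I group the pairs $(a_1,a_2)$ by the component $d\in\pi_0(T)$ containing $R_1a_1$ and $L_2a_2$; pairs in different components contribute zero. For each such pair, the map \eqref{eq:t2t1} of Remark~\ref{rmk:clarification} is $|T(d,d)|$-to-one and onto. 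Hence
\begin{equation*}
\big|T(R_1a_1,L_2a_2)\{\ve 1\times_T\ve 2=g\}\big| = \chi(T\{d\})\sum_{g_2g_1=g}\big|T(R_1a_1,d)\{\ve 1/d=g_1\}\big|\,\big|T(d,L_2a_2)\{\qt d{\ve 2}=g_2\}\big|.
\end{equation*}
Here $T(R_1a_1,d)\{\ve1/d=g_1\}$ is the set of $t_1$ with $V_1(t_1)\ve 1(a_1)=g_1$, and similarly for $t_2$. As in Lemma~\ref{lemma:chiM1TM2}, the condition that $a_1,a_2$ lie over $d$ may now be dropped, since the sets are empty otherwise. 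The sum over $(a_1,a_2)$ then factors into
\begin{equation*}
\Big(\sum_{a_1}k^1_{a_1}\big|T(R_1a_1,d)\{\ve 1/d=g_1\}\big|\Big)\Big(\sum_{a_2}k_2^{a_2}\big|T(d,L_2a_2)\{\qt d{\ve2}=g_2\}\big|\Big).
\end{equation*}

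The final step identifies these two factors as $\chi((M_1/d)\{\ve 1/d=g_1\})$ and $\chi((\qt d{M_2})\{\qt d{\ve 2}=g_2\})$. The restricted fibres are Grothendieck constructions $\int_{a_1} T(R_1a_1,d)\{\ve 1/d=g_1\}$ over $M_1^{\mathrm{op}}$, and similarly over $M_2$. This uses the fact that $\ve 1/d$ and $\qt d{\ve 2}$ are invariant under morphisms, which is the commutative diagram shown in Definition~\ref{defn:funddefn}.\eqref{funddefn:1b}. Leinster's formula then applies once more.

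The main obstacle is bookkeeping rather than a conceptual difficulty. One has to check that the $|T(d,d)|$-to-one count in \eqref{eq:t2t1} respects the labels, that is, that each $t$ with label $g$ has exactly $|T(d,d)|$ preimages in the disjoint union over $g_2g_1=g$. This holds because the label of $t_2t_1$ is the product $g_2g_1$, which is computed in Remark~\ref{rmk:clarification} using $V_1=H_2$. One also has to be careful about which side uses a weighting and which a coweighting, matching the variance of $M_1/d$ and $\qt d{M_2}$.
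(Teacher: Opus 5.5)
Your argument follows the paper's proof step for step: Leinster's formula for a Grothendieck construction, splitting over $d\in\pi_0(T)$, the $|T(d,d)|$-to-one map \eqref{eq:t2t1}, then factoring. The gap is in the justification you give for the first and last steps: the labels are \emph{not} constant on isomorphism classes. For a morphism $(m_1,m_2)\colon(a_1,t,a_2)\to(b_1,u,b_2)$ of $M_1\times_T M_2$ one has $u=L_2(m_2)\,t\,R_1(m_1)^{-1}$. Combining \eqref{eq:VLalpha} with naturality of $\varepsilon_1,\varepsilon_2$ gives
\[
(\varepsilon_1\times_T\varepsilon_2)(b_1,u,b_2)=V_2R_2(m_2)\,(\varepsilon_1\times_T\varepsilon_2)(a_1,t,a_2)\,H_1L_1(m_1)^{-1}.
\]
Similarly, $\varepsilon_1/d$ changes by $H_1L_1(m)^{-1}$ and $\qt{d}{\varepsilon_2}$ changes by $V_2R_2(m)$. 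The squares in Definition~\ref{defn:funddefn}.\eqref{funddefn:4a} and \eqref{funddefn:1b} express exactly this twisted compatibility, not invariance. Lemma~\ref{lemma:twosidedfibre} gives invariance only for two-sided fibres, where the outer groupoids are $1\{c\}$ and $1\{d\}$. So the label conditions do not cut out subfunctors of $(a_1,a_2)\mapsto T(R_1a_1,L_2a_2)$, $a_1\mapsto T(R_1a_1,d)$, or $a_2\mapsto T(d,L_2a_2)$. The restricted groupoids are full subgroupoids but not Grothendieck constructions, and \cite[Proposition~2.8]{leinster08} does not apply to them.

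In fact the statement is false in this generality. Take $G=C_2$ and $S_1=M_1=T=M_2=S_2=BC_2$, with all functors the identity and $\varepsilon_1=\varepsilon_2=e$. Then $(M_1\times_T M_2)\{\varepsilon_1\times_T\varepsilon_2=e\}$ is the single object $(\bullet,e,\bullet)$ with automorphism group $\{(m,m)\}\cong C_2$, so the left side is $\tfrac12$. Each $(M_1/d)\{\varepsilon_1/d=g_1\}$ and each $(\qt{d}{M_2})\{\qt{d}{\varepsilon_2}=g_2\}$ is a single object with trivial automorphisms, so the right side is $2\cdot 1\cdot\tfrac12\cdot 1=1$. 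Your weighted sums evaluate to $\tfrac14$, matching neither side.

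The paper's printed proof takes the same unjustified step, writing the restricted groupoid as a Grothendieck construction without comment. The repair is to assume that $H_1L_1$ and $V_2R_2$ send every morphism to $e$, for instance $S_1=1\{c_1\}$ and $S_2=1\{c_2\}$. This is exactly the case $(\qt{c_1}{M_1})\times_T(M_2/c_2)$ in which Theorem~\ref{thm:main} uses the lemma. Under that hypothesis all three labels are invariant, the label sets are subfunctors, and your argument goes through verbatim.
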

\begin{proof}
Let $k_\bullet$ be a co\we\ on $M_1$ and $k^\bullet$ a \we\ on
$M_2$. The \Euc\ of the groupoid
\begin{equation*}
  (M_1 \times_T M_2)\{ \ve 1 \times_T \ve 2 = g\} 
= \int_{(a_1,a_2) \in \Ob{M_1^{\mathrm{op}} \times M_2}} \{t \in T(R_1a_1,L_2a_2) \mid 
\ve 2(a_2)V_1(t) \ve 1(a_1)=g\}
\end{equation*}
is \cite[Definition~1.10, Proposition~2.8]{leinster08}
\begin{align*}
   \sum_{a_1,a_2} &k_{a_1}k^{a_2} |\{ t \in  T(R_1a_1,L_2a_2) \mid \ve 2(a_2)V_1(t) \ve 1(a_1)=g\}| \\=
  &\sum_{d \in \pi_0(T)}\sum_{\substack{a_1,a_2 \\ T(R_1a_1,d) \neq
  \emptyset, T(d,L_2a_2) \neq \emptyset}} k_{a_1}k^{a_2}|\{ t \in  T(R_1a_1,L_2a_2) \mid \ve 2(a_2)V_1(t) \ve 1(a_1)=g\}| \\=
    &   \sum_{d \in \pi_0(T)}  |T(d,d)|^{-1} \sum_{\substack{g_1,g_2 \in G \\ g_2g_1=g}} 
\sum_{\substack{a_1,a_2 \\ T(R_1a_1,d) \neq
  \emptyset, T(d,L_2a_2) \neq \emptyset}}  k_{a_1}k^{a_2} \times \\  & \hspace{5cm}
 |\{ t_1 \in  T(R_1a_1,d) \mid V_1(t_1)\ve 1(a_1) =g_1\}| 
   |\{ t_2 \in  T(d,L_2a_2) \mid  \ve 2(a_2) H_2(t_2) =g_2\}| 
\\=
   &
   \sum_{d \in \pi_0(T)} \chi(T\{d\}) \sum_{\substack{g_1,g_2 \in G \\ g_2g_1=g}} 
\Big( \sum_{a_1} k_{a_1}   |\{ t_1 \in  T(R_1a_1,d) \mid  V_1(t_1)\ve
  1(a_1)=g_1\}| \Big) \times \\
   & \hspace{5cm} \Big(  \sum_{a_2} k^{a_2}  |\{ t_2 \in  T(d,L_2a_2) \mid   \ve 2(a_2) H_2t_2) =g_2\}| \Big)
\\=
    &\sum_{d \in \pi_0(T)} \sum_{g_2g_1=g}  \chi((M_1/d)\{\ve 1/d=g_1\}) \chi(T\{d\}) \chi((\qt d{M_2})\{\qt d{\ve 2}=g_2\})
\end{align*}
The factor $|T(d,d)|^{-1} = \chi(T\{d\})$ appears
at the second equality sign because \eqref{eq:t2t1} is $|T(d,d)|$-to-$1$.
\end{proof}

 \begin{thm}\label{thm:main}
  For composable $G$-spans of groupoids as in Definition~\ref{defn:funddefn}.\eqref{funddefn:4a},
  \begin{equation*}
    [M_1 \times_T M_2, \varepsilon_1 \times_T \varepsilon_2] = [M_1,\varepsilon_1][M_2,\varepsilon_2]
  \end{equation*}
 \end{thm}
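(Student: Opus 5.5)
We compare the $(c_1,c_2)$ entries of the two sides for $c_1 \in \pi_0(S_1)$ and $c_2 \in \pi_0(S_2)$, and in each we compare the coefficient of a fixed $g \in G$. By Definition~\ref{defn:funddefn}.\eqref{funddefn:6}, the coefficient of $g$ on the left is
\begin{equation*}
\chi\big((\dqt{c_1}{(M_1\times_T M_2)}{c_2})\{\dqt{c_1}{(\ve 1\times_T\ve 2)}{c_2}=g\}\big)\,\chi(S_2\{c_2\}).
\end{equation*}
On the right, the product of matrices over $\Q G$ has $(c_1,c_2)$ entry $\sum_{d\in\pi_0(T)}[M_1,\ve 1](c_1,d)\,[M_2,\ve 2](d,c_2)$. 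The coefficient of $g$ in this entry is
\begin{equation*}
\sum_{d\in\pi_0(T)}\sum_{g_2g_1=g}\chi\big((\dqt{c_1}{M_1}{d})\{\dqt{c_1}{\ve 1}{d}=g_1\}\big)\chi(T\{d\})\,\chi\big((\dqt{d}{M_2}{c_2})\{\dqt{d}{\ve 2}{c_2}=g_2\}\big)\chi(S_2\{c_2\}).
\end{equation*}
Here we use that $G$ is abelian, so the order in the product $g_1g_2=g_2g_1$ is irrelevant. The factor $\chi(S_2\{c_2\})$ is common to both sides and can be cancelled, so it remains to identify the two Euler characteristics.

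The key step is to recognize the left side as an instance of Lemma~\ref{lemma:main}. By Remark~\ref{rmk:clarification}, we have
\begin{equation*}
\dqt{c_1}{(M_1\times_T M_2)}{c_2}=(\qt{c_1}{M_1})\times_T(M_2/c_2), \qquad \dqt{c_1}{(\ve 1\times_T\ve 2)}{c_2}=(\qt{c_1}{\ve 1})\times_T(\ve 2/c_2).
\end{equation*}
By Definition~\ref{defn:funddefn}.\eqref{funddefn:1b}, the fibres $\qt{c_1}{M_1}$ (a $G$-span from $1\{c_1\}$ to $T$) and $M_2/c_2$ (a $G$-span from $T$ to $1\{c_2\}$) are composable $G$-spans with middle groupoid $T$. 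Applying Lemma~\ref{lemma:main} to this pair gives
\begin{equation*}
\sum_{d}\sum_{g_2g_1=g}\chi\big((\qt{c_1}{M_1}/d)\{\qt{c_1}{\ve 1}/d=g_1\}\big)\chi(T\{d\})\chi\big((\qt d{(M_2/c_2)})\{\qt d{(\ve 2/c_2)}=g_2\}\big).
\end{equation*}

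It remains to identify the iterated fibres $(\qt{c_1}{M_1})/d$ with $\dqt{c_1}{M_1}{d}$, and $\qt d{(M_2/c_2)}$ with $\dqt d{M_2}{c_2}$, compatibly with the $G$-labels. On objects, both sides of each identification consist of the triples $(s,a,t)$. The labels agree by the defining formulas, since $(\qt c\ve{})/d$ evaluated at $(s,a,t)$ is $V(t)\ve{}(a)H(s)=(\dqt c{\ve{}}d)(s,a,t)$. The morphisms agree as well, since in both cases a morphism is a morphism $m$ of $M$ making the relevant triangles commute. Substituting these identifications gives exactly the right-hand coefficient.

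I expect this bookkeeping to be the only delicate point. Everything must be checked at the level of the actual groupoids, as the iterated Grothendieck constructions, rather than only up to equivalence, although equivalence would already suffice because Euler characteristic and the $\ve{}$-restriction are invariant under equivalence by Lemma~\ref{lemma:twosidedfibre}. One must also confirm that the hypothesis $V_1=H_2$ carries over to the fibred spans: the restriction of $V_1$ and of $H_2$ to $T$ is unchanged by taking fibres, so the composability condition of Definition~\ref{defn:funddefn}.\eqref{funddefn:4a} still holds.
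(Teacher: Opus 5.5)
Your proof is correct and follows essentially the same route as the paper. You identify $\dqt{c_1}{(M_1\times_T M_2)}{c_2}$ with $(\qt{c_1}{M_1})\times_T(M_2/c_2)$ via Remark~\ref{rmk:clarification}, apply Lemma~\ref{lemma:main} to obtain \eqref{eq:chiprodT}, and compare coefficients of each $g$. The only cosmetic difference is that you use commutativity of $G$ to reorder $g_1g_2=g_2g_1$, whereas the paper writes the matrix product in the order of Remark~\ref{rmk:nonabelianG}, which also covers non-abelian $G$.
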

 \begin{proof}
   Lemma~\ref{lemma:main} applied to
   \begin{equation*}
((\qt{c_1}{M_1}) \times_T (M_2/c_2))\{(\qt{c_1}{\ve 1}) \times_T (\ve 2/c_2)=g\} =
   (\dqt{c_1}{(M_1 \times_T M_2)}{c_2})\{ \dqt{c_1}{(\ve 1 \times_T \ve 2)}{c_2}=g\}     
   \end{equation*}
gives
   \begin{equation}    
     \begin{aligned}[b]
      \label{eq:chiprodT}
     \chi((\qt{c_1}{(M_1 & \times_T M_2)}/c_2)   \{
     \qt{c_1}{(\varepsilon_1 \times_T \varepsilon_2)}/c_2=g \}) \\ &=
     \sum_{d \in \pi_0(T)}  \sum_{\substack{g_1,g_2 \in G \\  g_2g_1=g}} 
      \chi( (\qt{c_1}{M_1}/d)\{\qt{c_1}{\varepsilon_1}/d = g_1\}) \chi(T\{d\})
      \chi( (\qt{d}{M_2}/c_2)\{ \qt{d}{\varepsilon_2}/c_2 = g_2\}) 
    \end{aligned}
   \end{equation}
   for all $g \in G$, $c_1 \in \Ob{S_1}$, $c_2 \in \Ob{S_2}$. The
   chain of equalities
  \begin{align*}
    ([&M_1,\ve 1][M_2,\ve 2])(c_1,c_2)  \stackrel{\text{Rmk~\ref{rmk:nonabelianG}}}{=} 
    \sum_{d \in \pi_0(T)}     [M_2,\ve 2](d,c_2)  [M_1,\ve 1](c_1,d) 
    \\&=
   \sum_{d \in\pi_0(T)} 
\big(
   \sum_{g_2 \in G} \chi((\dqt{d}{M_2}{c_2})\{\dqt{d}{\ve
     2}{c_2}=g_2\}) \chi(S_2\{c_2\})g_2 \big)
\big(
   \sum_{g_1 \in G} \chi((\dqt{c_1}{M_1}{d})\{\dqt{c_1}{\ve
     1}{d}=g_1\}) \chi(T\{d\})g_1 \big)
    \\
&= 
   \sum_{d \in\pi_0(T)} 
    \sum_{\substack{g_1,g_2 \in G \\ g_2g_1=g}} \big(
    \chi((\dqt{d}{M_2}{c_2})\{\dqt{d}{\ve
     2}{c_2}=g_2\}) \chi(S_2\{c_2\}) 
\chi((\dqt{c_1}{M_1}{d})\{\dqt{c_1}{\ve
     1}{d}=g_1\}) \chi(T\{d\}) \big)g
\\
    &=
    \sum_{d \in\pi_0(T)} \big(\sum_{\substack{g_1,g_2 \in G \\ g_2g_1=g}}
    \chi((\dqt{c_1}{M_1}{d})\{\dqt{c_1}{\ve
     1}{d}=g_1\}) \chi(T\{d\})
 \chi((\dqt{d}{M_2}{c_2})\{\dqt{d}{\ve
     2}{c_2}=g_2\}) \chi(S_2\{c_2\}) \big) g \\
 & =
    \sum_{d \in\pi_0(T)} \big(\sum_{\substack{g_1,g_2 \in G \\ g_2g_1=g}}
    \chi((\dqt{c_1}{M_1}{d})\{\dqt{c_1}{\ve
     1}{d}=g_1\}) \chi(T\{d\})
 \chi((\dqt{d}{M_2}{c_2})\{\dqt{d}{\ve
     2}{c_2}=g_2\}) \chi(S_2\{c_2\}) \big) g \\
 & \stackrel{\text{\eqref{eq:chiprodT}}}{=} \sum_{g \in G} 
     \chi((\qt{c_1}{(M_1 \times_T M_2)}/c_2) \{
     \qt{c_1}{(\varepsilon_1 \times_T \varepsilon_2)}/c_2=g \})
     \chi(S_2\{c_2\}) g \\
    &=
      [M_1 \times_T M_2, \ve 1 \times_T \ve 2](c_1,c_2)
  \end{align*}
now proves the theorem.
 \end{proof}

 \begin{rmk} \label{rmk:nonabelianG}
   Theorem~\ref{thm:main} remains valid also for  non-abelian finite
   groups $G$ provided we use the convention
   \begin{equation*}
      ([M_1,\ve 1][M_2,\ve 2])(c_1,c_2) = 
    \sum_{d \in \pi_0(T)}     [M_2,\ve 2](d,c_2)  [M_1,\ve 1](c_1,d)
   \end{equation*}
  for matrix products over possibly non-commutative group rings $\Q G$.
 \end{rmk}

With the help of a complex character
\func{\rho}{\Q G}{\C}, we can associate a {\em complex\/} valued function
\begin{equation}
  \label{eq:[[M]]}
  [[M,\ve{}]] = \rho \circ [M,\ve{}] \colon \pi_0(S) \times \pi_0(T)
  \xrightarrow{[M,\ve{}]} \Q G \xrightarrow{\rho} \C
\end{equation}
to any $G$-span. These complex matrices still satisfy
$[[M_1 \times_T M_2, \varepsilon_1 \times_T \varepsilon_2]] =
[[M_1,\varepsilon_1]][[M_2,\varepsilon_2]]$ for composable $G$-spans.

\begin{rmk}[Column and row vectors] \label{rmk:columnrow}
To  $G$-spans of the form 
 \begin{center}
      \begin{tikzpicture}[double distance = 1.5pt]
       \matrix (dia) [matrix of math nodes, column sep=65pt, row
       sep=25pt]{
A & \{1\} & B & T \\
S & BG & \{1\} & BG \\};
\draw[->] (dia-1-1) --  (dia-1-2);
\draw[->] (dia-1-3) -- node[above]{$R$}  (dia-1-4);
\draw[->] (dia-2-1) -- node[below]{$H$} (dia-2-2);
\draw[->] (dia-2-3) --  (dia-2-4);
\draw[->] (dia-1-1) -- node[left]{$L$}(dia-2-1);
\draw[->] (dia-1-2) -- (dia-2-2);
\draw[->] (dia-1-3) -- (dia-2-3);
\draw[->] (dia-1-4) -- node[right]{$V$}(dia-2-4);

\path (dia-2-1) -- node[midway]{$\ve 1 \colon HL \implies e $}
(dia-1-2);

\path (dia-2-3) -- node[midway]{$\ve 2 \colon e \implies VR $}
(dia-1-4);

\end{tikzpicture}
\end{center}
 we have associated, respectively, the column and the row vector
 \begin{equation*}
    [A,\ve 1](c,1) = \sum_{g \in G} \chi( (\qt cA)\{ \qt c{\ve 1}=g\})
    g \qquad
  [B,\ve 2](1,d) = \sum_{g \in G} \chi((B/d)\{\ve 2/d = g\})
   T(\{d\})g 
 \end{equation*}
where $c \in \pi_0(S)$, $d \in \pi_0(T)$ and
\begin{gather*}
  \Ob{\qt cA} = \{(a,s) \mid a \in \Ob{A}, s \in S(c,La)\} \quad
  \Ob{(\qt cA)\{\qt c{\ve 1} =g\}} = \{(a,s) \in \Ob{\qt cA} \mid \ve
  1(a)H(s)=g\} \\
  (\qt cA)((a_1,s_1),(a_2,s_2)) =\{m \in A(a_1,a_2) \mid 
  L(m)s_1=s_2\} \\
  \Ob{B/d} = \{(b,t) \mid b \in \Ob{B}, t \in T(Rb,d)\} \quad
  \Ob{ (B/d)\{\ve 2/d =g\}} = \{(b,t) \in \Ob{B/d} \mid V(t) \ve
  2(b)=g\} \\
   (B/d)((b_1,t_1),(b_2,t_2)) = \{ m \in B(b_1,b_2) \mid t_1=
   t_2R(m)\} 
\end{gather*}
\end{rmk}



\section{Categorical aspects of $G$-spans}
\label{sec:functorial-aspects}

According to Theorem~\ref{thm:main}  there are
functors $\mathbf{span}_G \to \mathbf{mat}_R$ for $R=\Q
G$ or $R= \C$
where
\begin{description}
\item[$\mathbf{span}_G$]  is the category
whose objects are finite groupoids $S \xrightarrow{H} BG$ over $BG$
and where 
\m s $(S \xrightarrow{H} BG) \to (T \xrightarrow{V} BG)$ are $G$-spans as in 
  Definition~\ref{defn:funddefn}.\eqref{funddefn:4}.
  Composition of $G$-spans is given by homotopy pull-backs  
 as in
  Definition~\ref{defn:funddefn}.\eqref{funddefn:4a}.
\item[$\mathbf{mat}_R$] is the category whose objects are finite sets
  and \m s
  $I \to J$ are functions $I \times J \to R$.  Composition is matrix
multiplication.
\end{description}

The functor $\mathbf{span}_G \to \mathbf{mat}_R$ takes the groupoid \m\ $S \xrightarrow{H}
BG$ over $BG$ to the set $\pi_0(S)$ and the $G$-span of
Definition~\ref{defn:funddefn}.\eqref{funddefn:4} with apex $M$ and
natural transformation $\ve{} \colon \Ob{M} \to G$ to the matrix
$[M,\ve{}]$ of Definition~\ref{defn:funddefn}.\eqref{funddefn:6} for
$R=\Q G$ or $[[M,\ve{}]]$ for $R=\C$ \eqref{eq:[[M]]}. We have 
commutative diagrams as below where the vertical maps assign matrices to
$G$-spans as in Definition~\ref{defn:funddefn}.\eqref{funddefn:6}
\begin{center}
  \begin{tikzpicture}[double distance = 1.5pt]
       \matrix (dia) [matrix of math nodes, column sep=35pt, row
       sep=25pt]{
    \mathbf{span}_G(S_1 \xrightarrow{H} BG , T \xrightarrow{R} BG) \times 
   \mathbf{span}_G(T \xrightarrow{R} BG, S_2 \xrightarrow{V} BG) &
   \mathbf{span}_G(S_1 \xrightarrow{H} BG,  S_2 \xrightarrow{V} BG) \\
   \mathbf{mat}_R(\pi_0(S),\pi_0(T)) \times \mathbf{mat}_R(\pi_0(T), \pi_0(S_2)) & \mathbf{mat}_R(\pi_0(S_1), \pi_0(S_2)) \\};  
  \draw[->](dia-1-1) -- node[left]{$[\cdot,\cdot] \times [\cdot,\cdot]$} (dia-2-1);
  \draw[->](dia-1-1) -- (dia-1-2);
  \draw[->](dia-2-1) -- (dia-2-2);
  \draw[->](dia-1-2) -- node[right]{$[\cdot,\cdot]$}(dia-2-2);
\end{tikzpicture}
\end{center}
and the upper horizontal map is composition of $G$-spans as in
Definition~\ref{defn:funddefn}.\eqref{funddefn:4a}
 while
the lower one represents matrix multiplication over $R$.


Suppose that  
\begin{center}
      \begin{tikzpicture}[double distance = 1.5pt]
       \matrix (dia) [matrix of math nodes, column sep=30pt, row
       sep=25pt]{
  S &{}& T \\
  & BG \\};
 \draw[->] (dia-1-1) -- node[above]{$\varphi$}(dia-1-3);
\draw[->] (dia-1-1) -- node[left,xshift=-5pt,pos=0.6]{$H$}(dia-2-2);
\draw[->] (dia-1-3) -- node[right,xshift=5pt,pos=0.6]{$V$}(dia-2-2);
 \path (dia-2-2) -- node[pos=.7]{$\ve{}$} (dia-1-2);
      \end{tikzpicture}
\end{center}
is a diagram of finite groupoids and that the function $\ve{} \colon \Ob{S} \to G$ is
a natural transformation $H \implies V\varphi$ such that
\begin{center}
      \begin{tikzpicture}[double distance = 1.5pt]
       \matrix (dia) [matrix of math nodes, column sep=30pt, row
       sep=25pt]{
  Ha_1 & V\varphi a_1 \\
  Ha_2 & V\varphi a_2\\};
 \draw[->] (dia-1-1) -- node[above]{$\ve{}(a_1)$}(dia-1-2);
 \draw[->] (dia-2-1) -- node[below]{$\ve{}(a_2)$}(dia-2-2);
 \draw[->] (dia-1-1) -- node[left]{$H(s)$}(dia-2-1);
\draw[->] (dia-1-2) -- node[right]{$(V\varphi)(s)$}(dia-2-2);
      \end{tikzpicture}
\end{center}
commutes in $G$ for every \m\ $s \colon a_1 \to a_2 \in S$.
Let
$(\varphi,\ve{})_* \in \mathbf{span}_G(S \xrightarrow{H} BG,T
\xrightarrow{V} BG)$ and $(\varphi,\ve{}^{-1})^* \in \mathbf{span}_G(T
\xrightarrow{V} BG, S \xrightarrow{H} BG)$ be the $G$-spans 
\begin{center}
      \begin{tikzpicture}[double distance = 1.5pt]
       \matrix (dia) [matrix of math nodes, column sep=30pt, row
       sep=25pt]{
  S & T \\
  S & BG \\};
 \draw[->] (dia-1-1) -- node[above]{$\varphi$}(dia-1-2);
\draw[->] (dia-2-1) -- node[below]{$H$}(dia-2-2);
\draw[->] (dia-1-2) -- node[right]{$V$}(dia-2-2);
\draw[double] (dia-1-1) -- (dia-2-1);
 \path (dia-2-1) -- node[pos=.5]{$\ve{}(a)$} (dia-1-2);
      \end{tikzpicture}
\qquad \qquad
      \begin{tikzpicture}[double distance = 1.5pt]
       \matrix (dia) [matrix of math nodes, column sep=30pt, row
       sep=25pt]{
  S & S \\
  T & BG \\};
\draw[->] (dia-1-1) -- node[left]{$\varphi$}(dia-2-1);
\draw[->] (dia-1-2) -- node[right]{$H$}(dia-2-2);
\draw[->] (dia-2-1) -- node[below]{$V$}(dia-2-2);
\draw[double] (dia-1-1) -- (dia-1-2);
 \path (dia-2-1) -- node[pos=.5]{$\ve{}(a)^{-1}$} (dia-1-2);
      \end{tikzpicture}
\end{center}
and let $T(\varphi(c),d) \xrightarrow{\varphi_*(c,d)} G
\xleftarrow{\varphi^*(c,d)} T(d,\varphi(c))$ be the maps
$\varphi_*(c,d)(t)=V(t)\ve{}(c)$,
$\varphi^*(d,c)(t)=\ve{}(c)^{-1}V(t)$, $c \in \pi_0(S)$,  $d \in \pi_0(T)$.  
\begin{prop}\label{prop:phi*}
 The functions $[(\varphi, \ve{})_*] \colon \pi_0(S) \times
 \pi_0(T) \to \Q G$ and $[(\varphi, \ve{}^{-1})^*] \colon \pi_0(T) \times
 \pi_0(S) \to \Q G$ are
  \begin{equation*}
    [(\varphi, \ve{})_*](c,d)=\chi(T\{d\})\sum_{g \in G}
    |\varphi_*(c,d)^{-1}(g)|g \qquad
    [(\varphi, \ve{}^{-1})^*](d,c)=\chi(S\{c\})\sum_{g \in G}
    |\varphi^*(d,c)^{-1}(g)|g
  \end{equation*}
\end{prop}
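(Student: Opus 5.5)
The plan is to compute both matrices directly from Definition~\ref{defn:funddefn}.\eqref{funddefn:6}. The key point is that when one leg of the span is an identity functor, the corresponding side of the two-sided homotopy fibre contracts away. The restricted two-sided fibre then becomes a discrete groupoid, namely a hom-set of $T$, and its \Euc\ is just a cardinality.

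For $(\varphi,\ve{})_*$ the apex is $M=S$ with $L=\mathrm{id}_S$ and $R=\varphi$. Fix representatives $c\in\Ob{S}$ and $d\in\Ob{T}$. By Definition~\ref{defn:funddefn}.\eqref{funddefn:2}, the objects of $\dqt cSd$ are triples $(s,a,t)$ with $s\in S(c,a)$ and $t\in T(\varphi a,d)$. A morphism $(s_1,a_1,t_1)\to(s_2,a_2,t_2)$ is an $m\in S(a_1,a_2)$ with $ms_1=s_2$ and $t_2\varphi(m)=t_1$. Hence $m=s_2s_1^{-1}$ is forced, and such a morphism exists precisely when $t_2\varphi(s_2)=t_1\varphi(s_1)$.

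I will therefore show that the functor $\dqt cSd\to T(\varphi c,d)$, $(s,a,t)\mapsto t\varphi(s)$, to the discrete groupoid on the set $T(\varphi c,d)$ is an equivalence. It is faithful since morphisms are unique, it is full by the computation above, and it is essentially surjective via $(\mathrm{id}_c,c,t)$. Next I check that it is compatible with the labels:
\[
(\dqt c{\ve{}}d)(s,a,t)=V(t)\ve{}(a)H(s)=V(t)\,(V\varphi)(s)\,\ve{}(c)=V(t\varphi(s))\ve{}(c)=\varphi_*(c,d)(t\varphi(s)),
\]
where the second equality is naturality of $\ve{}$ applied to $s\colon c\to a$. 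So the equivalence restricts to an equivalence between $(\dqt cSd)\{\dqt c{\ve{}}d=g\}$ and the discrete set $\varphi_*(c,d)^{-1}(g)$; Lemma~\ref{lemma:twosidedfibre} guarantees that this restriction is a union of components. Since \Euc\ is invariant under equivalence and a discrete groupoid has \Euc\ equal to its cardinality, we get $\chi = |\varphi_*(c,d)^{-1}(g)|$. Multiplying by $\chi(T\{d\})$ gives the first formula.

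The second formula follows by the mirror-image argument. The apex is $S$ with $L=\varphi$, $R=\mathrm{id}_S$ and natural transformation $\ve{}^{-1}$. Now the fibre $\dqt dSc$ is equivalent to the discrete set $T(d,\varphi c)$ via $(s,a,t)\mapsto\varphi(t)s$, for $s\in T(d,\varphi a)$ and $t\in S(a,c)$. The label is $H(t)\ve{}(a)^{-1}V(s)$. Naturality gives $H(t)\ve{}(a)^{-1}=\ve{}(c)^{-1}(V\varphi)(t)$, so the label equals $\ve{}(c)^{-1}V(\varphi(t)s)=\varphi^*(d,c)(\varphi(t)s)$. The normalizing factor is now $\chi(S\{c\})$, since the target groupoid is $S$.

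The main obstacle is mostly bookkeeping. One has to check that the labels are constant along these equivalences, which is where naturality of $\ve{}$ and the commutativity of $G$ enter. One also has to confirm that the result does not depend on the choice of representatives $c,d$; this follows from Lemma~\ref{lemma:twosidedfibre} together with the fact that changing representatives changes the hom-sets by a bijection compatible with $\varphi_*$ and $\varphi^*$.
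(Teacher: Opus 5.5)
Your proposal is correct and follows the paper's proof essentially step for step. The paper likewise notes that $\dqt cSd$ has at most one morphism between any two objects, so it is equivalent to its component set. It then identifies $\pi_0(\dqt cSd)$ with $T(\varphi(c),d)$ via $(s,a,t)\mapsto t\varphi(s)$, uses naturality of $\ve{}$ to rewrite the label as $V(t\varphi(s))\ve{}(c)$, and treats $(\varphi,\ve{}^{-1})^*$ by the mirror argument. One small correction: commutativity of $G$ is not actually used in these label computations, since naturality of $\ve{}$ alone suffices.
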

\begin{proof}
Let $c$ be an object of $S$ and $d$ an object of $T$.  In
$(\varphi,\ve{})_*$, the objects of
$\dqt cSd$ are triples $(s,a,t)$ where $a \in \Ob{S}$, $s \in S(c,a)$,
$t \in T(\varphi(a),d)$. A \m\ $(s_1,a_1,t_1) \to (s_2,a_2,t_2)$ in
$\dqt cSd$ is a \m\ $m \in S(a_1,a_2)$ such that $s_2=ms_1$ in $S$ and
$t_1=t_2\varphi(m)$ in $T$. We note that $\dqt cSd$ is equivalent to
its component set since there is at most one \m\ from one object to
another. Indeed, $(s_1,a_1,t_1)$ and  $(s_2,a_2,t_2)$ are isomorphic
if and only if $t_1\varphi(s_1)=t_2\varphi(s_2)$. The diagram
 \begin{center}
    \begin{tikzpicture}[double distance = 1.5pt]
       \matrix (dia) [matrix of math nodes, column sep=30pt, row
       sep=40pt]{
      \pi_0( \dqt cSd) && T(\varphi(c),d) \\
      & G \\};
\draw[->] (dia-1-1) -- node[above]{$(s,a,t) \to t \varphi(s)$} node[below]{$\simeq$}(dia-1-3); 
\draw[->] (dia-1-1) -- node[below,sloped]{$\dqt c{\ve{}}d$}(dia-2-2);
\draw[->] (dia-1-3) -- node[below,sloped]{$ V(t) \ve{}(c) \leftarrow t
  $}(dia-2-2);
\end{tikzpicture}
\end{center}
commutes since $(\dqt c{\ve{{}}}d)(s,a,t) = V(t) \ve{}(a) H(s) = V(t)
  V(\varphi(s)) \ve{}(c) = V(t \varphi(s)) \ve{}(c)$. This shows that
  $\pi_0(\dqt cSd) \{\dqt c{\ve{}}d = g\} = \varphi_*(c,d)^{-1}(g)$.

The proof for $(\varphi,\ve{})^*$ is similar. The crucial point is that
the diagram
  \begin{center}
    \begin{tikzpicture}[double distance = 1.5pt]
       \matrix (dia) [matrix of math nodes, column sep=30pt, row
       sep=40pt]{
      \pi_0( \dqt dSc) && T(\varphi(c),d) \\
      & G \\};
\draw[->] (dia-1-1) -- node[above]{$(t,a,s) \to \varphi(s)t$} node[below]{$\simeq$}(dia-1-3); 
\draw[->] (dia-1-1) -- node[below,sloped]{$\dqt d{\ve{}}c$}(dia-2-2);
\draw[->] (dia-1-3) -- node[below,sloped]{$ \ve{}(c)^{-1}  V(t) \leftarrow t
  $}(dia-2-2);
\end{tikzpicture}
\end{center}
commutes since $(\dqt d{\ve{}}c)(t,a,s) = H(s) \ve{}(a)^{-1} V(t) =
\ve{}(c)^{-1}V(\varphi(s))V(t) = \ve{}(c)^{-1}V(\varphi(s)t)$. This
shows that
$\pi_0(\dqt dSc)\{ \dqt d{\ve{}}c = g\} = \varphi^*(d,c)^{-1}(g)$.
\end{proof}


For any finite subgroup $U$ of $G$, the average sum in $\Q G$ of its elements
\begin{equation}
  \label{eq:defnUbar}
  \overline{U}= \frac{1}{|U|} \sum_{g \in U} g
\end{equation}
is an idempotent, $\overline{U}^2 = \overline{U}$, in 
$\Q G$.

Consider the $G$-spans of Figure~\ref{fig:Gspans} where $H$, $L$, $R$,
$V$ are groupoid \m s and $e \colon \Ob{S} \to G$,
$e \colon \Ob{T} \to G$, are constant functions onto the identity
element $e$ of $G$. We write $[M,\ve{}]$ for the matrix of the middle,
general, $G$-span. The two other $G$-spans are
$(\mathrm{id}_S,e)_*=(\mathrm{id}_S,e)^*$ and
$(\mathrm{id}_T,e)_*=(\mathrm{id}_T,e)^*$ defined by identity \m s
over $BG$.

\begin{figure}[t]
  \centering
    \begin{tikzpicture}[double distance = 1.5pt]
       \matrix (dia) [matrix of math nodes, column sep=45pt, row
       sep=25pt]{
  S & S & M & T & T & T \\
  S & BG & S & BG & T &BG\\};
\draw[double] (dia-1-1) -- (dia-1-2);
\draw[double] (dia-1-1) -- (dia-2-1);
\draw[->] (dia-1-2) -- node[right]{$H$}(dia-2-2);
\draw[->] (dia-2-1) -- node[below]{$H$} (dia-2-2);
\path (dia-2-1) -- node[midway]{$H \xRightarrow{e} H$} (dia-1-2); 
\draw[->](dia-1-3) -- node[above]{$R$}(dia-1-4);
\draw[->](dia-2-3) -- node[below]{$H$}(dia-2-4);
\draw[->](dia-1-3) --node[left]{$L$} (dia-2-3);
\draw[->](dia-1-4) -- node[right]{$V$}(dia-2-4);
\path (dia-2-3) -- node[midway]{$HL \xRightarrow{\ve{}} VR$} (dia-1-4);
\draw[double] (dia-1-5) -- (dia-2-5);
\draw[double] (dia-1-5) -- (dia-1-6);
\draw[->] (dia-2-5) -- node[below]{$V$} (dia-2-6);
\draw[->] (dia-1-6) -- node[right]{$V$} (dia-2-6);
\path (dia-2-5) -- node[midway]{$V \xRightarrow{e} V$} (dia-1-6);
 \end{tikzpicture}
  \caption{Three $G$-spans}
\label{fig:Gspans}
\end{figure}

\begin{lemma} \label{lemma:idempotentmatrix}
  $[(\mathrm{id}_S,e)_*]$ is an idempotent diagonal matrix over $\Q G$ with
  diagonal entries
  \begin{equation*}
    [(\mathrm{id}_S,e)_*] (c,c)  = \overline{HS(c,c)}, \qquad c \in \pi_0(S)
  \end{equation*}
\end{lemma}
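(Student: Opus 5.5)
The plan is to read off the entries from Proposition~\ref{prop:phi*}, applied with $T=S$, $V=H$, $\varphi=\mathrm{id}_S$ and $\ve{}=e$ the constant function at the identity. That proposition gives
\begin{equation*}
[(\mathrm{id}_S,e)_*](c,d)=\chi(S\{d\})\sum_{g\in G}|\varphi_*(c,d)^{-1}(g)|\,g ,
\end{equation*}
where $\varphi_*(c,d)\colon S(c,d)\to G$ is $t\mapsto H(t)e=H(t)$.

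First I would treat the off-diagonal entries. If $c\neq d$ in $\pi_0(S)$, then $S(c,d)=\emptyset$, so every fibre $\varphi_*(c,d)^{-1}(g)$ is empty and the entry vanishes. The matrix is therefore diagonal.

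Next I would compute the diagonal entry at $c$. Here $\chi(S\{c\})=|S(c,c)|^{-1}$, and $\varphi_*(c,c)=H\colon S(c,c)\to G$ is a group homomorphism with image $HS(c,c)$. Each fibre over a point of the image is a coset of $\ker H$, so it has size $|S(c,c)|/|HS(c,c)|$, while fibres over points outside the image are empty. Substituting,
\begin{equation*}
[(\mathrm{id}_S,e)_*](c,c)=|S(c,c)|^{-1}\cdot\frac{|S(c,c)|}{|HS(c,c)|}\sum_{g\in HS(c,c)}g=\overline{HS(c,c)} .
\end{equation*}

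Finally, idempotence follows because a diagonal matrix squares entrywise, and $\overline{U}^2=\overline{U}$ by \eqref{eq:defnUbar}. Alternatively, $(\mathrm{id}_S,e)_*$ composed with itself is equivalent to itself via Theorem~\ref{thm:main}, but the direct computation is simpler. The only delicate point is the fibre count, and that is just the standard coset argument for a homomorphism. I should also check that the entries are well defined on $\pi_0(S)$: conjugate automorphism groups have the same image under $H$ because $G$ is abelian.
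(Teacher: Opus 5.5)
Your proposal is correct and follows the paper's proof. Like the paper, you specialize Proposition~\ref{prop:phi*} to $\varphi=\mathrm{id}_S$, $\ve{}=e$, observe that the off-diagonal entries vanish, and count the fibres of $H\colon S(c,c)\to G$ (cosets of $\ker H$) to obtain $\overline{HS(c,c)}$; your idempotence argument via $\overline{U}^2=\overline{U}$ and the well-definedness check simply spell out what the paper leaves implicit.
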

\begin{proof}
  This is the matrix of the $G$-span $(\mathrm{id}_S,e)_*$ of
  Proposition~\ref{prop:phi*} which in the diagonal has
\begin{equation*}
  [(\mathrm{id}_S,e)_*](c,c) = \sum_{g \in G} \frac{1}{|S(c,c)|} |\{
  s \in S(c,c) \mid H(s) = g \}| =
 \sum_{g \in HS(c,c)} \frac{1}{|HS(c,c)|}g = \overline{HS(c,c)}
\end{equation*}
while the off-diagonal elements are $0$.
\end{proof}



\begin{rmk}\label{rmk:diagonalidempotent}
  The diagonal idempotent matrix of
  Lemma~\ref{lemma:idempotentmatrix} and its complex companion
\begin{equation*}
  [(\mathrm{id}_S,e)_*] = \mathrm{diag}(\overline{HS(c,c)})_{c \in \pi_0(S)} 
\qquad
  [[(\mathrm{id}_S,e)_*]] = \rho([(\mathrm{id}_S,e)_*])
\end{equation*}
 may not be identity matrices unless $HS(c,c)$ is the
trivial group for all objects $c$ of $S$. For instance, if
$G \leq U(1)$ is a finite cyclic group of complex units,
\begin{equation*}
  [[(\mathrm{id}_S,e)_*]](c,c) = \rho( \overline{HS(c,c)}) =
  \begin{cases}
    1 & |HS(c,c)| =1 \\
    0 & |HS(c,c)| > 1 
  \end{cases} \qquad c \in \pi_0(S)
\end{equation*}
so $[[(\mathrm{id}_S,e)_*,e]]$ is a diagonal matrix with $0$s or $1$s in the diagonal.
\end{rmk}


Consider two $G$-spans
 \begin{center} 
  \begin{tikzpicture}
         \matrix (dia) [matrix of math nodes, column sep=60pt, row sep=40pt]{
      M_1 & T\\
      S & BG \\};
    \draw[->] (dia-1-1) --  node[above]{$R_1$} (dia-1-2);
     \draw[->] (dia-1-1) --  node[left]{$L_1$} (dia-2-1);
      \draw[->] (dia-1-2) --  node(V)[right]{$V$} (dia-2-2);
      \draw[->] (dia-2-1) --  node[below]{$H$} (dia-2-2);
      \path (dia-2-1) -- node[midway]{$HL_1\stackrel{\ve 1}{\implies}VR_1$}(dia-1-2); 
  \end{tikzpicture}
  \qquad
\begin{tikzpicture}
         \matrix (dia) [matrix of math nodes, column sep=60pt, row sep=40pt]{
      M_2 & T\\
      S & BG \\};
    \draw[->] (dia-1-1) --  node[above]{$R_2$} (dia-1-2);
     \draw[->] (dia-1-1) --  node[left]{$L_2$} (dia-2-1);
      \draw[->] (dia-1-2) --  node(V)[right]{$V$} (dia-2-2);
      \draw[->] (dia-2-1) --  node[below]{$H$} (dia-2-2);
      \path (dia-2-1) -- node[midway]{$HL_2\stackrel{\ve 2}{\implies}VR_2$}(dia-1-2); 
  \end{tikzpicture}
  \end{center}
from $S \xrightarrow{H} BG$ to $T \xrightarrow{V} BG$.

\begin{defn}\label{defn:2cell}
  A $G$-span \m\  from the $G$-span $M_1$ to the $G$-span $M_2$ is a triple $(A,\Phi,B)$ of a functor
  $\Phi \colon M_1 \to M_2$ and two natural transformations
  $A \colon L_1 \implies L_2 \Phi$, $B \colon R_1 \implies R_2\Phi$
  such that $V(Bx)\ve 1(x) = \ve 2(\Phi x)H(Ax)$ for all
  $x \in \Ob{M_1}$.
\end{defn}

The data of the $G$-span \m\  $(A,\Phi,B) \colon M_1 \implies M_2$, sometimes graphically presented as 
 \begin{center}
      \begin{tikzpicture}[double distance = 1.5pt]
       \matrix (dia) [matrix of math nodes, column sep=40pt, row
       sep=25pt]{
S \xrightarrow{H} BG & T \xrightarrow{V} BG \\};
 \draw[->] (dia-1-1) to[bend left=45] node[above]{$M_1$} (dia-1-2);
  \draw[->] (dia-1-1) to[bend right=45]
  node[below]{$M_2$}(dia-1-2);
   \node[yshift=15pt]  (A) at ($(dia-1-1)!0.5!(dia-1-2)$) {};
  \node[yshift=-15pt]  (B) at ($(dia-1-1)!0.5!(dia-1-2)$) {};
   \draw[double, -implies] (A) -- node[right]{$\Phi$} (B);
\end{tikzpicture}
\end{center}
inhabit the diagram 
\begin{center}
      \begin{tikzpicture}[double distance = 1.5pt]
       \matrix (dia) [matrix of math nodes, column sep=40pt, row
       sep=25pt]{
& S &&& S \\
BG && M_1 & M_2 && BG \\
& T &&& T \\};
\draw[double] (dia-1-2) -- (dia-1-5);
\draw[double] (dia-3-2) -- (dia-3-5);

\draw[->](dia-2-3) --node[above,sloped,pos=0.3]{$L_1$} (dia-1-2);
\draw[->](dia-2-3) --node[below,sloped,pos=0.3]{$R_1$} (dia-3-2);

\draw[->] (dia-1-2) --node[above,sloped]{$H$} (dia-2-1);
\draw[->] (dia-3-2) --node[below,sloped]{$V$} (dia-2-1);

\path (dia-2-1) -- node[midway]{$HL_1\stackrel{\ve 1}{\implies}VR_1$}(dia-2-3);
\draw[->](dia-2-4) --node[above,sloped,pos=0.3]{$L_2$} (dia-1-5);
\draw[->](dia-2-4) --node[below,sloped,pos=0.3]{$R_2$} (dia-3-5);

\draw[->] (dia-1-5) --node[above,sloped]{$H$} (dia-2-6);
\draw[->] (dia-3-5) --node[below,sloped]{$V$} (dia-2-6);

\path (dia-2-4) -- node[midway]{$HL_2\stackrel{\ve 2}{\implies}VR_2$}(dia-2-6);

\draw[->] (dia-2-3) --node[above]{$\Phi$} (dia-2-4);
\node[yshift=25pt]  at ($(dia-2-3)!0.5!(dia-2-4)$) {$L_1\stackrel{A}{\implies}L_2\Phi$};
\node[yshift=-25pt]  at ($(dia-2-3)!0.5!(dia-2-4)$) {$R_1\stackrel{B}{\implies}R_2\Phi$};
\end{tikzpicture}
\end{center}
where 
\begin{center}
      \begin{tikzpicture}[double distance = 1.5pt]
       \matrix (dia) [matrix of math nodes, column sep=30pt, row
       sep=25pt]{
   HL_1 & VR_1 \\ HL_2\Phi & VR_2\Phi \\};
  \draw[double,-implies] (dia-1-1) -- node[above]{$\ve 1$} (dia-1-2);
  \draw[double,-implies] (dia-2-1) -- node[below]{$\ve 2$} (dia-2-2);
  \draw[double,-implies] (dia-1-1) -- node[left]{$HA$} (dia-2-1);
  \draw[double,-implies] (dia-1-2) -- node[right]{$VB$} (dia-2-2);
\end{tikzpicture}
\end{center}
is a commutative diagram of natural transformations between groupoid
\m s
from $M_1$ to $BG$.

The functor $\Phi \colon M_1 \to M_2$ induces a functor
$\dqt c{\Phi}d \colon \dqt c{M_1}d \to \dqt c{M_2}d$ of two-sided
homotopy fibres over $c \in \pi_0(S)$ and $d \in \pi_0(T)$ with the
effect
\begin{equation*}
  (c \xrightarrow{\alpha} L_1x, x, R_1x \xrightarrow{\beta} d) \to
  (c \xrightarrow{\alpha} L_1x \xrightarrow{Ax} L_2\Phi x, \Phi x, 
  R_2\Phi x \xrightarrow{(Bx)^{-1}} R_1x \xrightarrow{\beta} d) 
\end{equation*}
on objects. Because
\begin{multline*}
  (\dqt c{\ve 2}d) (c \xrightarrow{\alpha} L_1x \xrightarrow{Ax} L_2\Phi x, \Phi x, 
  R_2\Phi x \xrightarrow{(Bx)^{-1}} R_1x \xrightarrow{\beta} d)\\ =
 V(\beta \circ  (Bx)^{-1}) \ve 2(\Phi x) H(Ax \circ \alpha) 
  = V(\beta) V(Bx)^{-1} \ve 2(\Phi x) H(Ax) H(\alpha) =
 V(\beta) \ve 1(x) H(\alpha) \\=
(\dqt c{\ve 1}d)(c \xrightarrow{\alpha} L_1x, x, R_1x \xrightarrow{\beta} d) 
\end{multline*}
the functor $\dqt c{\Phi}d$ retricts to
functors of retricted two-sided homotopy fibres
\begin{equation*}
  (\dqt c{M_1}d)\{ \dqt c{\ve 1}d =g \} \xrightarrow{\dqt c{\Phi}d}
  (\dqt
  c{M_2}d)\{ \dqt c{\ve 2}d =g \}
\end{equation*}
for all $g \in G$.

The vertical composition of the $G$-span \m s  $
M_1 \stackrel{(A_1,\Phi_1,B_1)}{\implies} M_2
\stackrel{(A_2,\Phi_2,B_2)}{\implies} M_3$ between the $G$-spans $
(S \xrightarrow{H} BG) \xrightarrow{M_1,M_2,M_3} (T \xrightarrow{V} BG)$
is defined to be the $G$-span \m\
\begin{equation}\label{eq:vertical}
  (A_2,\Phi_2,B_2) \ast (A_1,\Phi_1,B_1) =
  (A_2\Phi_1 \circ
A_1, \Phi_2 \circ \Phi_1, B_2 \Phi_1 \circ B_1) \colon M_1 \implies M_3
\end{equation}
This is indeed a
$G$-span \m\ from $M_1$ to $M_3$ because
\begin{multline*}
  V(B_2\Phi_1(x) \circ B_1x) \ve 1(x) = V(B_2\Phi_1(x))V(B_1x)\ve 1(x) =
  V(B_2\Phi_1(x)) \ve 2(\Phi_1x) H(A_1x) \\ = \ve
  3(\Phi_2\Phi_1x)H(A_2(\Phi_1x))H(A_1x)
 =\ve
  3(\Phi_2\Phi_1x)H(A_2(\Phi_1x) \circ A_1x)
\end{multline*}
for all $x \in \Ob{M_1}$. The diagram, from which we may extract the composable natural
transformations $L_1\stackrel{A_1}{\implies}L_2\Phi_1\stackrel{A_2\Phi_1}{\implies}L_3\Phi_2\Phi_1$
and $R_1\stackrel{B_1}{\implies}R_2\Phi_1\stackrel{B_2\Phi_1}{\implies}R_3\Phi_2\Phi_1$,
\begin{center}
      \begin{tikzpicture}[double distance = 1.5pt]
       \matrix (dia) [matrix of math nodes, column sep=90pt, row
       sep=35pt]{
& S & S & S \\
BG & M_1 & M_2 & M_3 & BG \\
& T & T & T \\};
\draw[double] (dia-1-2) -- (dia-1-3);
\draw[double] (dia-1-3) -- (dia-1-4);
\draw[double] (dia-3-2) -- (dia-3-3);
\draw[double] (dia-3-3) -- (dia-3-4);
\draw[->] (dia-2-2) -- node[left]{$L_1$} (dia-1-2);
\draw[->] (dia-2-3) -- node[left]{$L_2$} (dia-1-3);
\draw[->] (dia-2-4) -- node[right]{$L_3$} (dia-1-4);
\draw[->] (dia-2-2) -- node[left]{$R_1$} (dia-3-2);
\draw[->] (dia-2-3) -- node[left]{$R_2$} (dia-3-3);
\draw[->] (dia-2-4) -- node[right]{$R_3$} (dia-3-4);

\draw[->] (dia-2-2) --node[above]{$\Phi_1$} (dia-2-3);
\draw[->] (dia-2-3) --node[above]{$\Phi_2$} (dia-2-4);

\path (dia-2-2) -- node[midway]{$L_1 \stackrel{A_1}{\implies} L_2\Phi_1$}(dia-1-3);
\path (dia-2-3) -- node[midway]{$L_2 \stackrel{A_2}{\implies} L_3\Phi_2$}(dia-1-4);
\path (dia-2-2) -- node[midway]{$R_1 \stackrel{B_1}{\implies} R_2\Phi_1$}(dia-3-3);
\path (dia-2-3) -- node[midway]{$R_2\stackrel{B_2}{\implies}R_3\Phi_2$}(dia-3-4);
\path (dia-2-1) -- node[midway]{$HL_1 \stackrel{\ve 1}{\implies} VR_1$}(dia-2-2);
\path (dia-2-4) -- node[midway]{$HL_3\stackrel{\ve 3}{\implies}VR_3$}(dia-2-5);

\draw[->] (dia-1-2) -- node[above,sloped]{$H$}(dia-2-1);
\draw[->] (dia-3-2) -- node[below,sloped]{$V$}(dia-2-1);

\draw[->] (dia-1-4) -- node[above,sloped]{$H$}(dia-2-5);
\draw[->] (dia-3-4) -- node[below,sloped]{$V$}(dia-2-5);
\end{tikzpicture}
\end{center}
exhibits the ingredients of the vertical composition of the two $G$-span \m s. (The natural
transformation $HL_2 \stackrel{\ve 2}{\implies} VR_2$ of the $G$-span $M_2$ has been suppressed.)


\begin{exmp}
  Any $G$-span $M \in \mathbf{span}_G(S \xrightarrow{H} BG,  T
\xrightarrow{V} BG)$ admits an obvious $G$-span \m\ $(e,\Phi,e)$ to
the homotopy pull-back $S \times_{BG} T \in  \mathbf{span}_G(S \xrightarrow{H} BG,  T
\xrightarrow{V} BG)$
\begin{center}
      \begin{tikzpicture}[double distance = 1.5pt]
       \matrix (dia) [matrix of math nodes, column sep=30pt, row
       sep=25pt]{
& S &&& S \\
BG && M & S \times_{BG} T && BG \\
& T &&& T \\};
\draw[double] (dia-1-2) -- (dia-1-5);
\draw[double] (dia-3-2) -- (dia-3-5);

\draw[->](dia-2-3) --node[above,sloped]{$L$} (dia-1-2);
\draw[->](dia-2-3) --node[below,sloped]{$R$} (dia-3-2);

\draw[->] (dia-1-2) --node[above,sloped]{$H$} (dia-2-1);
\draw[->] (dia-3-2) --node[below,sloped]{$V$} (dia-2-1);

\path (dia-2-1) -- node[midway]{$HL\stackrel{\ve{}}{\implies}VR$}(dia-2-3);
\draw[->](dia-2-4) --node[above,sloped]{$p_1$} (dia-1-5);
\draw[->](dia-2-4) --node[below,sloped]{$p_2$} (dia-3-5);

\draw[->] (dia-1-5) --node[above,sloped]{$H$} (dia-2-6);
\draw[->] (dia-3-5) --node[below,sloped]{$V$} (dia-2-6);

\path (dia-2-4) -- node[midway]{$(x,g,y) \to g$}(dia-2-6);

\draw[->] (dia-2-3) --node[above]{$\Phi$} (dia-2-4);
\node[yshift=25pt]  at ($(dia-2-3)!0.5!(dia-2-4)$) {$L\stackrel{e}{=}p_1\Phi$};
\node[yshift=-25pt]  at ($(dia-2-3)!0.5!(dia-2-4)$) {$R\stackrel{e}{=}p_2\Phi$};
\end{tikzpicture}
\end{center}
where $\Phi(x)=(Lx,\ve{}x,Rx)$ for all $x \in \Ob{M}$. In
the homotopy pull-back,  $\Ob{S \times_{BG} T} =\Ob{S} \times G
\times \Ob{T}$ and $p_1(x,g,y)=x$, $p_2(x,g,y)=y$.
\end{exmp}

\begin{exmp}
 There are $G$-span \m s
\begin{center}
      \begin{tikzpicture}[double distance = 1.5pt]
       \matrix (dia) [matrix of math nodes, column sep=40pt, row
       sep=25pt]{
S \xrightarrow{H} BG & S \xrightarrow{H} BG \\};
 \draw[->] (dia-1-1) to[bend left=45] node[above]{$(\mathrm{id}_S,e)_*$} (dia-1-2);
  \draw[->] (dia-1-1) to[bend right=45]
  node[below]{$(\varphi,\ve{})_* \times_T
    (\varphi,\ve{}^{-1})^*$}(dia-1-2);
   \node[yshift=15pt]  (A) at ($(dia-1-1)!0.5!(dia-1-2)$) {};
  \node[yshift=-15pt]  (B) at ($(dia-1-1)!0.5!(dia-1-2)$) {};
   \draw[double, -implies] (A) -- (B);
\end{tikzpicture}
\qquad
      \begin{tikzpicture}[double distance = 1.5pt]
       \matrix (dia) [matrix of math nodes, column sep=40pt, row
       sep=25pt]{
T \xrightarrow{V} BG & T \xrightarrow{V} BG \\};
 \draw[->] (dia-1-1) to[bend left=45] node[above]{$(\mathrm{id}_T,e)_*$} (dia-1-2);
  \draw[->] (dia-1-1) to[bend right=45]
  node[below]{$
    (\varphi,\ve{}^{-1})^* \times_S (\varphi,\ve{})_* $}(dia-1-2);
   \node[yshift=15pt]  (A) at ($(dia-1-1)!0.5!(dia-1-2)$) {};
  \node[yshift=-15pt]  (B) at ($(dia-1-1)!0.5!(dia-1-2)$) {};
   \draw[double, -implies] (B) -- (A);
\end{tikzpicture}
\end{center}
based on the functors $S \to S \times_T S \colon x \to
(x,\mathrm{id}_{\varphi x}, x)$, $S \times_S S \to T \colon
(x,s,x) \to \varphi x$. It is essential that $(\ve{} \times_T \ve{}^{-1})
(x,\mathrm{id}_{\varphi x}, x) =\ve{}(x)^{-1} \ve{}(x)= e$ and
$(\ve{}^{-1} \times_S \ve{})(x,s,x) =
\ve{}(x)H(s)\ve{}(x)^{-1}=V(\varphi x)$. 
\end{exmp}


The matrix $[M,\ve{}]$ of any $G$-span $M \in \mathbf{span}_G(S
\xrightarrow{H} BG, T \xrightarrow{V} BG)$
 will always lie in the $(+1)$-eigenspaces of
left-multiplication with $[(\mathrm{id}_S,e)_*]$ and
right-multiplication with $[(\mathrm{id}_T,e)_*]$.

\begin{prop}\label{prop:restrictM}
The matrices of the $G$-spans of Figure~\ref{fig:Gspans} satisfy the equations
\begin{equation*}
[(\mathrm{id}_S,e)_*] [M,\ve{}] = [M,\ve{}]  =  [M,\ve{}] [(\mathrm{id}_T,e)_*] 
\end{equation*}
\end{prop}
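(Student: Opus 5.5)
The plan is to compute both products entrywise and reduce each equation to a symmetry of the two-sided homotopy fibres. By Lemma~\ref{lemma:idempotentmatrix}, $[(\mathrm{id}_S,e)_*]$ is diagonal with entries $\overline{HS(c,c)}$. Using the product convention of Remark~\ref{rmk:nonabelianG} (which is immaterial, since $G$ is abelian), the left-hand equation becomes
\begin{equation*}
  ([(\mathrm{id}_S,e)_*][M,\ve{}])(c,d) = [M,\ve{}](c,d)\,\overline{HS(c,c)} .
\end{equation*}
So it suffices to show that $[M,\ve{}](c,d)\,h = [M,\ve{}](c,d)$ for every $h \in HS(c,c)$. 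Averaging over $h \in HS(c,c)$ then gives the claim. Equivalently, I must show that the coefficient function
\begin{equation*}
  g \mapsto \chi\big((\dqt cMd)\{\dqt c{\ve{}}d = g\}\big)
\end{equation*}
is constant on cosets $gHS(c,c)$. The right-hand equation reduces in the same way to invariance under multiplication by elements of $VT(d,d)$.

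The key step is an action of $S(c,c)$ on the groupoid $\dqt cMd$. For $\sigma \in S(c,c)$ I define a functor $\sigma^* \colon \dqt cMd \to \dqt cMd$ by
\begin{equation*}
  (s,a,t) \mapsto (s\sigma, a, t)
\end{equation*}
on objects, acting as the identity on morphisms $m \in M(a_1,a_2)$.

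\begin{itemize}
\item $\sigma^*$ is well defined: a morphism $m$ satisfies $L(m)s_1 = s_2$ and $t_1 = t_2R(m)$, and these conditions are preserved after precomposing $s_1,s_2$ with $\sigma$.
\item $\sigma^*$ is an isomorphism of groupoids, with inverse $(\sigma^{-1})^*$.
\item Since $G$ is abelian, $(\dqt c{\ve{}}d)(s\sigma,a,t) = V(t)\ve{}(a)H(s)H(\sigma) = (\dqt c{\ve{}}d)(s,a,t)\,H(\sigma)$.
\end{itemize}

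Hence $\sigma^*$ restricts to an isomorphism
\begin{equation*}
  (\dqt cMd)\{\dqt c{\ve{}}d = g\} \cong (\dqt cMd)\{\dqt c{\ve{}}d = gH(\sigma)\}.
\end{equation*}
Isomorphic finite groupoids have equal Euler characteristics, so the coefficients of $g$ and $gH(\sigma)$ in $[M,\ve{}](c,d)$ agree. The factor $\chi(T\{d\})$ is independent of $g$. This gives $[M,\ve{}](c,d)\,H(\sigma) = [M,\ve{}](c,d)$, as required.

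The right-hand equation is handled symmetrically. By Lemma~\ref{lemma:idempotentmatrix} applied to $T$, the matrix $[(\mathrm{id}_T,e)_*]$ is diagonal with entries $\overline{VT(d,d)}$. For $\tau \in T(d,d)$ I use postcomposition $(s,a,t) \mapsto (s,a,\tau t)$. This respects the morphism condition $t_1 = t_2R(m)$ and multiplies the label by $V(\tau)$. The factor $\chi(T\{d\})$ is unaffected.

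I do not expect a serious obstacle. The one point needing care is checking that the re-labelling maps really are groupoid isomorphisms compatible with the morphism conditions in Definition~\ref{defn:funddefn}.\eqref{funddefn:2}, and that commutativity of $G$ is exactly what makes the label multiply by $H(\sigma)$ (respectively $V(\tau)$) on the correct side. An alternative route would use Theorem~\ref{thm:main} together with an equivalence of $G$-spans $(\mathrm{id}_S,e)_*\times_S M \simeq M$. That would require proving invariance of the matrix under equivalence, so the direct argument is preferable.
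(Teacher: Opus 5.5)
Your proof is correct, but it is not the route the paper takes.

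\textbf{The paper's route.} The paper argues structurally. By Theorem~\ref{thm:main}, $[(\mathrm{id}_S,e)_*][M,\ve{}]$ is the matrix of the composite $G$-span $S\times_S M$, whose label is $(e\times_S\ve{})(x_1,s,x_2)=\ve{}(x_2)H(s)$. The projection $p_2\colon S\times_S M\to M$ has quasi-inverse $x\mapsto(Lx,\mathrm{id}_{Lx},x)$, and the paper promotes both functors to $G$-span morphisms. These give label-preserving equivalences of the restricted two-sided homotopy fibres, and equivalent groupoids have equal Euler characteristics. So the ``invariance under equivalence'' you wanted to avoid is exactly what the paper uses. It is cheap, needing only Leinster's invariance of $\chi$ under equivalence; the real input is Theorem~\ref{thm:main}.

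\textbf{Your route.} You combine the diagonal form from Lemma~\ref{lemma:idempotentmatrix} with an explicit symmetry. The groups $S(c,c)$ and $T(d,d)$ act on $\dqt cMd$ by pre- and postcomposition, shifting the label by $H(\sigma)$ on the right and by $V(\tau)$ on the left.

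\textbf{Comparison.} Your argument is more elementary and self-contained: no span composition and no pull-back bookkeeping. It also proves directly the reformulation stated after the proposition, $[M,\ve{}](c,d)\in\overline{VT(d,d)}\cdot\Q G\cdot\overline{HS(c,c)}$, and shows where that restriction on values comes from. The paper's route instead explains the result conceptually. Up to equivalence, $(\mathrm{id}_S,e)_*$ is the identity $1$-morphism of $\mathbf{span}_G$, so its matrix must act as a unit on every matrix in the image of the functor to $\mathbf{mat}_R$.

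\textbf{One small correction.} You do not actually use commutativity of $G$ where you invoke it, since $H(s\sigma)=H(s)H(\sigma)$ is just functoriality of $H$. With the product convention of Remark~\ref{rmk:nonabelianG}, the two products are $[M,\ve{}](c,d)\,\overline{HS(c,c)}$ and $\overline{VT(d,d)}\,[M,\ve{}](c,d)$. These are exactly the sides on which your relabelling acts, so your argument goes through unchanged for non-abelian $G$.
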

\begin{proof}

The composition  of the $G$-spans $(\mathrm{id}_S,e)_*$ and $M$ is
the $G$-span
\begin{center}
      \begin{tikzpicture}[double distance = 1.5pt]
       \matrix (dia) [matrix of math nodes, column sep=30pt, row
       sep=25pt]{
S \times_S M & T \\
S & BG \\};
\draw[->](dia-1-1) -- node[above]{$Rp_2$}(dia-1-2);
\draw[->](dia-2-1) -- node[below]{$H$}(dia-2-2);

\draw[->](dia-1-1) -- node[left]{$p_1$}(dia-2-1);
\draw[->](dia-1-2) -- node[right]{$V$}(dia-2-2);

\path (dia-2-1) --node[midway]{$e \times_S \ve{}$} (dia-1-2);
\end{tikzpicture}
\end{center}
derived from the diagram
\begin{center}
      \begin{tikzpicture}[double distance = 1.5pt]
       \matrix (dia) [matrix of math nodes, column sep=30pt, row
       sep=25pt]{
S \times_S M & M & T \\
S & S & BG \\
S & BG \\};
\draw[->](dia-1-1) -- node[above]{$p_2$} node[below]{$\simeq$}(dia-1-2);
\draw[->](dia-1-2) -- node[above]{$R$} (dia-1-3);
\draw[double](dia-2-1) -- (dia-2-2);
\draw[->](dia-2-2) -- node[below]{$H$} (dia-2-3);
\draw[->](dia-3-1) -- node[below]{$H$}(dia-3-2);

\draw[->] (dia-1-1) --node[left]{$p_1$} (dia-2-1);
\draw[double] (dia-2-1) --(dia-3-1);
\draw[->] (dia-1-2) --node[left]{$L$} (dia-2-2);
\draw[->] (dia-2-2) --node[right]{$H$} (dia-3-2);
\draw[->] (dia-1-3) --node[right]{$V$} (dia-2-3);

\path (dia-3-1) --node[midway]{$e$} (dia-2-2);
\path (dia-2-2) --node[midway]{$\ve{}$} (dia-1-3);
\end{tikzpicture}
\end{center}
As $[(\mathrm{id}_S,e)_*][M,\ve {}] = [S \times_S M, e \times_S \ve{}]$ by
Theorem~\ref{thm:main}, it remains to show that $[S \times_S M, e
\times_S \ve{}] = [M,\ve{}]$.
The objects of $S \times_S M$ are $(x_1,s,x_2)$ for $x_1 \in \Ob{S}$,
$x_2 \in \Ob{M}$, $s \in S(x_1,Lx_2)$ and $(e \times_s
\ve{})(x_1,s,x_2) = \ve{}(x_2)H(s)$ by Definition~\ref{defn:funddefn}.\eqref{funddefn:4a}. 

The functor $p_2 \colon S \times_S M \to M$, which takes the object $(x_1,s,x_2)$
to $x_2$, is an equivalence: There is a functor
$q \colon M \to S \times_S M$, which on objects is the map
$x \to (Lx,\mathrm{id}_{Lx},x)$, and the two compositions are
isomorphic to identity functors. 
Both $q$ and $p_2$ can easily be
promoted to $G$-span \m s between $S \times_S M$ and $M$. These \m s
produce equivalences between  the restricted two-sided homotopy fibres
$(\dqt c{(S \times_S M)}d)\{\dqt c{\ve{}}d =g \}$
and
$(\dqt cMd) \{ \dqt c{(e \times_S \ve{})}d = g \}$.
As equivalent groupoids
have identical \Euc s \cite[Proposition~2.4]{leinster08},
$[S \times_S M, e
\times_S \ve{}] = [M,\ve{}]$ follows.
\end{proof}

We stress that 
Proposition~\ref{prop:restrictM}, which is equivalent to
\begin{align*}
   & [M,\ve{}](c,d) \in \overline{VT(d,d)} \cdot \Q G \cdot
  \overline{HS(c,c)} 
 \\
    &  [[M,\ve{}]](c,d) \in \rho(\overline{VT(d,d)}) \cdot \C \cdot
   \rho(\overline{HS(c,c)})
\end{align*}
for all $c \in \pi_0(S)$,  $d \in \pi_0(T)$,
restricts the possible values of the function
$[M,\ve{}]$.  For instance, if $G=C_6 \leq U(1)$ is cyclic of order
$6$ and $HS(c,c)=C_2$, $VT(d,d)=C_3$ for some $c \in \pi_0(S)$,
$d \in \pi_0(T)$, then
$[M,\ve{}](c,d) \in \Q  \cdot \overline{C_6}$ as
$\overline{C_2}\overline{C_3}=\overline{C_6} = \frac{1}{6}\sum_{i=0}^5
X^i$ in $\Q C_6 = \Z[X]/(X^6-1)$.

The consequence is even more drastic for the associated complex
matrices of \eqref{eq:[[M]]} when $G \leq U(1)$ as then
$\rho(\overline{HS(c,c)})=0$ ($\rho(\overline{VT(d,d)})=0$) 
for any non-trivial $HS(c,c)$ ($VT(d,d)$).

\begin{cor}\label{cor:drastic}
If $G
\leq U(1)$ is a finite group of complex roots of unity, then
$[[M,\ve{}]](c,d)=0$ unless both $HS(c,c)$ and $VT(d,d)$ are trivial groups.
\end{cor}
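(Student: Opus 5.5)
The corollary follows by applying the character $\rho$ to the identities of Proposition~\ref{prop:restrictM} and using the diagonal form of the idempotent matrices from Lemma~\ref{lemma:idempotentmatrix}.

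First, I would write the left identity entrywise. By Lemma~\ref{lemma:idempotentmatrix}, $[(\mathrm{id}_S,e)_*]$ is diagonal with entries $\overline{HS(c,c)}$, so $[M,\ve{}](c,d) = \overline{HS(c,c)}\,[M,\ve{}](c,d)$. The right identity gives $[M,\ve{}](c,d) = [M,\ve{}](c,d)\,\overline{VT(d,d)}$ in the same way. Here I would be careful with the product convention of Remark~\ref{rmk:nonabelianG}, but since $G$ is abelian the order of factors in $\Q G$ does not matter.

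Next, I would apply $\rho$. Here $\rho$ is the ring homomorphism $\Q G \to \C$ extending the inclusion $G \leq U(1)\subset \C$, and this gives
\begin{equation*}
[[M,\ve{}]](c,d) = \rho(\overline{HS(c,c)})\,[[M,\ve{}]](c,d)\,\rho(\overline{VT(d,d)}).
\end{equation*}
It then suffices to show that $\rho(\overline{U}) = 0$ for every nontrivial subgroup $U \leq G$. Since $\rho$ restricted to $U$ is the inclusion into $\C^\times$, and $U$ is a nontrivial finite group of roots of unity, $U$ is the group of $n$th roots of unity for some $n>1$. Their sum is $0$, either by the standard identity or by picking $u \in U$ with $u\neq 1$ and noting $u\sum U = \sum U$. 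Hence $\rho(\overline{U}) = \frac{1}{|U|}\sum_{g\in U} g = 0$. Consequently, if $HS(c,c)$ or $VT(d,d)$ is nontrivial, one factor on the right side vanishes and $[[M,\ve{}]](c,d)=0$.

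I do not expect a real obstacle. The only point needing care is that $\rho$ must be the character induced by the embedding $G\leq U(1)$, or more generally any character that is faithful on $G$, so that it restricts to a nontrivial character of each nontrivial subgroup. With the trivial character the conclusion fails. I would state this interpretation explicitly, matching Remark~\ref{rmk:diagonalidempotent}.
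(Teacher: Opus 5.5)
Your proposal is correct and follows essentially the same route as the paper: apply $\rho$ entrywise to the identities of Proposition~\ref{prop:restrictM}, using the diagonal idempotents $\overline{HS(c,c)}$ and $\overline{VT(d,d)}$ from Lemma~\ref{lemma:idempotentmatrix}, and then use that $\rho(\overline{U})=0$ for every nontrivial $U\leq G\leq U(1)$, as in Remark~\ref{rmk:diagonalidempotent}. Your explicit caveat that $\rho$ must be the character induced by the embedding $G\leq U(1)$ (or at least faithful on $G$) correctly spells out a hypothesis the paper leaves implicit.
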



\begin{rmk}[The strict $2$-category $\mathbf{span}_G$]
  Suppose that

 \begin{center}
      \begin{tikzpicture}[double distance = 1.5pt]
       \matrix (dia) [matrix of math nodes, column sep=30pt, row
       sep=25pt]{
     & M_2 & U \\
     M_1 & T & BG \\
     S & BG \\};
  \draw[->] (dia-1-2) -- node[above]{$R_2$}(dia-1-3);
 \draw[->] (dia-2-1) -- node[above]{$R_1$}(dia-2-2);
 \draw[->] (dia-2-2) -- node[below]{$V_1$}(dia-2-3);
 \draw[->] (dia-3-1) -- node[below]{$H_1$}(dia-3-2);

\draw[->] (dia-1-3) -- node[right]{$V_2$}(dia-2-3);
 \draw[->] (dia-1-2) -- node[left]{$L_2$}(dia-2-2);
\draw[->] (dia-2-1) -- node[left]{$L_1$}(dia-3-1);
\draw[->] (dia-2-2) -- node[right]{$V_1$}(dia-3-2);

\path (dia-3-1) -- node[midway]{$\ve 1$}(dia-2-2);
\path (dia-2-2) -- node[midway]{$\ve 2$} (dia-1-3);
\end{tikzpicture}
\qquad
 \begin{tikzpicture}[double distance = 1.5pt]
       \matrix (dia) [matrix of math nodes, column sep=30pt, row
       sep=25pt]{
     & M_2' & U \\
     M_1' & T & BG \\
     S & BG \\};
  \draw[->] (dia-1-2) -- node[above]{$R_2'$}(dia-1-3);
 \draw[->] (dia-2-1) -- node[above]{$R_1'$}(dia-2-2);
 \draw[->] (dia-2-2) -- node[below]{$V_1$}(dia-2-3);
 \draw[->] (dia-3-1) -- node[below]{$H_1$}(dia-3-2);

\draw[->] (dia-1-3) -- node[right]{$V_2$}(dia-2-3);
 \draw[->] (dia-1-2) -- node[left]{$L_2'$}(dia-2-2);
\draw[->] (dia-2-1) -- node[left]{$L_1'$}(dia-3-1);
\draw[->] (dia-2-2) -- node[right]{$V_1$}(dia-3-2);

\path (dia-3-1) -- node[midway]{$\ve 1'$}(dia-2-2);
\path (dia-2-2) -- node[midway]{$\ve 2'$} (dia-1-3);
\end{tikzpicture}
\qquad
 \begin{tikzpicture}[double distance = 1.5pt]
       \matrix (dia) [matrix of math nodes, column sep=30pt, row
       sep=25pt]{
     & M_2'' & U \\
     M_1'' & T & BG \\
     S & BG \\};
  \draw[->] (dia-1-2) -- node[above]{$R_2''$}(dia-1-3);
 \draw[->] (dia-2-1) -- node[above]{$R_1''$}(dia-2-2);
 \draw[->] (dia-2-2) -- node[below]{$V_1$}(dia-2-3);
 \draw[->] (dia-3-1) -- node[below]{$H_1$}(dia-3-2);

\draw[->] (dia-1-3) -- node[right]{$V_2$}(dia-2-3);
 \draw[->] (dia-1-2) -- node[left]{$L_2''$}(dia-2-2);
\draw[->] (dia-2-1) -- node[left]{$L_1''$}(dia-3-1);
\draw[->] (dia-2-2) -- node[right]{$V_1$}(dia-3-2);

\path (dia-3-1) -- node[midway]{$\ve 1''$}(dia-2-2);
\path (dia-2-2) -- node[midway]{$\ve 2''$} (dia-1-3);
\end{tikzpicture}
  \end{center}
are three pairs of composable $G$-spans. Let $M_1
\stackrel{(A_1,\Phi_1,B_1)}{\implies} M_1'$ and $M_2
\stackrel{(A_2,\Phi_2,B_2)}{\implies} M_2'$ be $G$-span \m s.
Their horizontal composition is the $G$-span \m\ 
\begin{equation}\label{eq:horizontal}
  (A_1,\Phi_1,B_1) \times_T (A_2,\Phi_2,B_2) =(A_1p_1,\Phi_1
\times_T \Phi_2, B_2p_2) \colon M_1 \times_T M_2 \implies  M_1' \times_T
M_2'
\end{equation}
graphically represented by
\begin{center}
      \begin{tikzpicture}[double distance = 1.5pt]
       \matrix (dia) [matrix of math nodes, column sep=40pt, row
       sep=25pt]{
S \xrightarrow{H_1} BG & T \xrightarrow{V_1} BG & U \xrightarrow{V_2} BG\\};
 \draw[->] (dia-1-1) to[bend left=45] node[above]{$M_1$} (dia-1-2);
  \draw[->] (dia-1-1) to[bend right=45]
  node[below]{$M_1'$}(dia-1-2);
   \node[yshift=15pt]  (A) at ($(dia-1-1)!0.5!(dia-1-2)$) {};
  \node[yshift=-15pt]  (B) at ($(dia-1-1)!0.5!(dia-1-2)$) {};
   \draw[double, -implies] (A) -- node[right]{$\Phi_1$}(B);

 \draw[->] (dia-1-2) to[bend left=45] node[above]{$M_2$} (dia-1-3);
  \draw[->] (dia-1-2) to[bend right=45]
  node[below]{$M_2'$}(dia-1-3);
   \node[yshift=15pt]  (C) at ($(dia-1-2)!0.5!(dia-1-3)$) {};
  \node[yshift=-15pt]  (D) at ($(dia-1-2)!0.5!(dia-1-3)$) {};
   \draw[double, -implies] (C) -- node[right]{$\Phi_2$}(D);
\end{tikzpicture}
\quad
  \begin{tikzpicture}[double distance = 1.5pt]
   \matrix (dia) [matrix of math nodes, column sep=40pt, row
       sep=25pt]{
S \xrightarrow{H_1} BG & U \xrightarrow{V_2} BG \\};
 \draw[->] (dia-1-1) to[bend left=45] node[above]{$M_1 \times_T M_2$} (dia-1-2);
  \draw[->] (dia-1-1) to[bend right=45]
  node[below]{$M_1' \times_T M_2'$}(dia-1-2);
   \node[yshift=15pt]  (A) at ($(dia-1-1)!0.5!(dia-1-2)$) {};
  \node[yshift=-15pt]  (B) at ($(dia-1-1)!0.5!(dia-1-2)$) {};
   \draw[double, -implies] (A) -- node[right]{$\Phi$}(B);
\end{tikzpicture}
\end{center}
where the functor $\Phi= \Phi_1 \times_T \Phi_2 \colon M_1 \times_T
M_2 \to M_1' \times_T M_2'$
is given by
\begin{equation*}
  (\Phi_1 \times_T \Phi_2)(x_1,t,x_2) = (\Phi_1x_1, A_2x_2 \circ t
  \circ B_1x_1^{-1}, \Phi_2x_2), \qquad
  x_1 \in \Ob{M_1},\, t \in T(R_1x_1,L_2x_2),\, x_2 \in \Ob{M_2}
\end{equation*}
on objects. Due to the \m s $R_1'\Phi_1x_1 \xleftarrow{B_1x_1} R_1x_1
\xrightarrow{t} L_2x_2 \xrightarrow{A_2x_2} L_2'\Phi_2x_2$ in $T$,
the right hand side is an object of $M_1' \times_T M_2'$.
 Since
\begin{multline*}
  V_2(B_2x_2) (\ve 1 \times_T \ve 2)(x_1,t,x_2) \stackrel{\text{Defn~\ref{defn:funddefn}.\eqref{funddefn:4a}}}{=}
V_2(B_2x_2) \ve 2(x_2) V_1(t) \ve 1(x_1) \\ =
\ve 2'(\Phi_2x_2) V_1(A_2x_2) V_1(t) \ve 1(x_1)  =
 \ve 2'(\Phi_2x_2) V_1(A_2x_2) V_1(t) V_1(B_1x_1^{-1}) V_1(B_1x_1)\ve
 1(x_1) \\=
 \ve 2'(\Phi_2x_2) V_1(A_2x_2) V_1(t) V_1(B_1x_1^{-1}) \ve
 1'(\Phi_1x_1) H_1(A_1x_1) =
(\ve 1' \times_T \ve 2')(\Phi_1 \times_T \Phi_2)(x_1,t,x_2) H_1(A_1x_1)
\end{multline*}
\eqref{eq:horizontal} is indeed a $G$-span \m.

If, additionally, 
$ M_1' \stackrel{(A_1',\Phi_1',B_1')}{\implies} M_1''$ and
$ M_2' \stackrel{(A_2',\Phi_2',B_2')}{\implies}M_2''$ are $G$-span \m s, then
\begin{multline*}
  ((A_1',\Phi_1',B_1') \ast(A_1,\Phi_1,B_1)) \times_T
  ((A_2',\Phi_2',B_2') \ast (A_2,\Phi_2,B_2)) \\=
((A_1',\Phi_1',B_1') \times_T (A_2',\Phi_2',B_2') ) \ast
((A_1,\Phi_1,B_1) \times_T (A_2,\Phi_2,B_2) )  \colon
M_1 \times_T M_2 \implies M_1'' \times_T M_2''
\end{multline*}
meaning that the interchange law  holds. The functor component $M_1
\times_T M_2 \to M_1'' \times_T M_2''$ of both these $G$-span \m s
take $(x_1,t,x_2) \in \Ob{M_1 \times_T M_2}$, $R_1x_1 \xrightarrow{t} L_2x_2$, to
\begin{equation*}
  (\Phi_1'\Phi_1x_1,
 R_1''\Phi_1'\Phi_1x_1 \xleftarrow{B_1'\Phi_1x_1} R_1'\Phi_1x_1
 \xleftarrow{B_1x_1} R_1x_1 \xrightarrow{t} L_2x_2
 \xrightarrow{A_2x_2} L_2'\Phi_2x_2 \xrightarrow{A_2'\Phi_2x_2}
 L_2''\Phi_2'\Phi_2x_2, \Phi_2'\Phi_2x_2)
\end{equation*}
in $M_1'' \times_T M_2''$. The two natural transformations are 
$L_1x_1 \stackrel{A_1x_1}{\implies} L_1'\Phi_1x_1
\stackrel{A_1'\Phi_1x_1}{\implies} L_1''\Phi_1'\Phi_1x_1$ and
$L_2x_2 \stackrel{B_2x_2}{\implies} L_2'\Phi_2x_2
\stackrel{B_2'\Phi_2x_2}{\implies} L_2''\Phi_2'\Phi_2x_2$ in both cases.
Thus $\mathbf{span}_G$ is a strict
$2$-category with finite groupoids over $BG$ as objects, 
$G$-spans as
$1$-\m s, and $G$-span \m s as $2$-\m s. 
\end{rmk}

\section{Examples}
\label{sec:examples}

This section contains examples of groupoid $G$-spans and their matrices.

\begin{exmp}[Realizing matrices by $G$-spans]\label{exmp:real}
  Suppose $M$ is a finite groupoid while $S$ and $T$ are finite {\em sets}. Let $L$, $R$, and
  $\ve{}$ be arbitrary functions defined on the object set of  $M$ with values in $S$, $T$,
  $G$. Then
 \begin{center}
      \begin{tikzpicture}[double distance = 1.5pt]
       \matrix (dia) [matrix of math nodes, column sep=60pt, row
       sep=25pt]{
      M & T \\
      S & BG \\};
      \draw[->] (dia-1-1) -- node[above]{$R$}(dia-1-2);
      \draw[->] (dia-1-1) -- node[left]{$L$}  (dia-2-1);
      \draw[->] (dia-1-2) -- (dia-2-2);
      \draw[->] (dia-2-1) -- (dia-2-2);
      \path (dia-2-1) -- node[midway]{$\ve{} \colon \Ob{M} \to G$} (dia-1-2);
      \end{tikzpicture}
\end{center}
is  a $G$-span with matrix
\begin{equation*}
  [M,\ve{}](c,c) = \sum_{g \in G} \chi(M\{L^{-1}c \cap \ve{}^{-1}g
  \cap R^{-1}d\})g \qquad c \in S,\, d \in T,\, g \in G
\end{equation*}
\end{exmp}

More concretely, if $K_1$, $K_2$ are finite groups and $g_1$, $g_2$ 
elements of $G$, the $(1 \times 1)$-matrix over $\Q G$ of the $G$-span
 \begin{center}
      \begin{tikzpicture}[double distance = 1.5pt]
       \matrix (dia) [matrix of math nodes, column sep=70pt, row
       sep=25pt]{
      BK_1 \amalg BK_1 \amalg BK_2 & \{1\} \\
      \{1\} & BG \\};
      \draw[->] (dia-1-1) -- node[above]{}(dia-1-2);
      \draw[->] (dia-1-1) -- node[left]{}  (dia-2-1);
      \draw[->] (dia-1-2) -- (dia-2-2);
      \draw[->] (dia-2-1) -- (dia-2-2);
      \path (dia-2-1) -- node[midway]{$\ve{}(\Ob{BK_i})=g_i \in G$} (dia-1-2);
      \end{tikzpicture}
\end{center} 
has the single entry $[BK_1 \amalg BK_1 \amalg BK_2, \ve{}](1,1)=
(\frac{2}{|K_1|}g_1+\frac{1}{|K_2|}g_2) \in \Q_{\ge 0}G$. 

Also, as the simplest instance of Example~\ref{exmp:real}, the $G$-span
 \begin{center}
      \begin{tikzpicture}[double distance = 1.5pt]
       \matrix (dia) [matrix of math nodes, column sep=70pt, row
       sep=25pt]{
      M  & \{1\} \\
      \{1\} & BG \\};
      \draw[->] (dia-1-1) -- node[above]{}(dia-1-2);
      \draw[->] (dia-1-1) -- node[left]{}  (dia-2-1);
      \draw[->] (dia-1-2) -- (dia-2-2);
      \draw[->] (dia-2-1) -- (dia-2-2);
      \path (dia-2-1) -- node[midway]{$\ve{} \colon \Ob{M} \to G$} (dia-1-2);
      \end{tikzpicture}
\end{center} 
has matrix $[M,\ve{}](1,1) = \sum_{g \in G} \chi(M \{\ve {} =g \})g$
for any function $\ve{} \colon \Ob{M} \to G$.

\begin{exmp}
  Let $H \colon S \to BG$, $V \colon T \to BG$
be  groupoid \m s and $c \in \pi_0(S)$, $d \in \pi_0(T)$ components of
$S$, $T$. The only non-zero values of the  column vector $[(S\{c\} \hookrightarrow S,e)^*] \colon
\pi_0(S) \to \Q G$ and the row vector $[(T\{d\} \hookrightarrow
T,e)_*] \colon \pi_0(T) \to \Q G$ of the $G$-spans
 \begin{center}
      \begin{tikzpicture}[double distance = 1.5pt]
       \matrix (dia) [matrix of math nodes, column sep=40pt, row
       sep=25pt]{
  S\{c\} & S\{c\} \\ S & BG \\};
  \draw[right hook ->] (dia-1-1) -- (dia-2-1);
  \draw[double] (dia-1-1) -- (dia-1-2);
  \draw[->] (dia-2-1) -- node[below]{$H$}(dia-2-2);
  \draw[->] (dia-1-2) -- node[right]{$H|S\{c\}$}(dia-2-2);
  \path (dia-2-1) -- node[midway]{$e$} (dia-1-2);
\end{tikzpicture}
\qquad \qquad
\begin{tikzpicture}[double distance = 1.5pt]
       \matrix (dia) [matrix of math nodes, column sep=40pt, row
       sep=25pt]{
  T\{d\} & T  \\ T\{d\} & BG \\};
  \draw[->] (dia-1-1) -- (dia-1-2);
  \draw[double] (dia-1-1) -- (dia-2-1);
  \draw[->] (dia-1-2) -- node[right]{$V$}(dia-2-2);
  \draw[->] (dia-2-1) -- node[below]{$V|T\{d\}$}(dia-2-2);
  \path (dia-2-1) -- node[midway]{$e$} (dia-1-2);
\end{tikzpicture}
  \end{center}
  are $[(S\{c\} \hookrightarrow S,e)^*](c)=\overline{HS(c,c)}$ and
  $[(T\{d\} \hookrightarrow T,e)_*](d) = \overline{VT(d,d)}$. The proof is similar to that of Lemma~\ref{lemma:idempotentmatrix}. 
\end{exmp}

\begin{exmp}[The universal $G$-span from $S$ to
  $T$] \label{exmp:universal}
Let $S \xrightarrow{H} BG \xleftarrow{V} T$ be a cospan of groupoids. 
  Consider the homotopy pull-back
\begin{center}
      \begin{tikzpicture}[double distance = 1.5pt]
       \matrix (dia) [matrix of math nodes, column sep=55pt, row
       sep=35pt]{
         S \times_{BG} T & T \\ S & BG \\};
       \draw[->] (dia-1-1) -- node[above]{$p_2$}(dia-1-2);
       \draw[->] (dia-1-1) -- node[left]{$p_1$}(dia-2-1);
       \draw[->] (dia-1-2) -- node[right]{$V$}(dia-2-2);
       \draw[->] (dia-2-1) -- node[below]{$H$}(dia-2-2);
       \path (dia-2-1) --node[midway]{$\ve{} \colon Hp_1 \implies Vp_2$} (dia-1-2);
   \end{tikzpicture}
\end{center}
where $\ve{}(x,k,y)=k$ for all  $(x,k,y) \in \Ob{S \times_{BG} T}=\Ob{S} \times G \times \Ob{T}$.

For $c \in \pi_0(S)$, $t \in \pi_0(T)$, the objects of the two-sided homotopy fibre $\dqt{c}{(S \times_{BG}
  T)}{d}$ are $(s,x,k,y,t)$ where $(x,k,y) \in \Ob{S \times_{BG} T}$, $s \in S(c,x)$, $t \in T(y,d)$. A \m\ $(s_1,x_1,k_1,y_1,t_1) \to (s_2,x_2,k_2,y_2,t_2)$ is a pair $(s,t) \in S(x_1,x_2) \times T(y_1,y_2)$ such that \begin{center}
      \begin{tikzpicture}[double distance = 1.5pt]
       \matrix (dia) [matrix of math nodes, column sep=35pt, row
       sep=15pt]{
     & x_1 & \bullet & \bullet & y_1 \\
     c &&&&& d \\
      & x_2 & \bullet & \bullet & y_2 \\}; 
    \draw[->] (dia-2-1) --node[above,sloped]{$s_1$} (dia-1-2);
    \draw[->] (dia-2-1) --node[below,sloped]{$s_2$} (dia-3-2);
    \draw[->] (dia-1-5) --node[above,sloped]{$t_1$} (dia-2-6);
    \draw[->] (dia-3-5) --node[below,sloped]{$t_2$} (dia-2-6);
    \draw[->] (dia-1-2) -- node[right]{$s$}(dia-3-2);
    \draw[->] (dia-1-3) -- node[left]{$Hs$} (dia-3-3);
    \draw[->] (dia-1-4) -- node[right]{$Vt$} (dia-3-4);
    \draw[->] (dia-1-5) -- node[left]{$t$} (dia-3-5);
    \draw[->](dia-1-3) -- node[above]{$k_1$} (dia-1-4);
    \draw[->](dia-3-3) -- node[below]{$k_2$} (dia-3-4);
   \end{tikzpicture}
\end{center}
commutes. There are groupoid \m s
\begin{center}
   \begin{tikzpicture}[>=stealth', baseline=(current bounding box.-2)]
  \matrix (dia) [matrix of math nodes, column sep=35pt, row sep=30pt]{
  \dqt c{(S \times_{BG} T)}d & G \\};
  \draw[->] ($(dia-1-1.east)+(0,0.1)$) -- ($(dia-1-2.west)+(0,0.08)$) 
  node[pos=.5,above] {$\dqt c{\ve{}}d$}; 
  \draw[->] ($(dia-1-2.west)-(0,0.15)$) -- ($(dia-1-1.east)-(0,0.13)$)
   node[pos=.5,below] {} ; 
 \end{tikzpicture}
\end{center}
where the functor with domain $G$ takes $k \in G$ to $(c \xrightarrow{1} c, c,k,d, d \xrightarrow{1} d)$.
The composition from $G$ to $G$ is manifestly the identity functor. 
The commutative diagrams
\begin{center}
      \begin{tikzpicture}[double distance = 1.5pt]
       \matrix (dia) [matrix of math nodes, column sep=45pt, row
       sep=15pt]{
     & c & \bullet & \bullet & d \\
     c &&&&& d \\
      & x & \bullet & \bullet & y  \\}; 
    \draw[double] (dia-2-1) --node[above,sloped]{$1$} (dia-1-2);
    \draw[->] (dia-2-1) --node[below,sloped]{$s$} (dia-3-2);
    \draw[double] (dia-1-5) --node[above,sloped]{$1$} (dia-2-6);
    \draw[->] (dia-3-5) --node[below,sloped]{$t$} (dia-2-6);
    \draw[->] (dia-1-2) -- node[right]{$s$}(dia-3-2);
    \draw[->] (dia-1-3) -- node[left]{$Hs$} (dia-3-3);
    \draw[->] (dia-1-4) -- node[right]{$Vt^{-1}$} (dia-3-4);
    \draw[->] (dia-1-5) -- node[left]{$t^{-1}$} (dia-3-5);
    \draw[->](dia-1-3) -- node[above]{$(Vt)k(Hs)$} (dia-1-4);
    \draw[->](dia-3-3) -- node[below]{$k$} (dia-3-4);
   \end{tikzpicture}
\end{center}
define a natural transformation between the identity functor and the other composition.  These groupoid equivalences restrict to equivalences between the full subgroupoids $(\dqt{c}{(S \times_{BG}
  T)}{d})\{\dqt{c}{\ve{}}{d}=g\}$ and $\{g\}$
for any $g \in G$. We conclude that
\begin{equation*}
  [S \times_{BG} T,\ve{}] (c,d)= \frac{1}{|T(d,d)|} \sum G 
  \stackrel{\text{\eqref{eq:defnUbar}}}{=} \frac{|G|}{|T(d,d)|} \overline{G}, \qquad c \in
  \pi_0(S),\, d \in \pi_0(T),
\end{equation*}
\end{exmp}

The next example uses {\em action groupoids}. 

\begin{defn}\label{defn:actiongroupoidII}
 The action groupoid $X//G$ of a right $G$-set $X$ is the groupoid
 with object set $X$ and \m s $(X//G)(x_1,x_2) = \{g \in G \mid
 x_2g=x_1\}$ for all $x_1,x_2 \in X$.
\end{defn}
 
 Viewing $X$ as a set-valued
 functor on $BG$ and $X//G = \int_{BG} X$ as its Grothendieck
 construction,
 the \Euc\ is $\chi(X//G) = \chi( \int_{BG}X) = |X|/|G|$ by \cite[Proposition~2.8]{leinster08}.

In particular, for a subgroup $H$ of $G$, $\act HG$ is the action
groupoid for the right $G$-set $\qt HG = \{Hg \mid g \in G\}$ of right
cosets of $H$ in $G$. In this case, the \m\ sets are double $H$-cosets
\begin{equation*}
   (\act
    HG)(Hg_1,Hg_2)=g_2^{-1}Hg_1, \quad g_1,g_2 \in G
  \end{equation*}
and the \Euc\ is $\chi(\act HG)=\frac{1}{|H|}$.
The extreme cases are $EG=\act {\{e\}}G = G//G$, whose objects are the elements of $G$, and $BG=\act GG$ with $G/G$ as its only object. When
$H \leq K \leq G$,
there is a functor $\act HG \to \act KG$,
surjective on object sets, $H \backslash G \to K \backslash G$, and
injective on \m\ sets, $g_2^{-1}Hg_1 \subseteq g_2^{-1}Kg_1$

\begin{exmp}[$(1 \times 1)$-matrices]\label{exmp:-1matrix}
  If $S \xrightarrow{H} G \xleftarrow{V} T$ is a cospan of
  groups, there is a  group action $ G \times (S \times T) \to G$ given by
  $ x \cdot (s,t)  = V(t)^{-1}xH(s)$ for all $x \in G$, $s \in S$,
  $t \in T$. Suppose $M$ is a finite $(S \times T)$-invariant subset of $G$
  and let $M//(S \times T)$ be the action groupoid with
  $\Ob{M//(S \times T)}=M$ and  \m s 
  $M//(S \times T)(x_1,x_2) = \{(s,t) \mid   x_2 \cdot (s,t) = x_1 \} =
\{(s,t) \mid
  V(t)x_1=x_2H(s)\}$. There is a $G$-span
 \begin{center}
      \begin{tikzpicture}[double distance = 1.5pt]
       \matrix (dia) [matrix of math nodes, column sep=45pt, row
       sep=25pt]{
      M//(S \times T) &\bullet//T \\
      \bullet//S & \bullet//G \\};
      \draw[->] (dia-1-1) -- node[above]{$R$} (dia-1-2);
      \draw[->] (dia-1-1) -- node[left]{$L$}(dia-2-1);
      \draw[->] (dia-1-2) -- node[right]{$V$}(dia-2-2);
      \draw[->] (dia-2-1) -- node[below]{$H$}(dia-2-2);
      \path (dia-2-1) -- node[midway]{$\ve{}(x)=x$} (dia-1-2);
      \end{tikzpicture}
\end{center}
where $L(s,t)=s$, $R(s,t)=t$ for all \m s $(s,t)$ in $M//(S \times
T)$. The inclusion $\ve{} \colon M \hookrightarrow G$, is a natural transformation of functors $HL \implies VR$ as
\begin{center}
      \begin{tikzpicture}[double distance = 1.5pt]
       \matrix (dia) [matrix of math nodes, column sep=45pt, row
       sep=25pt]{
      \bullet & \bullet \\
      \bullet & \bullet \\};
      \draw[->] (dia-1-1) -- node[above]{$x_1$} (dia-1-2);
      \draw[->] (dia-2-1) -- node[below]{$x_2$} (dia-2-2);
      \draw[->] (dia-1-1) -- node[left]{$H(s)=HL(s,t)$} (dia-2-1);
       \draw[->] (dia-1-2) -- node[right]{$VR(s,t)=V(t)$} (dia-2-2);
      \end{tikzpicture}
\end{center}
commutes in $BG=\bullet//G$ for every $(s,t) \in M//(S \times T)(x_1,x_2)$.

The object set of the two-sided homotopy fibre $\dqt{\bullet}{(M//(S
  \times T)}{\bullet}$ is $S \times M \times T$ and the \m\ sets
\begin{equation*}
  (M//(S \times T))((s_1,x_1,t_1),(s_2,x_2,t_2)) =\{(s,t) \mid
V(t)x_1=x_2H(s), ss_1=s_2, t_2t=t_1\}
\end{equation*}
contain at most one \m . Thus $\dqt{\bullet}{(M//(S
  \times T)}{\bullet} = \pi_0(\dqt{\bullet}{(M//(S
  \times T)}{\bullet})$.
The function
$(\dqt{\bullet}{\ve{}}{\bullet})(s,x,t)= x \cdot (s,t^{-1}) =
V(t)xH(s) \colon S\times M \times T \to M$ induces a bijective map
$ \pi_0( \dqt{\bullet}{(M//(S \times T)}{\bullet}) \to M $ as in Lemma~\ref{lemma:twosidedfibre}. Indeed,
the functor 
$\dqt{\bullet}{\ve{}}{\bullet} \colon \dqt{\bullet}{(M//(S \times
  T)}{\bullet} \to M$ and the functor
$ M \to \dqt{\bullet}{(M//(S \times T)}{\bullet}$ taking $x$ to
$(e,x,e)$ are connected by a natural iso\m\
\begin{center}
      \begin{tikzpicture}[double distance = 1.5pt]
       \matrix (dia) [matrix of math nodes, column sep=25pt, row
       sep=15pt]{
        & \bullet &(\dqt{\bullet}{\ve{}}{\bullet}) (s,x,t) & \bullet \\
        \bullet &&&& \bullet \\
        & \bullet & x & \bullet \\};
       \draw[->] (dia-2-1) -- node[sloped,above]{$e$} (dia-1-2);
       \draw[->] (dia-1-4) -- node[sloped,above]{$e$} (dia-2-5);
       \draw[->] (dia-2-1) -- node[sloped,below]{$s$} (dia-3-2);
       \draw[->] (dia-3-4) -- node[sloped,below]{$t$} (dia-2-5);
       \draw[->] (dia-1-2) -- node[right]{$s$} (dia-3-2);
       \draw[->] (dia-1-4) -- node[left]{$t^{-1}$} (dia-3-4);
       \draw[->] (dia-1-3) -- node[left]{$(s,t^{-1})$} (dia-3-3);
      \end{tikzpicture}
\end{center}
This equivalence of groupoids restricts to an equivalence between $(\dqt{\bullet}{(M//(S
  \times T)}{\bullet})\{ \dqt{\bullet}{\ve{}}{\bullet}=x\}$ and
$\{x\}$ for all $x \in M$. If $g \in G$ lies outside $M$ then $(\dqt{\bullet}{(M//(S
  \times T)}{\bullet})\{ \dqt{\bullet}{\ve{}}{\bullet}=x\}$ is the
empty groupoid with \Euc\ $0$.

We have now seen that the entry of the $(1 \times 1)$-matrix associated to the
$G$-span of $M//(S \times T)$ is
\begin{equation*}
  [M//(S \times T), \ve{}](\bullet,\bullet) = \frac{1}{|T|}  \sum M 
\end{equation*}
In particular, if $S=T=\{e\}$ is the trivial subgroup of $G$ and $M$
any finite set of
elements of $G$, the matrix of the $G$-span
\begin{center}
      \begin{tikzpicture}[double distance = 1.5pt]
       \matrix (dia) [matrix of math nodes, column sep=45pt, row
       sep=25pt]{
      M & \{1\}\\
      \{1\} & BG \\};
      \draw[->] (dia-1-1) -- (dia-1-2.175);
      \draw[->] (dia-1-1) -- (dia-2-1);
      \draw[->] (dia-1-2) -- (dia-2-2);
      \draw[->] (dia-2-1) -- (dia-2-2);
      \path (dia-2-1) -- node[midway]{$\ve{}(x)=x$} (dia-1-2);
      \end{tikzpicture}
\end{center}
is the $(1 \times 1)$-matrix $[M,\ve{}](1,1)=\sum M$. For instance,
for any   $x \in G$, the $G$-span 
\begin{center}
      \begin{tikzpicture}[double distance = 1.5pt]
       \matrix (dia) [matrix of math nodes, column sep=45pt, row
       sep=25pt]{
      \{x\} & \{1\} \\
      \{1\} & BG \\};
      \draw[->] (dia-1-1) -- (dia-1-2);
      \draw[->] (dia-1-1) -- (dia-2-1);
      \draw[->] (dia-1-2) -- (dia-2-2);
      \draw[->] (dia-2-1) -- (dia-2-2);
      \path (dia-2-1) -- node[midway]{$\ve{}(x)=x$} (dia-1-2);
      \end{tikzpicture}
\end{center}
realizes
the $(1 \times 1)$-matrix $(x)$ over $\Q G$. (Taking $G=\sqrt[4]{1}$
to be the complex $4$th roots of unity, we recover an example from Section~\ref{sec:introduction}.)
\end{exmp}

\begin{exmp}[Spans of groups] 
Consider the two diagrams
  \begin{center}
      \begin{tikzpicture}[double distance = 1.5pt]
       \matrix (dia) [matrix of math nodes, column sep=65pt, row
       sep=25pt]{
      M & K_2 && BM & BK_2 \\ K_1 & G && BK_1 & BG \\};
     \draw[->] (dia-1-1) --node[above]{$R$} (dia-1-2);
     \draw[->] (dia-1-4) --node[above]{$BR$} (dia-1-5);
     \draw[->](dia-2-1) -- node[below]{$H$}(dia-2-2);
     \draw[->](dia-2-4) -- node[below]{$BH$}(dia-2-5);
     \draw[->](dia-1-1) --node[left]{$L$} (dia-2-1);
     \draw[->](dia-1-4) --node[left]{$BL$} (dia-2-4);
     \draw[->](dia-1-2) -- node[right]{$V$}(dia-2-2);
     \draw[->](dia-1-5) -- node[right]{$BV$}(dia-2-5);
     \path (dia-2-4) --node[midway]{$\ve{}=x$} (dia-1-5);
     \path (dia-2-1) -- node[midway]{$HL=(VR)^x$} (dia-1-2);
\end{tikzpicture}
  \end{center}
On the left is a diagram of finite groups that is commutative up to
conjugation in the sense
that there is an $x \in G$ such that
$x(HL)(m)=(VR)(m)x$ for all $m \in
M$. The diagram on the right, induced from the one on the right, is a
$G$-span where $\ve{}$ is the constant function with value $x$. 

The object set of $\dqt{\bullet}{BM}{\bullet}$ is $K_1 \times K_2$ and
$(\dqt{\bullet}{\ve{}}{\bullet})(k_1,k_2)=V(k_2)xH(k_1)$. 
Also, the group $M$ acts on the set $K_1 \times K_2$, $m \cdot (k_1,k_2) =
(L(m)k_1,k_2R(m)^{-1})$, and
\begin{equation*}
  \dqt{\bullet}{BM}{\bullet} = (K_1 \times K_2)//M \qquad
  (\dqt{\bullet}{BM}{\bullet})\{\dqt{\bullet}{\ve{}}{\bullet}=g\} =
  \{(k_1,k_2) \in K_1 \times K_2 \mid V(k_2)xH(k_1)= g\}//M
\end{equation*}
are action groupoids. We conclude that
\begin{equation*}
  [BM,\ve{}](\bullet,\bullet) = \frac{1}{|M||K_2|} \sum_{g \in G} |\{(k_1,k_2) \in K_1 \times K_2 \mid V(k_2)xH(k_1)=g\} |g
\end{equation*}
according to Definition~\ref{defn:funddefn}.\eqref{funddefn:6}.

Now suppose we have composable spans of groups 
\begin{center}
      \begin{tikzpicture}[double distance = 1.5pt]
       \matrix (dia) [matrix of math nodes, column sep=65pt, row
       sep=30pt]{
     & M_2 & K_3 \\
     M_1 & K_2 & G \\
     K_1 & G \\};
    \draw[->] (dia-1-2) -- node[above]{$R_2$}(dia-1-3);
    \draw[->] (dia-2-1) -- node[above]{$R_1$}(dia-2-2);
    \draw[->] (dia-2-2) -- node[below]{$V$} (dia-2-3);
    \draw[->] (dia-3-1) -- node[below]{$H_1$}(dia-3-2);
    \draw[->] (dia-2-1) -- node[left]{$L_1$}(dia-3-1);
    \draw[->] (dia-2-2) --node[right]{$V$} (dia-3-2);
    \draw[->] (dia-1-2) -- node[left]{$L_2$}(dia-2-2);
    \draw[->] (dia-1-3) -- node[right]{$V_2$}(dia-2-3);
    \path (dia-3-1) -- node[midway]{$H_1L_1=(VR_1)^{x_1}$}(dia-2-2);
    \path (dia-2-2) -- node[midway]{$VL_2=(V_2R_2)^{x_2}$}(dia-1-3);
    \end{tikzpicture}
\end{center}
where we can find $x_1,x_2\in  G$ such that $x_1(H_1L_1)(m_1)=(VR_1)(m_1)x_1$ for all $m_1 \in M_1$, and
$x_2(VL_2)(m_2)=(V_2R_2)(m_2)x_2$ for all $m_2 \in M_2$. For all
$(m_1,m_2) \in M_1 \times_{K_2} M_2$, $R_1m_1=L_2m_2$, and, as
$x_2x_1(H_1L_1)(m_1)=(V_2R_2)(m_2)x_2x_1$,
we may extract the two diagrams 
 \begin{center}
      \begin{tikzpicture}[double distance = 1.5pt]
       \matrix (dia) [matrix of math nodes, column sep=85pt, row
       sep=30pt]{
      M_1 \times_{K_2} M_2 & K_3 & B(M_1 \times_{K_2} M_2)  & BK_3 \\ K_1 & G & BK_1 & BG \\};
     \draw[->] (dia-1-1) --node[above]{$R_2p_2$} (dia-1-2);
     \draw[->] (dia-1-3) --node[above]{$B(R_2p_2)$} (dia-1-4);
     \draw[->](dia-2-1) -- node[below]{$H_1$}(dia-2-2);
     \draw[->](dia-2-3) -- node[below]{$BH_1$}(dia-2-4);
     \draw[->](dia-1-1) --node[left]{$L_1p_1$} (dia-2-1);
     \draw[->](dia-1-3) --node[left]{$B(L_1p_1)$} (dia-2-3);
     \draw[->](dia-1-2) -- node[right]{$V_2$}(dia-2-2);
     \draw[->](dia-1-4) -- node[right]{$BV_2$}(dia-2-4);
     \path (dia-2-4) --node[midway]{$\ve{}=x_2x_1$} (dia-1-3);
     \path (dia-2-1) -- node[midway]{$H_1L_1p_1=(V_2R_2p_2)^{x_2x_1}$} (dia-1-2);
\end{tikzpicture}
  \end{center}
Since $[B(M_1 \times_{K_2} M_2), \ve{}=x_2x_1]=[BM_1,\ve{}=x_1][BM_2,\ve{}=x_2]$ by Theorem~\ref{thm:main}
we arrive at the combinatorial identities
\begin{multline*}
  |M_1||K_2||M_2| |\{(k_1,k_3) \in K_1 \times K_3 \mid
    V_2(k_3)x_2x_1H_1(k_1)=g\}| \\ =
  |M_1 \times_{K_2} M_2|
  \sum_{\substack{g_1,g_2 \in G \\ g_1g_2=g}}
   |\{(k_1,k_2) \in K_1 \times K_2 \mid V(k_2)x_1H_1(k_1)=g_1\}|
     |\{(k_2,k_3) \in K_2 \times K_3 \mid V_2(k_3)x_2V(k_2)=g_2\}|
\end{multline*}
valid for all $g \in G$.
\end{exmp}

\begin{exmp}[Spans of action groupoids]
  Let $H_1$, $K_1$ and $K_2$ be subgroups of $G$ with $H_1 \leq K_1 \cap
  K_2$ so that 
  \begin{center}
       \begin{tikzpicture}[double distance = 1.5pt]
       \matrix (dia) [matrix of math nodes, column sep=50pt, row
       sep=25pt]{
         \act  {H_1}G & \act {K_2}G\\ \act {K_1}G & \act GG \\};
       \draw[->] (dia-1-1) --node[above]{$R$}(dia-1-2);
       \draw[->] (dia-1-1) -- node[left]{$L$}(dia-2-1);
       \draw[->] (dia-1-2) -- node[right]{$V$}(dia-2-2);
       \draw[->] (dia-2-1) -- node[below]{$H$}(dia-2-2);
        \path (dia-2-1) -- node[midway]{$\ve 1 \colon H_1
          \backslash G \to \{e\}$} (dia-1-2);
   \end{tikzpicture}
  \end{center}
is a $G$-span of action groupoids.  The function $\ve{1}$ is constant with value the neutral
element $e \in G$ since the diagram commutes.

The objects of the two-sided homotopy fibre 
$\dqt{K_1g_1}{(\act {H_1}G)}{K_2g_2}$ are all triples $(x^{-1}k_1g_1,H_1x,g_2^{-1}k_2x)$
where $k_1 \in K_1$, $k_2 \in K_2$, $x \in G$, and 
\begin{equation*}
(\dqt{K_1g_1}{\ve{1}}{K_2g_2})
(x^{-1}k_1g_1,H_1x,g_2^{-1}k_2x)=g_2^{-1}k_2k_1g_1 = g_2^{-1}
(\dqt{K_1}{\ve{1}}{K_2})(x^{-1}k_1,k_2x)g_1  
\end{equation*}
by Definition~\ref{defn:funddefn}.\eqref{funddefn:2}.
For all $g_1,g_2,g \in G$ there are obvious iso\m s of groupoids
\begin{align*}
  \dqt{K_1}{(\act {H_1}G)}{K_2} & \longleftrightarrow
  \dqt{K_1g_1}{(\act {H_1}G)}{K_2g_2} \\
  (\dqt{K_1}{(\act {H_1}G)}{K_2})\{ \dqt{K_1}{\ve{1}}{K_2}=g_2gg_1^{-1}\} &
                                                            \longleftrightarrow 
  (\dqt{K_1g_1}{(\act {H_1}G)}{K_2g_2})\{ \dqt{K_1g_1}{\ve{1}}{K_2g_2}=g\}
\end{align*}
Thus we only need to consider the two-sided homotopy fibre
$\dqt{K_1}{(\act{H_1}{G})}{K_2}$. We note first that there is at most
one \m\ in any \m\ set so that
$\dqt{K_1}{(\act{H_1}{G})}{K_2}$ is equivalent to its component set. 
The commutative
diagram in $\dqt{K_1}{(\act {H_1}G)}{K_2}$
\begin{center}
     \begin{tikzpicture}[double distance = 1.5pt]
       \matrix (dia) [matrix of math nodes, column sep=30pt, row
       sep=15pt]{
     & K_1x & H_1x & K_2x \\
     K_1 &&&& K_2 \\
     & K_1 & H_1 & K_2 \\};
     \draw[->] (dia-2-1) -- node[above,sloped]{$x^{-1}k_1$}
     (dia-1-2);
     \draw[->] (dia-2-1) -- node[below,sloped]{$k_1$} (dia-3-2);
    \draw[->] (dia-1-4) -- node[above,sloped]{$k_2x$}
     (dia-2-5);
     \draw[->] (dia-3-4) -- node[below,sloped]{$k_2$}
     (dia-2-5);
     \draw[->] (dia-1-2) -- node[right]{$x$}(dia-3-2); 
     \draw[->] (dia-1-3) -- node[right]{$x$}(dia-3-3); 
     \draw[->] (dia-1-4) -- node[left]{$x$}(dia-3-4); 
\end{tikzpicture}
\end{center}
shows that  $(x^{-1}k_1,H_1x,k_2x) \cong
(k_1,H_1,k_2)$ and  another  diagram
\begin{center}
     \begin{tikzpicture}[double distance = 1.5pt]
       \matrix (dia) [matrix of math nodes, column sep=30pt, row
       sep=15pt]{
     & K_1 & H_1 & K_2 \\
     K_1 &&&& K_2 \\
     & K_1 & H_1 & K_2 \\};
     \draw[->] (dia-2-1) -- node[above,sloped]{$k_{11}$}
     (dia-1-2);
     \draw[->] (dia-2-1) -- node[below,sloped]{$k_{12}$} (dia-3-2);
    \draw[->] (dia-1-4) -- node[above,sloped]{$k_{21}$}
     (dia-2-5);
     \draw[->] (dia-3-4) -- node[below,sloped]{$k_{22}$}
     (dia-2-5);
     \draw[->] (dia-1-2) -- node[right]{$h_1$}(dia-3-2); 
     \draw[->] (dia-1-3) -- node[right]{$h_1$}(dia-3-3); 
     \draw[->] (dia-1-4) -- node[left]{$h_1$}(dia-3-4); 
\end{tikzpicture}
\end{center}
shows that
\begin{equation*}
  (k_{11},H_1,k_{21}) \cong (k_{12},H_1,k_{22}) \iff
  \exists h_1 \in H_1 \colon (h_1k_{11},k_{21}h_1^{-1}) = (k_{12},k_{22}) 
\end{equation*}
The conclusion is that $(k_1,k_2) \to (k_1,H_1,k_2)$ induces
a bijection
\begin{equation*}
  \qt{H_1}{(K_1 \times K_2)} \stackrel{\cong}{\longrightarrow} \pi_0(\dqt{K_1}{(\act {H_1}G)}{K_2})
\end{equation*}
between
 the set of orbits for the free left action $h_1 \cdot (k_1,k_2) =
(h_1k_1,k_2h_1^{-1})$ of $H_1$ on $K_1 \times K_2$ and
 the component set
of the two-sided homotopy fibre $\dqt{K_1}{(\act {H_1}G)}{K_2}$.
The \Euc\ is
\begin{equation*}
  \chi(\dqt{K_1}{(\act {H_1}G)}{K_2}) = |\pi_0(\dqt{K_1}{(\act {H_1}G)}{K_2})|
  = |\qt{H}{(K_1 \times K_2)}| = \frac{|K_1||K_2|}{|H_1|}
\end{equation*}
The function of Lemma~\ref{lemma:twosidedfibre},
$
  \pi_0(\dqt{K_1}{\ve{1}}{K_2}) \colon \pi_0(\dqt{K_1}{(\act {H_1}G)}{K_2}) =
  \qt{H_1}{(K_1 \times K_2)} \to G
$,
takes $H_1(k_1,k_2)$ to $k_2k_1$ and induces a bijection
\begin{equation*}
  \qt{H}{\{ (k_1,k_2) \in K_1 \times K_2 \mid k_2k_1=g \}}
  \stackrel{\cong}{\to}
  \pi_0(\dqt{K_1}{(\act {H_1}G}){K_2} )\{ \dqt{K_1}{\ve{1}}{K_2}=g\}
\end{equation*}
so the \Euc\ is 
\begin{equation*}
  \chi(\dqt{K_1}{(\act {H_1}G)}{K_2})\{\dqt{K_1}{\ve{1}}{K_2}=g\}) = 
 \frac{|\{ (k_1,k_2) \in K_1 \times K_2 \mid k_2k_1=g \}|}{|H_1|} 
\end{equation*}
We can now say that 
\begin{equation*}
  [\act{H_1}G,\ve{1}](K_1g_1,K_2g_2) = 
\frac{1}{|H_1||K_2|} \sum_{g \in G}
   |\{ (k_1,k_2) \in K_1 \times K_2 \mid k_2k_1 =  g_2gg_1^{-1}\}|g
\end{equation*}


Suppose further $G$ has subgroups, $H_2$ and $K_3$, with $H_2 \leq K_2 \cap
K_3$. The composable $G$-spans 
  \begin{center}
       \begin{tikzpicture}[double distance = 1.5pt]
       \matrix (dia) [matrix of math nodes, column sep=50pt, row
       sep=25pt]{
       & \act{H_2}G & \act{K_3}G \\
         \act  {H_1}G & \act {K_2}G & \act GG\\ 
        \act {K_1}G & \act GG \\};
       \draw[->] (dia-2-1) --node[above]{$R$}(dia-2-2);
       \draw[->] (dia-2-1) -- node[left]{$L$}(dia-3-1);
       \draw[->] (dia-2-2) -- node[right]{$V$}(dia-3-2);
       \draw[->] (dia-3-1) -- node[below]{$H$}(dia-3-2);
        \draw[->] (dia-1-2) -- (dia-1-3);
        \draw[->] (dia-1-2) -- (dia-2-2);
        \draw[->] (dia-1-3) -- (dia-2-3);
        \draw[->] (dia-2-2) -- node[below]{$V$}(dia-2-3);
        \path (dia-3-1) -- node[midway]{$\ve 1 \colon H_1
          \backslash G \to \{e\}$} (dia-2-2);
         \path (dia-2-2) -- node[midway]{$\ve 2 \colon H_2
          \backslash G \to \{e\}$} (dia-1-3);
   \end{tikzpicture}
  \end{center}
produce a $G$-span $(\act{H_1}G)
\times_{\act{K_2}G} (\act{H_2}G)$ from $\act {K_1}G$ to $\act{K_3}G$. 
The group $H_1 \times H_2$ acts on $K_1 \times K_2
\times K_3$ by the rule $(h_1,h_2) \cdot (k_1,k_2,k_3)=(h_1k_1,h_2k_2h_1^{-1},k_3h_2^{-1})$.
In much the same way as before we find
that any object of the two sided homotopy fibre $\dqt{K_1}{(\act{H_1}G
  \times_{\act{K_2}G} \act{H_2}G)}{K_3}$ is isomorphic to  one of the
form $(k_1,H_1,k_2,H_2,k_3)$ 
\begin{center}
      \begin{tikzpicture}[double distance = 1.5pt]
       \matrix (dia) [matrix of math nodes, column sep=30pt, row
       sep=15pt]{
     & K_1x_1 & H_1x_1 & K_2x_1 & K_2x_2 & H_2x_2 & K_3x_2  \\
     K_1 &&&&&&& K_3 \\
      & K_1 & H_1 & K_2 & K_2 & H_2 & K_3  \\}; 
    \draw[->] (dia-2-1) --node[above,sloped]{$x_1^{-1}k_1$} (dia-1-2);
    \draw[->] (dia-2-1) --node[below,sloped]{$k_1$} (dia-3-2);
    \draw[->] (dia-1-7) --node[above,sloped]{$k_3x_2$} (dia-2-8);
    \draw[->] (dia-3-7) --node[below,sloped]{$k_3$} (dia-2-8);
    \draw[->] (dia-1-2) -- node[right]{$x_1$}(dia-3-2);
    \draw[->] (dia-1-3) -- node[right]{$x_1$}(dia-3-3);
    \draw[->] (dia-1-4) -- node[right]{$x_1$}(dia-3-4);
    \draw[->] (dia-1-5) -- node[right]{$x_2$}(dia-3-5);
    \draw[->] (dia-1-6) -- node[right]{$x_2$}(dia-3-6);
    \draw[->] (dia-1-7) -- node[left]{$x_2$}(dia-3-7);
   \draw[->] (dia-1-4) -- node[above]{$x_2^{-1}k_2x_2$}(dia-1-5);
   \draw[->] (dia-3-4) -- node[below]{$k_2$}(dia-3-5);
   \end{tikzpicture}
\end{center}
where $k_1 \in (\act{K_1}G)(K_1,K_1)=K_1$,
$k_2 \in (\act{K_2}G)(K_2,K_2)=K_2$, $k_3 \in (\act{K_3}G)(K_3,K_3)=K_3$,
and that two such objects are isomorphic if and only the corresponding
$(k_1,k_2,k_3)$ lie in the same $(H_1 \times H_2)$-orbit.
These observations can be used to determine the
 function from Lemma~\ref{lemma:twosidedfibre} 
\begin{gather*}
  \pi_0( \dqt{K_1}{(\ve 1 \times_{\act{K_2}G} \ve 2)}{K_3}) \colon 
  \pi_0( \dqt{K_1}{(\act{H_1}G \times_{\act{K_2}G} \act{H_2}G)}{K_3}) = 
\qt{(H_1 \times
    H_2)}{(K_1 \times K_2 \times K_3)} \to G  \\
   (H_1 \times H_2)(k_1,k_2,k_3) \to k_3k_2k_1 
\end{gather*}
and the \Euc\
\begin{multline*}
  \chi( (\dqt{K_1}{(\act{H_1}G)
\times_{\act{K_2}G} (\act{H_2}G)}{K_3}) \{  \dqt{K_1}{(\ve 1
  \times_{\act{K_2}G} \ve 2)}{K_3} =g\} ) \\= \frac{
|\{(k_1,k_2,k_3) \in K_1 \times K_2 \times K_3
  \mid k_3k_2k_1=g \}| }{|H_1 || H_2|}
\end{multline*}
 for any $g \in G$.

The identity
\begin{align*}
   \frac{1}{|K_3|} & \frac{1}{|H_1|}
  \frac{1}{|H_2|}
  \sum_{g \in G}  |\{(k_1,k_2,k_3) \in K_1 \times K_2 \times K_3
  \mid k_3k_2k_1=g \}|g  \\ & \stackrel{\text{Thm~\ref{thm:main}}}{=}
  [\act{H_1}G \times_{\act{K_2}G} \act{H_2}G, \ve 1
  \times_{\act{K_2}G} \ve 2] (K_1,K_3)\\
 & =[\act{H_1}G,\ve 1](K_1,K_2)[\act{H_2}G,\ve 2](K_2,K_3) \\ &=
  \frac{1}{|K_2|}\frac{1}{|H_1}\frac{1}{|K_3}\frac{1}{|H_2|}
  \sum_{g \in G} \big(\sum_{g_2g_1=g} |\{(k_1,k_2) \mid k_2k_1=g_1 \}|
  |\{(k_2,k_3) \mid k_3k_2=g_2\}| \big) g
\end{align*}
which is equivalent to
\begin{equation*}
  |K_2||\{(k_1,k_2,k_3)  \mid k_3k_2k_1 = g\}| =
\sum_{\substack{g_1,g_2 \in G \\ g_1g_2=g}}
|\{(k_1,k_2) \mid k_2k_1=g_1 \}| |\{(k_2,k_3) \mid k_3k_2=g_2 \}| 
\end{equation*}
can also, of course, be verified directly from the $|K_2|$-to-$1$ map
\begin{equation*}
 \coprod_{\substack{g_1,g_2 \in G \\
    g_2g_1=g}}
(\{(k_1,k_{21})  \mid k_{21}k_1=g_1\} \times
  \{(k_{22},k_3)  \mid
k_3k_{22}=g_2\}) \to \{(k_1,k_2,k_3) \mid k_3k_2k_1 = g\}  \\
\end{equation*}
taking $(k_1,k_{21},k_{22},k_3)$ to  $(k_1, k_{22} k_{21},k_3)$. 
\end{exmp}

Next is an example of a $O(1)$-span, where
$O(1)=\{ \pm 1\}$, with invertible complex matrix.  The following groupoids will be needed:

\begin{description}

\item[$\mathbf{Fin}$] The groupoid of finite sets
  and bijections.

\item[$\mathbf{Fin}_n$] The full subgroupoid of $\mathbf{Fin}$ on all
  sets of cardinality $n$.

\item[$\mathbf{FinPerm}$] An object of the groupoid $\mathbf{FinPerm}$
  is a pair  $(X,\sigma)$ where $\sigma$ is a
  permutation of the finite set $X$. A \m\
  $(X_1,\sigma_1) \to (X_2,\sigma_2)$  is a bijection \func
  f{X_1}{X_2} such that $\sigma_1 = \sigma_2^f$.

\item[$\mathbf{FinPerm}_{n,k}$] The full subgroupoid of
  $\mathbf{FinPerm}$ of all $(X,\sigma)$ where $X$ has
  cardinality $n$ and $\sigma$ has exactly $k$ cycles.

\item[$\mathbf{FinRel}$] An object of the groupoid $\mathbf{FinRel}$
  is a pair $(X,\rho)$ where 
 $\rho \subseteq X \times X$ is an equivalence relation on the finite set $X$. A \m\
 $(X_1,\rho_1) \to (X_2,\rho_2)$ is a bijection \func f{X_1}{X_2} such that
 $(f \times f)\rho_1=\rho_2$.

\item[$\mathbf{FinRel}_{n,k}$] The full subgroupoid of
  $\mathbf{FinRel}$ of all $(X,\rho)$ where $X$ has
  cardinality $n$ and $\rho$ has exactly $k$ classes.

\end{description}

For any finite set $X$,
the
group $\Sigma(X)$ of permutations of $X$ acts on
\begin{itemize}
\item $S_1(X,k)$, the set of permutations of $X$
with $k$ cycles
\item $S_2(X,k)$, the set of equivalence relations on $X$
with $k$ classes 
\end{itemize}

\begin{prop}\label{prop:FinPR}
  Fix a set $X$ of cardinality $n$.
  \begin{itemize}
  \item  $\mathbf{FinPerm}_{n,k}$ is equivalent to the action groupoid
  $S_1(X,k)//\Sigma(X)$. 
\item $\mathbf{FinRel}_{n.k}$
 is equivalent to the action groupoid $S_2(X,k)//\Sigma(X)$.
  \end{itemize}
\end{prop}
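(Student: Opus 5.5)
The plan is to exhibit, in each case, a fully faithful and essentially surjective functor from the action groupoid to the relevant full subgroupoid. I will treat the first bullet in detail; the second is the same argument with ``permutation with $k$ cycles'' replaced by ``equivalence relation with $k$ classes''.

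First I fix conventions, since Definition~\ref{defn:actiongroupoidII} uses \emph{right} actions. The set $S_1(X,k)$ is a right $\Sigma(X)$-set via conjugation $\sigma\cdot f = f^{-1}\sigma f$. By Definition~\ref{defn:actiongroupoidII}, a morphism $\sigma_1\to\sigma_2$ in $S_1(X,k)//\Sigma(X)$ is an $f\in\Sigma(X)$ with $f^{-1}\sigma_2 f=\sigma_1$. This is exactly the condition $\sigma_1=\sigma_2^f$ defining morphisms $(X,\sigma_1)\to(X,\sigma_2)$ in $\mathbf{FinPerm}$, so a single bijection $f$ serves as a morphism in both groupoids. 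Similarly, $S_2(X,k)$ is a right $\Sigma(X)$-set via $\rho\cdot f=(f^{-1}\times f^{-1})\rho$. A morphism $\rho_1\to\rho_2$ is then an $f$ with $(f\times f)\rho_1=\rho_2$, which is the morphism condition in $\mathbf{FinRel}$. If the orientation convention differs, I can replace $f$ by $f^{-1}$ throughout; this does not affect the argument.

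Next I define $\Phi\colon S_1(X,k)//\Sigma(X)\to\mathbf{FinPerm}_{n,k}$ by $\sigma\mapsto(X,\sigma)$ on objects and $f\mapsto f$ on morphisms. Functoriality is immediate because composition in both groupoids is composition of bijections. $\Phi$ is fully faithful: any morphism $(X,\sigma_1)\to(X,\sigma_2)$ in $\mathbf{FinPerm}$ is a bijection $X\to X$ satisfying the same equation, so $\Phi$ is a bijection on each morphism set.

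The only substantive step is essential surjectivity. Given $(Y,\tau)$ with $|Y|=n$ and $\tau$ having $k$ cycles, I choose a bijection $g\colon Y\to X$ and put $\sigma=g\tau g^{-1}$. Conjugation by a bijection carries cycles to cycles, so $\sigma$ also has exactly $k$ cycles and lies in $S_1(X,k)$. Moreover $g\colon(Y,\tau)\to(X,\sigma)$ is an isomorphism in $\mathbf{FinPerm}_{n,k}$. For relations, given $(Y,\rho)$ I put $\rho'=(g\times g)\rho$. This is an equivalence relation on $X$ whose classes are the images under $g$ of the classes of $\rho$, so it has $k$ classes, and $g$ is again an isomorphism.

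A fully faithful, essentially surjective functor between groupoids is an equivalence, which proves both bullets. No step is a real obstacle; the only point needing care is matching the left/right conventions of Definition~\ref{defn:actiongroupoidII} with the direction of the morphisms in $\mathbf{FinPerm}$ and $\mathbf{FinRel}$.
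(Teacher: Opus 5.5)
Your proof is correct and is essentially the paper's argument. The paper also shows that every $(Y,\tau)$ (resp.\ $(Y,\rho)$) is isomorphic to an object on $X$, and that the morphism sets of the action groupoid coincide with those of $\mathbf{FinPerm}$ (resp.\ $\mathbf{FinRel}$); you additionally check explicitly that the right-action conventions of Definition~\ref{defn:actiongroupoidII} match the direction of morphisms, which the paper leaves implicit.
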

\begin{proof}
Any object $(Y,\tau)$ of $\mathbf{FinPerm}_{n,k}$ is isomorphic to one
of the form $(X,\sigma)$ for some permutation $\sigma \in S_1(X,k)$
  \begin{center}
      \begin{tikzpicture}[double distance = 1.5pt]
       \matrix (dia) [matrix of math nodes, column sep=45pt, row
       sep=25pt]{
       X & Y \\ X  &  Y\\};
       \draw[->] (dia-1-1) -- node[above]{$f$} (dia-1-2);
       \draw[->] (dia-2-1) -- node[below]{$f$} (dia-2-2);
       \draw[->] (dia-1-1) -- node[left]{$\tau^f$}
       (dia-2-1);
       \draw[->] (dia-1-2) -- node[right]{$\tau$} (dia-2-2);
   \end{tikzpicture}
\end{center}
and $(S_1(X,k)//\Sigma(X))(\sigma_1,\sigma_2) =
\mathbf{FinPerm}((X,\sigma_1),(X,\sigma_2))$ for any two elements 
$\sigma_1,\sigma_2 \in S_1(X,k)$.

Any object $(Y,T)$ of $\mathbf{FinRel}_{n,k}$ is isomorphic to one of
the form $(X,\rho)$ for some relation $\rho \in S_2(X,k)$ and 
$(S_2(X,k)//\Sigma(X))(\rho_1,\rho_2) =
\mathbf{FinRel}((X,\rho_1),(X,\rho_2))$ for any two elements 
$\rho_1,\rho_2 \in S_2(X,k)$
\end{proof}

These groupoids support functors taking $(X,\sigma)$ or $(X,\rho)$ to
set $\mathbf{Fin}(X,X)$ of permutations of $X$. The associated
Grothendieck constructions are the groupoids
whose objects are all triples
$(X,\sigma,\tau)$ or $(X,\rho,\tau)$ where $\tau$ is a permutation of
$X$. Proposition~\ref{prop:FinPR} and \cite[Proposition~2.8]{leinster08}
imply that
\begin{alignat*}{3}
  \label{eq:2}
  &\chi(\mathbf{FinPerm}_{n,k}) = S_1(n,k)/n! &&\qquad &&
  \chi(\mathbf{FinRel}_{k,m}) = S_2(k,m)/k! \\
  & \chi( \int_{\mathbf{FinPerm}_{n,k}} \mathbf{Fin}(X,X) ) = S_1(n,k) &&\qquad
  && \chi( \int_{\mathbf{FinRel}_{k,m}} \mathbf{Fin}(X,X) ) = S_2(k,m)
\end{alignat*}
where $S_1(n,k)$ and  $S_2(k,m)$
a  Stirling numbers of the first and second kind \cite[pp
18, 33]{stanley97}.

In the $O(1)$-span
 \begin{center}
      \begin{tikzpicture}[double distance = 1.5pt]
       \matrix (dia) [matrix of math nodes, column sep=65pt, row
       sep=35pt]{
       \displaystyle{\int_{\mathbf{FinPerm}} \mathbf{Fin}(X,X) }&   \Z \\
       \Z &  BO(1)\\};
       \draw[->] (dia-1-1) -- node[above]{$R$} (dia-1-2);
       \draw[->] (dia-2-1) -- (dia-2-2);
       \draw[->] (dia-1-1) -- node[left]{$L$} (dia-2-1);
       \draw[->] (dia-1-2) --  (dia-2-2);
       \path (dia-2-1) -- node[midway]{$\ve{1}(X,\sigma)=(-1)^{|X|-|\sigma|}$}(dia-1-2);
   \end{tikzpicture}
\end{center}
$L(X,\sigma,\tau)=|X|$, the cardinality of $X$, $R(X,\sigma,\tau) = |\sigma|$ the number of cycles in $\sigma$,
and the functors to $BO(1)$ are constant.  For any two natural numbers, $n$
and $k$, the two-sided homotopy fibre
\begin{equation*}
  \dqt n{\int_{\mathbf{FinPerm}} \mathbf{Fin}(X,X)}k = \int_{\mathbf{FinPerm_{n,k}}} \mathbf{Fin}(X,X)
\end{equation*}
and, since, moreover, for any $g \in O(1)$,
\begin{equation*}
   (\dqt{n}{\int_{\mathbf{FinPerm}} \mathbf{Fin}(X,X)}{k}) \{ \dqt{n}{\ve{1}}{k} = g\} =
   \begin{cases}
     \dqt{n}{\int_{\mathbf{FinPerm}} \mathbf{Fin}(X,X)}{k}  & g = (-1)^{n-k} \\
     \emptyset & g \neq (-1)^{n-k} 
   \end{cases}
\end{equation*}
the matrix entries are
\begin{equation*}
  [\int_{\mathbf{FinPerm}} \mathbf{Fin}(X,X) ,\ve{1}](n,k) = S_1(n,k)
  \cdot (-1)^{n-k} \qquad
  [[\int_{\mathbf{FinPerm}} \mathbf{Fin}(X,X) ,\ve{1}]](n,k) = (-1)^{n-k} S_1(n,k) 
\end{equation*}
in $\Q O(1)$ and $\Q$, respectively. 

Similarly, in the $O(1)$-span
\begin{center}
      \begin{tikzpicture}[double distance = 1.5pt]
       \matrix (dia) [matrix of math nodes, column sep=65pt, row
       sep=35pt]{
       \displaystyle{\int_{\mathbf{FinRel}} \mathbf{Fin}(X,X)} &   \Z  \\
       \Z &  BO(1)\\};
       \draw[->] (dia-1-1) -- node[above]{$R$} (dia-1-2);
       \draw[->] (dia-2-1) -- (dia-2-2);
       \draw[->] (dia-1-1) -- node[left]{$L$} (dia-2-1);
       \draw[->] (dia-1-2) --  (dia-2-2);
       \path (dia-2-1) -- node[midway]{$\ve{2}(X,\rho)=+1$}(dia-1-2);
   \end{tikzpicture}
\end{center}
$L(X,\rho,\tau)=|X|$, the cardinality of $X$, and
$R(X,\rho,\tau)=|\rho|$, the number of equivalence classes for the
equivalence relation $\rho$. The associated complex matrix is
\begin{equation*}
  [[\int_{\mathbf{FinRel}} \mathbf{Fin}(X,X), \ve{2}]] = S_2
\end{equation*}
since
the two-sided homotopy fibre
\begin{equation*}
  \dqt k{\int_{\mathbf{FinRel}} \mathbf{Fin}(X,X)}m = \int_{\mathbf{FinRel}_{k,m}} \mathbf{Fin}(X,X)
\end{equation*}
has \Euc\ $S_2(k,m)$ for any two natural numbers, $k$ and $m$.

In the composite $O(1)$-span from $\Z$ to $\Z$ (Definition~\ref{defn:funddefn}.\eqref{funddefn:4a})
\begin{center}
      \begin{tikzpicture}[double distance = 1.5pt]
       \matrix (dia) [matrix of math nodes, column sep=110pt, row
       sep=35pt]{
       \displaystyle{\int_{\mathbf{FinPerm} \times_{\Z}
           \mathbf{FinRel}} \mathbf{Fin}(X_1,X_1) \times \mathbf{Fin}(X_2,X_2)} &   \Z  \\
       \Z &  BO(1)\\};
       \draw[->] (dia-1-1) -- node[above]{$R$} (dia-1-2);
       \draw[->] (dia-2-1) -- (dia-2-2);
       \draw[->] (dia-1-1) -- node[left]{$L$} (dia-2-1);
       \draw[->] (dia-1-2) --  (dia-2-2);
       \path (dia-2-1) -- node[midway]{$(\ve{1} \times_{\Z} \ve{2})(X_1,\sigma,X_2,\rho,\tau_1,\tau_2)=(-1)^{|X_1|-|\sigma|}$}(dia-1-2);
   \end{tikzpicture}
\end{center}
the apex
is the category of elements of the set-valued functor
$(X_1,X_2) \to \mathbf{Fin}(X_1,X_1) \times \mathbf{Fin}(X_2,X_2)$ on the groupoid
$\mathbf{FinPerm} \times_{\Z} \mathbf{FinRel}$ of all
  $(X_1,\sigma,X_2,\rho)$ where $\sigma$ a permutation of the finite set
  $X_1$, $\rho$ is an equivalence relation on the finite set $X_2$ of cardinality
  $|X_2|=|\sigma|$.  Then $|\rho| \leq |X_2| = |\sigma| \leq |X_1|$. The
  functors $L$ and $R$ are $L(X_1,\sigma,X_2,\rho,\tau_1,\tau_2)=|X_1|$
  and $R(X_1,\sigma,X_2,\rho,\tau_1,\tau_2)=|\rho|$. The associated matrix
\begin{multline*}
  [[ \int_{\mathbf{FinPerm} \times_{\Z} \mathbf{FinRel}} \mathbf{Fin}(X_1,X_1)
  \times \mathbf{Fin}(X_2,X_2), \ve{1} \times_{\Z} \ve{2}]] \\
  =
[[\int_{\mathbf{FinPerm}} \mathbf{Fin}(X_1,X_1), \ve{1}]]
[[\int_{\mathbf{FinRel}} \mathbf{Fin}(X_2,X_2), \ve{2}]] = \big( (-1)^{n-k} S_1(n,k) \big)_{n,k}
\big( S_2(k,m) \big)_{k,m} = \big( \delta_{n,m} \big)_{n.m}
\end{multline*}
is the identity matrix by Theorem~\ref{thm:main} and
\cite[Proposition~1.4.1]{stanley97}. 
Alternatively, one
may use that
\begin{multline*}
  \dqt n{\int_{\mathbf{FinPerm} \times_{\Z}
           \mathbf{FinRel}} \mathbf{Fin}(X_1,X_1) \times
         \mathbf{Fin}(X_2,X_2)}m \\ =
       \begin{cases}
         \int_{\mathbf{Fin}_n} \mathbf{Fin}(X_1,X_1) \times 
         \int_{\mathbf{Fin}_n} \mathbf{Fin}(X_2,X_2) & n=m \\
        \coprod_{m \leq k \leq n} \int_{\mathbf{FinPerm}_{n,k}}
        \mathbf{Fin}(X_1,X_1) \times \int_{\mathbf{FinPerm}_{k,m}}
        \mathbf{Fin}(X_2,X_2) & n \neq m
       \end{cases}
\end{multline*}
to arrive at the same conclusion.


 
\def\cprime{$'$} \def\cprime{$'$} \def\cprime{$'$} \def\cprime{$'$}
  \def\cprime{$'$}
\providecommand{\bysame}{\leavevmode\hbox to3em{\hrulefill}\thinspace}
\providecommand{\MR}{\relax\ifhmode\unskip\space\fi MR }
\providecommand{\MRhref}[2]{%
  \href{http://www.ams.org/mathscinet-getitem?mr=#1}{#2}
}
\providecommand{\href}[2]{#2}


\begin{thebibliography}{1}

\bibitem{BHW2010}
John~C. Baez, Alexander~E. Hoffnung, and Christopher~D. Walker, \emph{Higher
  dimensional algebra {VII}: {G}roupoidification}, Theory Appl. Categ.
  \textbf{24} (2010), No. 18, 489--553. \MR{2770073}

\bibitem{imma_kock_tonks2018}
Imma G\'{a}lvez-Carrillo, Joachim Kock, and Andrew Tonks, \emph{Homotopy linear
  algebra}, Proc. Roy. Soc. Edinburgh Sect. A \textbf{148} (2018), no.~2,
  293--325. \MR{3777576}

\bibitem{kock_moller2026}
Joachim Kock and Jesper~M. M{\o}ller, \emph{Signs in objective linear algebra,
  exemplified with exterior powers and determinants}, Available from arXiv.

\bibitem{leinster08}
Tom Leinster, \emph{The {E}uler characteristic of a category}, Doc. Math.
  \textbf{13} (2008), 21--49,
  \href{http://www.math.uiuc.edu/documenta/vol-13/02.pdf}{Doc. Math.}
  \MR{MR2393085}

\bibitem{stanley97}
Richard~P. Stanley, \emph{Enumerative combinatorics. {V}ol. 1}, Cambridge
  Studies in Advanced Mathematics, vol.~49, Cambridge University Press,
  Cambridge, 1997, With a foreword by Gian-Carlo Rota, Corrected reprint of the
  1986 original. \MR{MR1442260 (98a:05001)}

\end{thebibliography}

\end{document}